\newcommand{\bA}{\mathbf{A}}
\newcommand{\bC}{\mathbf{C}}
\newcommand{\cC}{\mathcal{C}}
\newcommand{\cF}{\mathcal{F}}
\newcommand{\rH}{\mathrm{H}}
\newcommand{\cK}{\mathcal{K}}
\newcommand{\fM}{\mathfrak{M}}
\newcommand{\bN}{\mathbf{N}}
\newcommand{\cP}{\mathcal{P}}
\newcommand{\bQ}{\mathbf{Q}}
\newcommand{\cQ}{\mathcal{Q}}
\newcommand{\bS}{\mathbf{S}}
\newcommand{\fS}{\mathfrak{S}}
\newcommand{\rT}{\mathrm{T}}
\newcommand{\bV}{\mathbf{V}}
\newcommand{\cV}{\mathcal{V}}
\newcommand{\bW}{\mathbf{W}}
\newcommand{\cW}{\mathcal{W}}
\newcommand{\fW}{\mathfrak{W}}
\newcommand{\bZ}{\mathbf{Z}}
\newcommand{\fa}{\mathfrak{a}}
\newcommand{\fh}{\mathfrak{h}}
\newcommand{\bk}{\mathbf{k}}
\newcommand{\rn}{\mathrm{n}}
\newcommand{\ru}{\mathrm{u}}
\let\lbb\llbracket
\let\rbb\rrbracket
\newcommand{\arxiv}[1]{\href{http://arxiv.org/abs/#1}{{\tiny\tt arXiv:#1}}}
\newcommand{\DOI}[1]{\href{http://doi.org/#1}{\color{purple}{\tiny\tt DOI:#1}}}
\numberwithin{equation}{section}
\newtheorem{theorem}[equation]{Theorem}
\newtheorem{proposition}[equation]{Proposition}
\newtheorem{lemma}[equation]{Lemma}
\newtheorem{corollary}[equation]{Corollary}
\theoremstyle{definition}
\newtheorem{rmk}[equation]{Remark}
\newenvironment{remark}[1][]{\begin{rmk}[#1] \pushQED{\qed}}{\popQED \end{rmk}}
\newtheorem{eg}[equation]{Example}
\newenvironment{example}[1][]{\begin{eg}[#1] \pushQED{\qed}}{\popQED \end{eg}}
\newtheorem{defnux}[equation]{Definition}
\newenvironment{definition}[1][]{\begin{defnux}[#1]\pushQED{\qed}}{\popQED \end{defnux}}
\newcommand{\defn}[1]{\emph{#1}}
\renewcommand{\phi}{\varphi}
\renewcommand{\emptyset}{\varnothing}
\newcommand{\lw}{{\textstyle \bigwedge}}
\DeclareMathOperator{\im}{im} 
\DeclareMathOperator{\coker}{coker}
\DeclareMathOperator{\End}{End}
\DeclareMathOperator{\Sym}{Sym}
\DeclareMathOperator{\Hom}{Hom}
\newcommand{\id}{\mathrm{id}}
\newcommand{\op}{\mathrm{op}}
\newcommand{\pol}{\mathrm{pol}}
\renewcommand{\Vec}{\mathrm{Vec}}
\newcommand{\fgl}{\mathfrak{gl}}
\newcommand{\FS}{\mathbf{FS}}
\newcommand{\FB}{\mathbf{FB}}
\DeclareMathOperator{\Com}{Com}
\newcommand{\NCFin}{\mathbf{NCFin}}
\newcommand{\Pol}{\mathbf{Pol}}
\DeclareMathOperator{\As}{As}
\DeclareMathOperator{\Alg}{Alg}
\newcommand{\Fin}{\mathbf{Fin}}
\DeclareMathOperator{\Rep}{Rep}
\newcommand{\inv}{^{-1}}
\newcommand{\del}{\partial}
\newcommand{\aut}{\mathfrak{aut}}
\DeclareMathOperator{\ch}{ch}
\newcommand{\pos}{+}
\title{Polynomial representations of the Witt Lie algebra}
\author{Steven V Sam}
\address{Department of Mathematics, University of California, San Diego, CA, USA}
\email{\href{mailto:ssam@ucsd.edu}{ssam@ucsd.edu}}
\urladdr{\url{http://math.ucsd.edu/~ssam/}}
\thanks{SS was supported by NSF grant DMS-1849173.}
\author{Andrew Snowden}
\address{Department of Mathematics, University of Michigan, Ann Arbor, MI, USA}
\email{\href{mailto:asnowden@umich.edu}{asnowden@umich.edu}}
\urladdr{\url{http://www-personal.umich.edu/~asnowden/}}
\thanks{AS was supported by NSF grants DMS-1303082 and DMS-1453893 and a Sloan Fellowship.}
\author{Philip Tosteson}
\address{Department of Mathematics, University of Chicago, Chicago, IL, USA}
\curraddr{Department of Mathematics, University of North Carolina, Chapel Hill, NC, USA}
\email{\href{mailto:ptoste@unc.edu}{ptoste@unc.edu}}
\urladdr{\url{https://ptoste.github.io/}}
\thanks{PT was supported by NSF grant DMS-1903040.}
\date{March 18, 2024}
\begin{document}

\begin{abstract}
The \defn{Witt algebra} $\fW_n$ is the Lie algebra of all derivations of the $n$-variable polynomial ring $\bV_n=\bC[x_1, \ldots, x_n]$ (or of algebraic vector fields on $\bA^n$). A representation of $\fW_n$ is \defn{polynomial} if it arises as a subquotient of a sum of tensor powers of $\bV_n$.  Our main theorems assert that finitely generated polynomial representations of $\fW_n$ are noetherian and have rational Hilbert series.  A key intermediate result  states polynomial representations of the infinite Witt algebra are equivalent to representations of $\Fin^{\op}$, where $\Fin$ is the category of finite sets.  We also show that polynomial representations of $\fW_n$ are equivalent to polynomial representations of the endomorphism monoid of $\bA^n$.  These equivalences are a special case of an operadic version of Schur--Weyl duality,  which we establish.  \end{abstract}

\newpage

\maketitle
\tableofcontents

\section{Introduction}

The Witt algebra $\fW_n$ is an important infinite dimensional Lie algebra in mathematics and physics. The purpose of this paper is to investigate a class of representations of $\fW_n$ called the \emph{polynomial representations}, which are related to representations of the monoid of algebraic endomorphisms of $\bA^n$.  From the point of view of classical representation theory, these representations are a bit unusual: the main representations of interest are finitely generated but of infinite length, while the structure of irreducible representations is not so complicated. Thus this theory is more like the module theory of a commutative ring than a typical representation theory. Accordingly, our main results mirror basic results from commutative algebra: we show that finitely generated polynomial representations of $\fW_n$ are noetherian and have rational Hilbert series. In the rest of the introduction, we explain our results and motivation in more detail.

\subsection{Definitions}

Fix a field $\bk$ of characteristic~0. The \defn{Witt algebra} $\fW_n$ is the Lie algebra of derivations of the polynomial ring $\bV_n=\bk[x_1, \ldots, x_n]$. Derivations of the form $f \frac{\partial}{\partial x_i}$, with $f \in \bV_n$, span $\fW_n$ as a vector space. The algebra $\fW_n$ is graded, with $f \frac{\partial}{\partial x_i}$ being homogeneous of degree $d-1$ if $f$ is homogeneous of degree $d$. The grading is supported in degrees $\ge -1$. The degree~0 piece of $\fW_n$ is a Lie subalgebra isomorphic to $\fgl_n$. We let $\fW^{\pos}_n$ be the sum of the non-negative pieces of $\fW_n$, which is also a Lie subalgebra.

The vector space $\bV_n$ carries a tautological representation of $\fW_n$, and we refer to $\bV_n$ as the \defn{standard representation} of $\fW_n$. We say that a representation of $\fW_n$ is \defn{polynomial} if it can be realized as a subquotient of a (possibly infinite) direct sum of tensor powers of the standard representation. We let $\Rep^{\pol}(\fW_n)$ denote the category of polynomial representations. It is a Grothendieck abelian category that is closed under tensor products. We emphasize that a finitely generated polynomial representation of $\fW_n$ need not be finite dimensional, or even of finite length (see Example~\ref{ex:not-fin-len}).

\begin{example}
The algebra $\fW_n$ naturally acts on the module of K\"ahler differentials of $\bV_n$, and this is a polynomial representation. See \S \ref{ss:kahler} for more details.
\end{example}

\begin{example} \label{ex:not-fin-len}
Let $\fa_n$ be the ideal of $\bV_1^{\otimes 2}=\bk[x,y]$ generated by $(x-y)^n$. Then the $\fa_n$ form a strictly descending chain of $\fW_1$-submodules. This shows that $\bV_1^{\otimes 2}$ does not have finite length. However, one can show that $\bV_1^{\otimes 2}$ is finitely generated as a $\fW_1$-module.
\end{example}

\subsection{Main results}

We now state our two main theorems. The first is an analog of the Hilbert basis theorem:

\begin{theorem} \label{mainthm1}
If $M$ is a finitely generated polynomial representation of $\fW_n$ then any subrepresentation of $M$ is also finitely generated.
\end{theorem}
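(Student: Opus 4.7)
The plan is to deduce the noetherianity theorem from the combinatorial equivalences announced in the abstract. First, a standard reduction: it suffices to prove that each tensor power $\bV_n^{\otimes d}$ is a noetherian object of $\Rep^{\pol}(\fW_n)$. Indeed, a finitely generated $M$ receives a surjection $P \twoheadrightarrow M$ from a finite direct sum $P = \bigoplus_{i=1}^r \bV_n^{\otimes d_i}$, and any subrepresentation of $M$ is the image of its preimage in $P$; a finite generating set for the preimage then descends to one for its image.

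Second, I would invoke the two equivalences to transport the problem into a more tractable setting. The equivalence $\Rep^{\pol}(\fW_n) \simeq \Rep^{\pol}(\End(\bA^n))$ converts subrepresentations of $\bV_n^{\otimes d}$ into $\End(\bA^n)$-stable subspaces of the corresponding standard object of the endomorphism-monoid category, while the equivalence $\Rep^{\pol}(\fW_\infty) \simeq \Rep(\Fin^{\op})$ puts the analogous infinite-variable question into a purely combinatorial setting. The bridge between finite and infinite $n$ should be a specialization functor $\Rep^{\pol}(\fW_\infty) \to \Rep^{\pol}(\fW_n)$ induced by setting $x_{n+1} = x_{n+2} = \cdots = 0$ (under which $\bV_\infty^{\otimes d}$ surjects onto $\bV_n^{\otimes d}$); provided this functor is exact and preserves finite generation, noetherianity descends from $\fW_\infty$-representations to $\fW_n$-representations.

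Third, it remains to prove that finitely generated $\Fin^{\op}$-modules are noetherian. Following the now-standard template for combinatorial categories of this kind (e.g.\ the analogous result for $\FI$-modules), I would fix an ordering on finite sets together with a compatible ordering on their elements, and run a Gr\"obner/Higman-style well-partial-order argument on leading terms of generators to show that initial submodules stabilize. The principal obstacle I anticipate is in this final step: morphisms in $\Fin$ are arbitrary functions rather than injections, so collapsing maps appear in the category, and their combinatorics must be wrangled carefully in order for the well-quasi-ordering argument to close. Once this combinatorial noetherianity is secured, the specialization bridge from paragraph two delivers Theorem~\ref{mainthm1}.
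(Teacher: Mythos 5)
Your high-level strategy is the same as the paper's: pass to the infinite Witt algebra, use the equivalence $\Rep^{\pol}(\fW)\simeq \Rep(\Fin^{\op})$, establish noetherianity there, and descend to $\fW_n$ through a specialization functor. Two of your three steps, however, are not quite right as stated.

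The serious gap is in your descent step. You assert that noetherianity descends from $\Rep^{\pol}(\fW)$ to $\Rep^{\pol}(\fW_n)$ ``provided this functor is exact and preserves finite generation.'' Those two conditions are not sufficient. The problem is lifting subobjects: given a finitely generated $M\in\Rep^{\pol}(\fW_n)$, you may present it as $\Gamma_n(M')$ for a finitely generated $M'\in\Rep^{\pol}(\fW)$, but to bound a submodule $N\subseteq M$ you need a submodule $N'\subseteq M'$ with $\Gamma_n(N')=N$, and exactness of $\Gamma_n$ does not furnish one. What the paper actually proves (Proposition~\ref{prop:serre}) is that $\Gamma_n$ exhibits $\Rep^{\pol}(\fW_n)$ as the Serre quotient of $\Rep^{\pol}(\fW)$ by the localizing subcategory $\cK_n$ of modules built from $\bS_\lambda(\bW)$ with $\ell(\lambda)>n$. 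This is established by constructing a right adjoint $\Sigma_n$ to $\Gamma_n$ and showing the co-unit $\Gamma_n\Sigma_n\to\id$ is an isomorphism; the section $\Sigma_n$ is precisely what lets you lift submodules. Once the Serre quotient structure is in hand, noetherianity descends by Gabriel's theorem that a quotient of a locally noetherian category by a localizing subcategory is locally noetherian. Without that structure your paragraph two does not close.

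A smaller point: your step three proposes to re-prove, via a Gr\"obner/Higman-style well-partial-order argument, that finitely generated $\Fin^{\op}$-modules are noetherian, and you flag the collapsing maps of $\Fin$ as the main obstacle. This is already a theorem of Sam--Snowden, and the paper simply cites it (\cite[Corollary~8.4.2]{catgb}); your proposed strategy is in fact the one used there, so your instinct is sound, but you should not treat it as an open sub-problem. Finally, your first reduction (to showing $\bV_n^{\otimes d}$ is noetherian) is harmless but unnecessary once one works with the locally noetherian property at the level of the whole category, which is how the paper proceeds.
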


In other words, this theorem states that the category $\Rep^{\pol}(\fW_n)$ is locally noetherian. In contrast, Sierra--Walton \cite{sw1} have shown that the category of \emph{all} $\fW_n$-modules is not locally noetherian.

To state our second theorem, we must first make a definition. Let $M$ be a finitely generated polynomial $\fW_n$-module. Via the inclusion $\fgl_n \subset \fW_n$, we can regard $M$ as a representation of $\fgl_n$. As such, it is a polynomial representation; this follows since $\bV_n$ is a polynomial representation of $\fgl_n$. It therefore carries a natural grading (induced by the action of the center of $\fgl_n$) supported in non-negative degrees. We define the \defn{Hilbert series} of $M$ to be the formal series
\begin{displaymath}
\rH_M(t) = \sum_{n \ge 0} \dim{M_n} \cdot t^n.
\end{displaymath}
It is not difficult to see that $M_n$ is in fact finite dimensional when $M$ is finitely generated. Our second result concerns this invariant:

\begin{theorem} \label{mainthm2}
If $M$ is a finitely generated polynomial representation of $\fW_n$ then $\rH_M(t)$ is a rational function of $t$; moreover, its denominator is a product of factors of the form $1-t^m$.
\end{theorem}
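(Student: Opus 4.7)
The plan is to exploit the Schur--Weyl-type equivalence highlighted in the abstract, which identifies $\Rep^{\pol}(\fW_n)$ with the category of polynomial representations of the endomorphism monoid $\End(\bA^n)$, and to reduce the Hilbert series computation to that more structured, operadic setting. This reinterprets the problem in a language closer to commutative algebra, where rationality of Hilbert series is a classical phenomenon.

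First I would identify the natural generating (``free projective'') objects of $\Rep^{\pol}(\fW_n)$, namely the shifts of tensor powers $\bV_n^{\otimes d}$. Since $\bV_n^{\otimes d}$ is, as a graded vector space, a polynomial ring in $nd$ variables, one has
\[
\rH_{\bV_n^{\otimes d}}(t) = \frac{1}{(1-t)^{nd}},
\]
which is already of the required form; grading shifts only multiply by monomials $t^s$, preserving that form. Any finitely generated polynomial representation $M$ is a quotient of a finite direct sum of such objects, giving an exact sequence $0 \to K \to F_0 \to M \to 0$; by Theorem~\ref{mainthm1}, the kernel $K$ is again finitely generated, so one can iterate to build a free resolution $\cdots \to F_1 \to F_0 \to M \to 0$ in which each $F_i$ is a finite direct sum of shifted tensor powers.

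If this resolution terminates, then an alternating sum $\rH_M(t) = \sum_i (-1)^i \rH_{F_i}(t)$ immediately yields rationality with denominator a product of $1 - t^m$ factors. The main obstacle is therefore to show that $M$ has \emph{finite} projective dimension in $\Rep^{\pol}(\fW_n)$, or else to bypass this hypothesis. Since finitely generated polynomial representations can fail to be of finite length (Example~\ref{ex:not-fin-len}), and the category is not semisimple, this is not automatic.

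To circumvent this, I would use the equivalence with polynomial $\End(\bA^n)$-modules (or the $\Fin^{\op}$-description for the infinite Witt algebra, combined with a restriction functor to the finite-$n$ case): on that side one should identify an explicit finite family of ``standard'' objects, indexed by a combinatorial datum such as partitions $\lambda$, each carrying an explicit rational Hilbert series with denominator of the desired form, and show that every finitely generated object admits a \emph{finite} filtration whose associated graded pieces are (shifts of) such standards. Finiteness of the filtration should follow from the noetherianity of Theorem~\ref{mainthm1} applied to the standard filtration, and the rationality of the individual standards' Hilbert series would be a direct combinatorial calculation analogous to those in twisted commutative algebra. Summing up then presents $\rH_M(t)$ as a finite $\bZ$-linear combination of rational functions of the asserted shape, completing the proof.
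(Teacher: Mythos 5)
There is a genuine gap, and the core difficulty the student identifies -- lack of finite projective dimension -- is precisely where the proposal stops being a proof. The paper does not resolve this by finding finite projective resolutions or finite filtrations; it takes an entirely character-theoretic route.

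Concretely: the first half of the proposal (building a free resolution $\cdots \to F_1 \to F_0 \to M \to 0$ from shifted copies of $\bS_\lambda(\bV_n)$ via Theorem~\ref{mainthm1}) is fine but only yields an infinite resolution, whose alternating sum of Hilbert series is not a well-defined rational function. Finitely generated polynomial $\fW_n$-modules do \emph{not} have finite projective dimension in general (already for $\FI$-modules, which are simpler, torsion modules have infinite projective dimension, and the situation for $\Fin^{\op}$/$\FS^{\op}$ and their Serre quotients is no better), so the ``if this resolution terminates'' clause cannot be satisfied. The fallback proposal -- a finite filtration whose graded pieces are shifts of explicit ``standard'' objects -- is not substantiated, and I know of no such structural result: the existence of finite filtrations by induced/standard objects is a nontrivial theorem even for $\FI$-modules, where it relies on special features (the shift functor and finite regularity), and nothing analogous is established or used for $\FS^{\op}$-modules or polynomial $\fW_n$-modules. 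Noetherianity alone (Theorem~\ref{mainthm1}) does not give such a filtration; it only lets you iterate the (infinite) resolution.

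What the paper actually does: it lifts $M$ along the specialization functor to a finitely generated polynomial $\fW^{\pos}$-module, identifies this with a finitely generated $\FS^{\op}$-module via Theorem~\ref{mainthm3}/\ref{mainthm4}, and then invokes Tosteson's structural theorem \cite[Theorems~1.2, 11.1]{To}, which places the formal character of a finitely generated $\FS^{\op}$-module inside a finite sum of finite-dimensional spaces $\cV_{A,r}\cap\cF_{\leq k}$ of symmetric functions. The rationality of the specialization $\pi_n(\ch(M))$ is then extracted by an explicit plethystic/power-sum computation in the ring $R_{r,k,A}$ (Theorem~\ref{componentwiserationality}), not by any resolution or filtration of $M$. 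This character-level bound from \cite{To} is the essential input that the proposal is missing, and it is not a consequence of noetherianity together with formal bookkeeping on projectives. You would need to either (i) prove an analogue of the $\cV_{A,r}\cap\cF_{\leq k}$ containment directly, or (ii) establish a finite-filtration structure theorem that is currently not available, before the Hilbert-series bookkeeping in your outline could close.
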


Equivalently, the theorem asserts that the function $n \mapsto \dim{M_n}$ is a quasi-polynomial of $n$, at least when $n$ is sufficiently large. We also prove a more precise result about how the character of $M_n$ (as a $\fgl_n$-module) varies with $n$. We note that the above results apply equally well to $\fW_n^{\pos}$.

\subsection{Overview of proof}

To prove our results, we pass to the infinite Witt algebra $\fW=\bigcup_{n \ge 1} \fW_n$. This acts on $\bV=\bigcup_{n \ge 1} \bV_n$ (the infinite variable polynomial ring), which leads to a similar notion of polynomial representation. We show that there is a \defn{specialization functor}
\begin{displaymath}
\Gamma_n \colon \Rep^{\pol}(\fW) \to \Rep^{\pol}(\fW_n),
\end{displaymath}
which allows us to transfer information from $\fW$ to $\fW_n$. In fact, $\Gamma_n$ is a Serre quotient; see Proposition~\ref{prop:serre}.

The following is the key theorem that gives us a handle on $\fW$:

\begin{theorem} \label{mainthm3}
The category $\Rep^{\pol}(\fW)$ is equivalent to the category of $\Fin^{\op}$-modules.
\end{theorem}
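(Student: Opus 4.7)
The plan is to realize the tensor powers $\bV^{\otimes n}$ as compact projective generators of $\Rep^{\pol}(\fW)$ whose endomorphism category is the $\bk$-linearization of $\Fin$; the cocompletion will then recover $\Fun(\Fin^{\op}, \Vec)$. First, one upgrades $n \mapsto \bV^{\otimes n}$ to a functor $\bV^{\otimes -}\colon \Fin \to \Rep^{\pol}(\fW)$: writing $\bV = \Sym(\bU)$ with $\bU$ spanned by $\{x_i\}_{i \ge 1}$, so that $\bV^{\otimes S} = \Sym(\bU \otimes \bk^S)$, any set map $f \colon S \to T$ induces a linear map $\bk^S \to \bk^T$ and hence a ring homomorphism $\bV^{\otimes S} \to \bV^{\otimes T}$ that commutes with the Witt action. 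One then defines
\begin{displaymath}
\Phi \colon \Rep^{\pol}(\fW) \to \Fun(\Fin^{\op}, \Vec), \qquad \Phi(M)(S) = \Hom_{\fW}(\bV^{\otimes S}, M),
\end{displaymath}
together with a putative left adjoint $\Psi(F) = \int^{S \in \Fin} F(S) \otimes_\bk \bV^{\otimes S}$ given by Kan extension.

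The heart of the argument is the identity
\begin{displaymath}
\Hom_\fW(\bV^{\otimes S}, \bV^{\otimes T}) = \bk[\Hom_\Fin(S, T)],
\end{displaymath}
which says $\Phi(\bV^{\otimes T})$ is the representable $\Fin^{\op}$-module at $T$. The inclusion $\supseteq$ is given by the functoriality above. For $\subseteq$, I would exploit the multigrading on $\bV^{\otimes T}$ coming from the Cartan subalgebra of $\fgl_\infty \subset \fW$: a $\fW$-equivariant map must respect these multi-weights factor by factor. Since $1^{\otimes T}$ is (up to scalar) the unique vector in $\bV^{\otimes T}$ annihilated by every $\partial_i$, a bookkeeping argument combining the Leibniz rule, the $\fgl_\infty$-action, and the lowering operators $\partial_i$ should show that any $\fW$-linear map is a $\bk$-linear combination of pullbacks along set maps $S \to T$.

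With the Hom computation in hand, the equivalence follows from three ingredients: (a) the tensor powers generate $\Rep^{\pol}(\fW)$ under colimits, which is essentially the definition of a polynomial representation; (b) each $\bV^{\otimes n}$ is compact; and (c) each $\bV^{\otimes n}$ is projective in $\Rep^{\pol}(\fW)$. Given these, the unit and counit of the adjunction $\Psi \dashv \Phi$ are isomorphisms on representables by the Hom computation, and extend by cocontinuity to all objects, so $\Phi$ is an equivalence.

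The principal obstacle is twofold: first, pinning down all $\fW$-equivariant maps between tensor powers --- it is exactly here that $\Fin$ rather than $\FB$ enters, because the morphisms of $\Fin$ beyond bijections encode precisely the constraints imposed by the lowering operators $\partial_i \in \fW_{-1}$, which are absent from the classical Schur--Weyl description of polynomial $\GL_\infty$-representations --- and second, establishing projectivity of $\bV^{\otimes n}$, which is not automatic because $\Rep^{\pol}(\fW)$ is highly non-semisimple. The projectivity will likely require an explicit construction, perhaps via an adjunction comparing $\fW$-modules with $U(\fW^{\pos})$-modules, or via the operadic Schur--Weyl machinery advertised in the abstract.
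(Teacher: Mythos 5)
Your skeleton is the right one and it is the same as the paper's: exhibit the tensor powers $\bV^{\otimes n}$ as compact projective generators of $\Rep^{\pol}(\fW)$, compute that their full subcategory is $\bk[\Fin]$, and conclude by Morita theory. You also correctly locate the two hard points — the Hom computation $\Hom_\fW(\bV^{\otimes n}, \bV^{\otimes m}) \cong \bk[\Hom_\Fin([n],[m])]$ and projectivity of $\bV^{\otimes n}$. But your proposed routes through both of them are vague where the actual argument is sharp, and they miss the single lemma that makes everything go.

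The missing idea is to factor all Hom spaces through $\fgl$-equivariant maps out of $\bW^{\otimes n}$, where $\bW = \bigcup_n \bk^n \subset \bV$ is the degree-one part. Concretely, the paper first shows that $\bW^{\otimes n}$ generates $\bV^{\otimes n}$ as a $\fW$-module (this is the one place your ``lowering/raising and bookkeeping'' style of argument is actually used, and only once: given a weight vector $p_1 \otimes \cdots \otimes p_n$, pick fresh indices $a(1),\dots,a(n)$ and apply $(p_1\del_{x_{a(1)}})\cdots(p_n\del_{x_{a(n)}})$ to $x_{a(1)}\otimes\cdots\otimes x_{a(n)}$). From this, the restriction map $\Hom_\fW(\bV^{\otimes n}, M) \to \Hom_\fgl(\bW^{\otimes n}, M)$ is injective for any $\fW$-module $M$. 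Now two things fall out at once. First, take $M = \bV^{\otimes m}$: classical Schur--Weyl computes $\Hom_\fgl(\bW^{\otimes n}, \bV^{\otimes m}) = (\Com^{\otimes m})([n]) = \bk[\Fin([n],[m])]$ (using the $\FB$-tensor structure and $\bV^{\otimes m} = \Com^{\otimes m}\{\bW\}$), and the injective restriction map is matched by the functoriality map you already have, so equality holds — no further bookkeeping with $\partial_i$ is needed. Second, for a general polynomial $M = A/B$ with $B \subset A \subset \bigoplus_i \bV^{\otimes m_i}$, any $\fgl$-map $\bW^{\otimes n} \to M$ lifts to $A$ (semisimplicity of $\Rep^{\pol}(\fgl)$), then extends uniquely to a $\fW$-map $\bV^{\otimes n} \to \bigoplus_i \bV^{\otimes m_i}$ (by the Hom computation just established), which lands in $A$ because $\bW^{\otimes n}$ generates. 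So restriction is also surjective, giving $\Hom_\fW(\bV^{\otimes n}, M) \cong \Hom_\fgl(\bW^{\otimes n}, M)$ for every polynomial $M$; the right side is an exact functor of $M$, hence $\bV^{\otimes n}$ is projective. This is the ``operadic Schur--Weyl'' you guessed at, and it is much cleaner than either a direct multi-weight analysis of maps $\bV^{\otimes S} \to \bV^{\otimes T}$ or a detour through $U(\fW^\pos)$-modules: both of your proposed obstacles are resolved by the same comparison-to-$\fgl$ isomorphism.

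One smaller remark: your description of $1^{\otimes T}$ as the unique vector killed by every $\partial_i$ is true but not the relevant object — the argument pivots on $\bW^{\otimes n}$ (the multidegree $(1,\dots,1)$ subspace) as the generating $\fgl$-submodule, not on the lowest-degree vector $1^{\otimes T}$. Also, to use Morita theory as in the paper you should verify compact generation of $\Rep^{\pol}(\fW)$ by the $\bV^{\otimes n}$; this is immediate from the definition of polynomial representation plus projectivity, but worth stating.
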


Here $\Fin$ is the category of finite sets, and a $\Fin^{\op}$-module is a functor $\Fin^{\op} \to \Vec$, where $\Vec$ is the category of vector spaces. To prove Theorem~\ref{mainthm3}, we show that the representations $\bV^{\otimes n}$ of $\fW$ are projective (in the category of polynomial representations), and that this category of projectives is equivalent to the category of principal projective $\Fin^{\op}$-modules.

Using Theorem~\ref{mainthm3} we can transfer known results about $\Fin^{\op}$-modules to $\fW$. In particular, a theorem of Sam--Snowden \cite{catgb} states that the category of $\Fin^{\op}$-modules is locally noetherian. We thus see that $\Rep^{\pol}(\fW)$ is locally noetherian, and (via properties of specialization) this yields Theorem~\ref{mainthm1}. Similarly, results of Tosteson \cite{To} give control on the Hilbert series of $\Fin^{\op}$-modules, and this allows us to deduce Theorem~\ref{mainthm2}.

\begin{remark} \label{rmk:othercat}
In fact, there are two additional categories that are equivalent to the two categories in Theorem~\ref{mainthm3}. The first is the category of polynomial representations of the monoid $\fM$ of all algebra endomorphisms of $\bV$,  equivalently  the monoid of endomorphisms of $\bA^\infty$. (This monoid plays the role of a ``Lie group'' for $\fW$). The second is the category of polynomial $\cC$-modules, where $\cC$ is the category of finitely generated polynomial rings. See \S \ref{ss:setup} for precise definitions.
\end{remark}

\begin{remark} \label{rmk:mainthm3}
There is a version of Theorem~\ref{mainthm3} for $\fW^{\pos}$, but here $\Fin$ is replaced by $\FS$, the category of finite sets and surjective functions.
\end{remark}

\begin{remark} \label{rmk:curried}
Theorem~\ref{mainthm3} is studied in \cite{brauercat2} from the perspective of ``curried algebra.''\footnote{We note that while the preprint \cite{brauercat2} appeared publicly before this paper, the discussion of the Witt algebra in \cite{brauercat2} was inspired by this project.}
\end{remark}

\subsection{An operadic generalization}

Theorem~\ref{mainthm3} is in fact a special case of a much more general theorem. Let $P$ be a linear operad (see \S \ref{ss:operad}). Let $\bV$ be the free $P$-algebra on a countable set of generators, and let $\fh$ be the Lie algebra of derivations of $\bV$. We obtain a notion of polynomial representation for $\fh$ from its tautological action on $\bV$. Let $\cW$ be the wiring category (or PROP) associated to $P$ (see \S \ref{ss:wiring}); this is a $\bk$-linear category that is typically of a combinatorial flavor. We show the following:

\begin{theorem} \label{mainthm4}
The category $\Rep^{\pol}(\fh)$ is equivalent to the category of $\cW^{\op}$-modules.
\end{theorem}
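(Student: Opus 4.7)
My plan is to proceed in parallel with the proof of Theorem~\ref{mainthm3}, exhibiting the tensor powers $\bV^{\otimes n}$ as a family of projective generators in $\Rep^{\pol}(\fh)$ whose enriched hom category recovers $\cW$. By the standard Morita-type recognition criterion for Grothendieck abelian categories, this will produce the desired equivalence $\Mod_{\cW^{\op}} \simeq \Rep^{\pol}(\fh)$.

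First I would construct the candidate functor. Each morphism $m \to n$ in the wiring category $\cW$ is built from an $n$-tuple of $m$-ary operations in $P$ (together with their wiring to the $m$ inputs), and so determines a linear map $\bV^{\otimes m} \to \bV^{\otimes n}$ via the $P$-algebra structure on $\bV$. These are $\fh$-equivariant because derivations in $\fh = \Der(\bV)$ commute with all operations of $P$ by the Leibniz rule. Extending by colimits yields an exact, cocontinuous functor $F \colon \Mod_{\cW^{\op}} \to \Rep^{\pol}(\fh)$ sending the representable $\cW^{\op}$-module $\Hom_\cW(-, n)$ to $\bV^{\otimes n}$.

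Next I would verify that the $\bV^{\otimes n}$ form a system of projective generators in $\Rep^{\pol}(\fh)$. Generation is essentially tautological from the definition of polynomial representation. Projectivity should reduce to the universal property of $\bV$ as the free $P$-algebra on countably many generators: a $\fh$-equivariant map $\bV^{\otimes n} \to M$ is determined by the images of $n$ generic coordinates, and constant vector fields in $\fh$ (which translate among the generators) allow one to lift any such choice through a surjection $M \onto N$ of polynomial representations, producing the required splitting. This step should closely follow the argument already used for the commutative operad in Theorem~\ref{mainthm3}.

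The main obstacle I anticipate is the Hom computation $\Hom_{\fh}(\bV^{\otimes m}, \bV^{\otimes n}) = \Hom_\cW(m, n)$. One inclusion is immediate from the construction of $F$. The reverse is the content of the ``operadic Schur--Weyl duality'' advertised in the introduction: every $\fh$-equivariant map must arise from a wiring diagram in $\cW$. My plan would be to exploit two natural sub--Lie algebras of $\fh$: the constant vector fields, which by translation invariance reduce the problem to identifying the map on tensors of distinct coordinate variables; and the linear vector fields $\fgl_\infty \subset \fh$, whose equivariance forces each output polynomial to be obtained by iterated application of operations of $P$ (since the $P$-algebra operations are precisely the multilinear $\fgl$-equivariant operations on the free $P$-algebra). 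Combining these two sets of constraints should pin the map down as precisely an element of $\Hom_\cW(m, n)$. Once this identification is in place, the proof concludes by the Morita argument exactly as in Theorem~\ref{mainthm3}.
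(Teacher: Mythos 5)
Your overall strategy—exhibit the $\bV^{\otimes n}$ as compact projective generators of $\Rep^{\pol}(\fh)$, identify their full subcategory with $\cW$, and conclude by Morita theory—is exactly the one the paper uses. However, the mechanism you propose for the two crucial middle steps does not work for a general operad. You appeal to ``constant vector fields in $\fh$ which translate among the generators,'' but for $P = \Com^{\rm nu}$ (and more generally whenever $P([0]) = 0$) the Lie algebra $\fh$ is concentrated in non-negative degrees and there are no constant vector fields at all; even for $P = \Com$, the derivations $\partial_{x_i}$ lower degree and so cannot be used to show that the low-degree subspace $\bW^{\otimes n}$ generates $\bV^{\otimes n}$, which is the statement you actually need. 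The paper's argument (Proposition~\ref{prop:generators}) instead applies a product of \emph{arbitrary} derivations $p_1 \del_{x_{a(1)}} \cdots p_n \del_{x_{a(n)}}$ where the indices $a(1), \ldots, a(n)$ are chosen fresh (disjoint from the supports of the $p_i$'s), so that the operator evaluates cleanly on the pure tensor $x_{a(1)} \otimes \cdots \otimes x_{a(n)} \in \bW^{\otimes n}$. Relatedly, your claim that ``generation is essentially tautological'' is not quite right: a polynomial representation is defined to be a subquotient of a sum of tensor powers, and promoting ``subquotient'' to ``quotient'' requires the projectivity result (Proposition~\ref{prop:gproj}), which in turn rests on the generation lemma and on semisimplicity of $\Rep^{\pol}(\fgl)$—not on translation invariance.

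For the Hom computation $\Hom_{\fh}(\bV^{\otimes n}, \bV^{\otimes m}) \cong \Hom_\cW([n],[m])$, your two-constraint plan (constant vector fields plus $\fgl$-equivariance) is again vulnerable to the absence of constant vector fields, and in any case the paper takes a cleaner route: it proves that restriction gives $\Hom_{\fh}(\bV^{\otimes n}, \bV^{\otimes m}) \hookrightarrow \Hom_{\fgl}(\bW^{\otimes n}, \bV^{\otimes m})$ (injectivity from the generation lemma), then computes the right-hand side directly using the standard $\FB$-module identity $\Hom_{\fgl}(\bW^{\otimes n}, M\{\bW\}) = M([n])$ applied to $M = P^{\otimes m}$, which immediately yields $\bigoplus_{[n] = a_1 \sqcup \cdots \sqcup a_m} \bigotimes_i P(a_i) = \Hom_\cW([n],[m])$. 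This sidesteps any need to characterize $\fgl$-equivariant multilinear operations from first principles. If you replace the translation-invariance argument with the fresh-index trick and the Schur--Weyl step with the $\FB$-module identity, your outline becomes essentially the paper's proof.
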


Here are a few special cases of this theorem:
\begin{itemize}
\item Suppose $P$ is the trivial operad (see \S \ref{sss:trivial}). Then $\fh=\fgl$ is the infinite general linear Lie algebra and $\cW=\bk[\FB]$ is (the linearization of) the category of finite sets and bijections. A $\cW^{\op}$-module is a linear species, i.e., a sequence of representations of symmetric groups. Theorem~\ref{mainthm4} in this case reduces to classical Schur--Weyl duality.
\item Suppose $P=\Com$ is the unital commutative operad (see \S \ref{sss:com}). Then $\fh=\fW$ and $\cW=\bk[\Fin]$, and Theorem~\ref{mainthm4} is exactly Theorem~\ref{mainthm3}.
\item Suppose $P=\Com^{\rn\ru}$ is the non-unital commutative operad (see \S \ref{sss:comnu}). Then $\fh=\fW^{\pos}$ and $\cW=\bk[\FS]$, and Theorem~\ref{mainthm4} is the analog of Theorem~\ref{mainthm3} mentioned in Remark~\ref{rmk:mainthm3}.
\end{itemize}
The proof of Theorem~\ref{mainthm4} follows the outline given for the proof of Theorem~\ref{mainthm3}, i.e., we match up categories of projective objects. Remark~\ref{rmk:othercat} also applies in the setting of a general operad; see \S \ref{ss:setup} for details.

\subsection{Motivation}

Recall that $\FS$ is the category of finite sets and surjections and an $\FS^{\op}$-module is a functor $\FS^{\op} \to \Vec$. These have seen several recent applications.
\begin{itemize}
\item Snowden \cite{delta-mod} proved a finiteness result for syzygies of the Segre embedding using certain algebraic objects called \defn{$\Delta$-modules}, which are closely related to $\FS^{\op}$-modules; in fact, \cite{catgb} proved stronger results on $\Delta$-modules by leveraging its new results on $\FS^{\op}$-modules. Draisma--Kuttler \cite[\S 7]{DraismaKutler} also discuss the \defn{substitution monoid} in this context, which is again closely related to $\FS^{\op}$.
\item Sam--Snowden \cite{catgb} proved the Lannes--Schwartz artinian conjecture using $\FS^{\op}$-modules. (We note this conjecture was also proved by Putman--Sam \cite{PutmanSam} around the same time with slightly different methods.)
\item Tosteson \cite{Mgnbar,  Kontsevich} showed that the homology of moduli spaces of stable curves admits the structure of a finitely generated $\FS^\op$-module, implying that these homology groups exhibit a form of representation stability.  
\end{itemize}
We are therefore interested in understanding the structure of $\FS^{\op}$-modules in more detail. This seems quite difficult, and we have made little progress by direct attack.

When we realized the connection between $\FS^{\op}$-modules and $\fW^{\pos}$-modules, we were hopeful that we might be able to use the latter perspective to shed light on $\FS^{\op}$-modules. In this paper, the information actually flows in the other direction: we translate results from $\FS^{\op}$ to $\fW^{\pos}$. However, we are still hopeful that certain problems might be easier on the $\fW^{\pos}$ side, especially because of the more geometric nature of $\fW^{\pos}$. For example, a positive solution to the problem \S \ref{ss:problems}(b) would likely lead to new results on $\FS^{\op}$-modules.

There is also motivation purely on the Witt algebra side. Indeed, there are some open finiteness questions about the Witt algebra (e.g., \cite[Conjecture~1.2]{PetukhovSierra} and \cite[Conjecture~1.3]{PetukhovSierra}) to which the results or methods of this paper could be relevant.  

 In addition, there is a conjecture of Gelfand concerning the finite dimensionality of the homology of subalgebras of the Witt Lie algebra.  After we posted this paper,  Feigin--Kanel-Belov--Khoroshkin \cite{FKK} showed that Gelfand's conjecture follows from a noetherian property similar to the one here.  (The proof of the noetherian property in \cite{FKK} contains a gap, but their paper will be updated soon.)

\subsection{Open problems} \label{ss:problems}

We mention a few open problems in connection to this work:
\begin{enumerate}
\item Can Theorem~\ref{mainthm1} or~\ref{mainthm2} be proved directly, i.e., without passing through $\Fin^{\op}$-modules?  Even for $\fW_1$, we do not know a proof that avoids considering the infinite Witt algebra $\fW$.
\item Let $R_d$ be the $\fS_d$-invariant subalgebra of $\bV_1^{\otimes d} \cong \bk[x_1, \ldots, x_d]$. The ring $R_d$ carries an action of $\fW_1^{\pos}$ by algebra derivations. Let $M$ be a finitely generated $R_d$-module equipped with a compatible polynomial action of $\fW_1^{\pos}$. Is it true that any non-zero $\fW_1^{\pos}$-submodule of $M$ contains a non-zero $R_d$-submodule? We can prove this for $M=R_d$. One can pose a similar question for $\fW_n^{\pos}$ with $n>1$. A positive answer to this question would lead to much finer results on the structure of polynomial $\fW_n^{\pos}$-modules.
\item Let $\bV_n^{\vee}$ be the graded dual of $\bV_n$. Define a $\fW_n$-module to be \defn{algebraic} if it belongs to the tensor category generated by $\bV_n$ and $\bV_n^{\vee}$. The adjoint representation of $\fW_1$ is an example of an algebraic (and not polynomial) representation; see \S \ref{ss:finite-ex}. Is there a noetherian result for algebraic representations? We note that Petukhov--Sierra \cite[Theorem~4.2]{PetukhovSierra} have shown that $\Sym^2(\fW^{>0}_1)$ is noetherian, where $\fW^{>0}_1$ denotes the strictly positive part of $\fW_1$.
\item What is the Krull--Gabriel dimension of the $\fW_n$-module $\bV_n^{\otimes d}$?
\end{enumerate}

\subsection{Relation to other work}

Sierra--Walton \cite{sw1,sw2} studied the universal enveloping algebra of $\fW_n$, and showed that it is not noetherian. In the opposite direction, Petukhov--Sierra \cite{PetukhovSierra} proved that $\Sym^2(\fW^{>0}_1)$ is noetherian as a $\fW^{>0}_1$-module.

The Witt algebra $\fW_n$, was studied by Feigin--Shoiket \cite{FS} in connection to a computation of the  central series of  associative algebras. Feigin--Shoiket classified the irreducible representations of $\fW_n$ satisfying a semisimplicity and nonnegativity condition on weight spaces. In contrast to \cite{FS}, our results concern the Hilbert series of $\fW_n$ representations  with infinite length.

Our results imply that the category of finite length polynomial representations of $\fW_n$ is contravariantly equivalent to the category of finitely generated $\Fin$-modules. Wiltshire-Gordon classified the simple $\Fin$-modules \cite{wiltshire-gordon}, and under this equivalence, Feigin--Shoiket's classification of irreducibles corresponds precisely to Wiltshire-Gordon's.

Fresse \cite{Fresse} has used modules over an operad $P$ in the study of the  homological algebra of functors from $P$-algebras to  other categories. Powell \cite{Powell} has studied modules for the wiring category of an operad (and the Lie operad specifically).

Finally, we mention \cite{nishiyama}, which studies the finite-dimensional version of Schur--Weyl duality for the Witt Lie algebra.

\subsection{Outline}

In \S \ref{s:bg} we review operads, wiring categories, and the classical polynomial representation theory of $\fgl_n$. In \S \ref{s:equiv} we prove Theorem~\ref{mainthm4} relating the Lie algebra $\fh$ in infinite dimensions to the wiring category $\cW$. In \S \ref{s:findim}, we discuss the specialization functor, which relates $\fh$ to a Lie algebra $\fh_n$ in finite dimensions. Theorem~\ref{mainthm1} is proved here. Finally, in \S \ref{s:hilbert} we discuss Hilbert series, and prove Theorem~\ref{mainthm2}.

\subsection*{Acknowledgements}

PT would like to thank John Wiltshire--Gordon for introducing him to wiring categories of operads, and advocating for their use in representation stability.  

\section{Background} \label{s:bg}

In this section, we review some background material on polynomial representations of the general linear group, operads, and wiring categories.

\subsection{Polynomial representations} \label{ss:poly}

Fix a field $\bk$ of characteristic zero. All vector spaces are over $\bk$. A \defn{representation} of a category $\cC$ (or a \defn{$\cC$-module}) is a functor $\cC \to \Vec$; when $\cC$ is a $\bk$-linear category, we require representations to be $\bk$-linear functors. We write $\Rep(\cC)$ for the category of representations.
 
Let $\FB$ be the category of finite sets and bijections. For two $\FB$-modules $M$ and $N$, we define an $\FB$-module $M \otimes N$ by 
\[
  (M \otimes N)(S) = \bigoplus_{S=A \sqcup B} M(A) \otimes N(B)
\]
This construction defines a symmetric monoidal structure on $\Rep(\FB)$.

Let $\Pol$ be the category of polynomial functors $\Vec \to \Vec$. Since we are in characteristic~0, every polynomial functor is isomorphic to a (perhaps infinite) direct sum of Schur functors (see \cite[\S 5.3]{expos} for more information and references on Schur functors). Given an $\FB$-module $M$ and a vector space $V$, put
\[
  M\{V\} := \bigoplus_{r \ge 0} M([r]) \otimes_{\bk[\fS_r]} V^{\otimes r},
\]
where $\fS_r$ acts on $V^{\otimes r}$ by permuting tensor factors. Then $M\{-\}$ is a polynomial functor and the construction $M \mapsto M\{-\}$ defines an equivalence of symmetric monoidal categories $\Rep(\FB) \to \Pol$. (The tensor product of polynomial functors is defined pointwise.)

Let $\fgl=\bigcup_{n \ge 1} \fgl_n$ be the infinite general linear Lie algebra. This has a standard representation on $\bW=\bigcup_{n \ge 1} \bk^n$. We say that a representation of $\fgl$ is \defn{polynomial} if it is isomorphic to a direct sum of Schur functors applied to $\bW$. We write $\Rep^{\pol}(\fgl)$ for the category of polynomial representations. The evaluation functor
\begin{displaymath}
\Pol \to \Rep^{\pol}(\fgl), \qquad F \mapsto F(\bW)
\end{displaymath}
is an equivalence of categories, and compatible with the symmetric monoidal structures.

A \defn{weight} of $\fgl$ is a sequence $\lambda=(\lambda_1, \lambda_2, \ldots)$ of integers, almost all of which are zero. The \defn{support} of $\lambda$ is the set of indices $i$ for which $\lambda_i \ne 0$. We define the \defn{$\lambda$-weight space} of a $\fgl$-module $M$ in the usual manner. A polynomial representation of $\fgl$ is the direct sum of its weight spaces, and the weights that appear are non-negative (i.e., $\lambda_i \ge 0$ for all $i$). For more discussion and references on these notions, see \cite[\S 5.2]{expos}.

\subsection{Operads} \label{ss:operad}

A \defn{(linear) operad} $P$ is an $\FB$-module with extra structure:  
\begin{itemize}
\item for every finite set $S$, a composition map, denoted $\circ$:
  \[
    \bigoplus_{r \ge 0} P([r]) \otimes_{\bk[\fS_r]} \bigoplus_{S = a_1 \sqcup \cdots  \sqcup a_r} P(a_1) \otimes \dots \otimes P(a_r) \to P(S)
  \]
\item an identity element $\id_P \in P([1])$,
\end{itemize} 
satisfying an associativity and an identity axiom. Operads were originally introduced by May \cite{may}. We refer the reader to \cite[\S 5.2]{LodayValette} or  \cite[\S1]{Markl} for background on operads and algebras over them. 

Let $P$ be an operad. A \defn{$P$-algebra} consists of a vector space $A$, together with a map
\[
  p_* \colon A^{\otimes r} \to A,\quad   a_1 \otimes \dots \otimes a_r \mapsto p(a_1, \dots, a_r),
\]
for every $p \in P([r])$ satisfying equivariance, associativity, and identity axioms. We write $\Alg_P$ for the category of $P$-algebras.

\begin{remark}
 	Operads provide a framework for expressing certain algebraic structures on vector spaces.  (Namely algebraic structures on which consist of set of multilinear operations on a vector space $V$ that satisfy a set of universal identities).   One related framework is the notion of a \emph{polynomial identity algebra}, or {\it PI algebra}.  To any collection of  (noncommutative) polynomial identities $S$, one can associate a quotient of the associative operad $P:= \As/S$.  Then algebras satisfying the polynomial identities $S$ are equivalent to $P$-algebras.  
\end{remark}

Given a vector space $V$, there is a free $P$-algebra on $V$. This has underlying vector space $P\{V\}$, and the maps $p_*$ are defined using $\circ$. The identity element of $P([1])$ induces a natural map $V \to P\{V\}$. If $A$ is an arbitrary $P$-algebra the restriction along this map yields an isomorphism
\begin{displaymath}
\Hom_{\Alg_P}(P\{V\}, A) \to \Hom_{\Vec}(V, A).
\end{displaymath}
In other words, we may define a $P$-algebra homomorphism $P\{V\} \to A$ by specifying its values on a basis of $V$. 

\begin{remark}
A linear operad can also be called an operad in vector spaces. There is also a notion of operad in sets: such is an $\FB$-set (i.e., a functor from $\FB$ to the category of sets) with similar structure: just change direct sum to disjoint union and tensor product to cartesian product in the definition of $\circ$. Given an operad $P$ in sets, we can linearize it to get an operad $\bk P$ in vector spaces: $(\bk P)(S)$ is the vector space with basis $P(S)$. 
\end{remark}

\subsection{Derivations} \label{ss:deriv}

Let $P$ be a linear operad and let $A$ be a $P$-algebra. A \defn{derivation} of $A$ is a linear map  $\delta \colon A \to A$ that satisfies the Leibniz rule
\begin{displaymath}
\delta(p(a_1, \dots, a_r)) = \sum_{i = 1}^r p(a_1, \dots, \delta(a_i), \dots a_r),
\end{displaymath}
for every $p \in P([r])$ and $a_1, \dots, a_n \in A$.   The commutator of two derivations is a derivation,  so the collection of derivations forms a Lie algebra $\aut_P(A)$.  

Recall that $P\{V\}$ is the free $P$-algebra generated by $V$. There is a unique derivation $\delta \colon P\{V\} \to P\{V\}$ extending any linear map $V \to P\{V\}$. In other words there is a canonical identification
\[
  \aut_P (P\{V\}) = \Hom_{\Vec}(V, P\{V\})
\]
of vector spaces. Suppose that $V = \bk^n$ with standard basis $x_1, \dots, x_n$. Then given $p \in P\{\bk^n\}$ we write $p\del_{x_i}$ for the derivation corresponding to $p \otimes x_i^* \in \Hom(V,P\{V\})$. These elements span $\aut_P(P\{\bk^n\})$.

\begin{remark}
Derivations of a $P$-algebra $A$ are in canonical bijection with \emph{infinitesimal automorphisms of $A$}: automorphisms of $\bk[\epsilon]/(\epsilon^2) \otimes A$ (considered as a $P$ algebra in $\bk[\epsilon]/(\epsilon^2)$-modules) which induce the identity modulo $\epsilon$.
\end{remark}

\subsection{Wiring categories (PROPs)} \label{ss:wiring}

Let $P$ be an operad. The \defn{wiring category or (PROP)} associated to $P$ is a $\bk$-linear category $\cW_P$. Its objects are finite sets, and its morphism spaces are defined by
\[
\cW_P(S,T) := \bigoplus_{f \colon S \to T} \bigotimes_{i \in T}  P(f\inv(i)),
\]
where the sum is over all functions $f \colon S \to T$. Composition is defined on pure tensors by the formula
\begin{displaymath}
(f, (o_i))  \circ (g, (p_j)) = \left(f \circ g,  (o_i \circ (p_j)_{j \in f \inv(i)}) \right).
\end{displaymath}
Here $f \colon S \to T$ and $g \colon R \to S$ are functions and $o_i \in P(f^{-1}(i))$ for $i \in T$ and $p_j \in P(g^{-1}(j))$ for $j \in S$. 

\begin{remark}
The category $\cW_P$ is an example of a PROP, and the construction $P \mapsto \cW_P$ is the standard way of relating PROPs to operads  \cite[Example 60]{Markl}.
\end{remark}

\begin{remark}
There is also a notion of wiring category for an operad in sets; this is an ordinary category (not a $\bk$-linear category). The wiring category construction is compatible with linearization. Thus if $P$ is a linear operad that is obtained as the linearization of an operad in sets, its wiring category will be the linearization of an ordinary category. This is the case for many of the operads of interest.
\end{remark}

\subsection{Some examples} \label{ss:opex}

We now give some examples of the concepts introduced above.

\subsubsection{The trivial operad} \label{sss:trivial}

This operad has $P([1])=\bk$ and $P([n])=0$ for all $n \ne 1$. A $P$-algebra is simply a vector space, and the free $P$-algebra $P\{V\}$ is the vector space $V$. The Lie algebra $\aut_P(P\{V\})$ is the general linear Lie algebra $\fgl(V)$. The wiring category $\cW_P$ is $\bk[\FB]$, the linearization of $\FB$.

\subsubsection{The unital commutative operad} \label{sss:com}

This operad, denoted $\Com$, has $P(S)=\bk$ for all finite sets $S$. A $P$-algebra is a unital commutative associative $\bk$-algebra, and the free $P$-algebra $P\{V\}$ is the polynomial ring $\Sym(V)$ generated by $V$. The Lie algebra $\aut_P(P\{\bk^n\})$ is the Witt algebra $\fW_n$. The wiring category $\cW_P$ is $\bk[\Fin]$, where $\Fin$ is the category of all finite sets and maps between them.

\subsubsection{The non-unital commutative operad} \label{sss:comnu}

This operad, denoted $\Com^{\rn\ru}$, is defined just like $\Com$ except $P(\emptyset)=0$. A $P$-algebra is a commutative and associative, but not necessarily unital, $\bk$-algebra. The free $P$-algebra $P\{V\}$ is $\Sym_{\ge 1}(V)$, the positive degree part of $\Sym(V)$. The Lie algebra $\aut_P(P\{\bk^n\})$ is $\fW_n^{\pos}$. The wiring category $\cW_P$ is $\bk[\FS]$, where $\FS$ is the category of finite sets and surjections.

\subsubsection{The unital associative operad} \label{sss:assoc}

This operad, denoted $\As$, is the linearization of the operad in sets assigning to a finite set $S$ the collection of total orders on $S$. (The composition operation is given by composing totally ordered sets.) A $P$-algebra is a unital and associative $\bk$-algebra, and the free $P$-algebra $P\{V\}$ is $\rT(V)$, the tensor algebra on $V$. The category of \emph{non-commutative finite sets}, denoted $\NCFin$, is defined as follows. Its objects are finite sets. A morphism $S \to T$ is a function $f \colon S \to T$ equipped with a total ordering on each fiber $f^{-1}(t)$ for $t \in T$. We have $\cW_{\As}=\bk[\NCFin]$. The category $\NCFin$ was used by Ellenberg and Wiltshire-Gordon \cite{ellenberg-WG} to study configurations on manifolds which admit a non-vanishing vector field. There is also a non-unital version of this picture.

\subsubsection{The Lie operad} \label{sss:lie}

The operad ${\rm Lie}$ is the operad whose algebras are Lie algebras. The morphisms in $\cW$ are maps of finite sets whose fibers are decorated by Lie algebra operations. Representations of $\cW$ have been considered in \cite{Powell}. We will not discuss this example further in this paper, however.

\section{Equivalences of representation categories} \label{s:equiv}

In this section, we prove the equivalence of various representation categories associated to an operad. This includes Theorems~\ref{mainthm3} and~\ref{mainthm4}.

\subsection{Statement of results} \label{ss:setup}

We fix the following data for this section:
\begin{description}[align=right,labelwidth=2.5cm,leftmargin=!]
\item[ $P$ ] An operad
\item[ $\bW_n$ ] The vector space $\bk^n$, which carries the standard representation of $\fgl_n$
\item[ $\bV_n$ ] The free $P$-algebra $P\{\bW_n\}$
\item[ $\fh_n$ ] The Lie algebra $\aut_P(\bV_n)$ of derivations of the $P$-algebra $\bV_n$
\item[ $\fM_n$ ] The monoid $\End(\bV_n)$ of all $P$-algebra endomorphisms of $\bV_n$
\item[ $\cC$ ] The full subcategory of $\Alg_P$ spanned by the $\bV_n$'s
\item[ $\cW$ ] The wiring category $\cW_P$ of $P$
\end{description}
We also let $\bW=\bigcup_{n \ge 1} \bW_n$, and similarly define $\bV$, $\fh$, and $\fM$. We note that $\fgl_n$ naturally acts on $\bV_n$ by algebra derivations, which gives an inclusion $\fgl_n \subset \fh_n$; similarly, $\fgl \subset \fh$. We also note that there are natural inclusions $\bW_n \subset \bV_n$ and $\bW \subset \bV$ coming from the identity element of $P([1])$.

The space $\bV_n$ carries tautological actions of $\fh_n$ and $\fM_n$. We define a representation of $\fh_n$ or $\fM_n$ to be \defn{polynomial} if it occurs as a subquotient of a sum of tensor powers of $\bV_n$. We make similar definitions for $\fh$ and $\fM$. Similarly, there is a tautological $\cC$-module given by $\bV_n \mapsto \bV_n$, and we say that a $\cC$-module is \defn{polynomial} if it occurs as a subquotient of a sum of tensor powers of this module (where tensor products are defined pointwise). In all cases, we write $\Rep^{\pol}(-)$ for the category of polynomial representations.

The following theorem is our main result on categorical equivalences.  It is an operadic generalization of Schur-Weyl duality.   It implies Theorem~\ref{mainthm4}, and, when $P=\Com$, Theorem~\ref{mainthm3} (see \S \ref{sss:com}). The proof will take the remainder of this section.

\begin{theorem} \label{thm:equiv}
The following categories are naturally equivalent:
\begin{displaymath}
\Rep^{\pol}(\fh), \qquad
\Rep^{\pol}(\fM), \qquad
\Rep^{\pol}(\cC), \qquad
\Rep(\cW^{\op}).
\end{displaymath}
\end{theorem}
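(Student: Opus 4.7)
The strategy is to prove, for each of the three categories $\cD \in \{\Rep^{\pol}(\cC), \Rep^{\pol}(\fM), \Rep^{\pol}(\fh)\}$, that the tensor powers of the tautological module form a set of small projective generators whose endomorphism category is naturally identified with $\cW$. The equivalence $\cD \simeq \Rep(\cW^{\op})$ then follows by the standard Morita-theoretic principle: a Grothendieck abelian category with such generators is equivalent to the category of modules over their hom category. In each case I would build a $\bk$-linear functor $\iota \colon \cW \to \cD$ sending $[n]$ to $T^{\otimes n}$, where $T$ is the tautological module ($T(\bV_m) = \bV_m$ for $\cD = \Rep^{\pol}(\cC)$ and $T = \bV$ for $\cD \in \{\Rep^{\pol}(\fM), \Rep^{\pol}(\fh)\}$), and then show that $\iota$ is fully faithful, lands in projectives, and that its image generates $\cD$ under colimits.

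The central Hom computation is cleanest for $\Rep^{\pol}(\cC)$. A natural transformation $\varphi \colon T^{\otimes n} \to T^{\otimes k}$ is determined by the single element $\varphi_{\bV_n}(x_1 \otimes \cdots \otimes x_n) \in \bV_n^{\otimes k}$: for any $m$ and any $g_1, \dots, g_n \in \bV_m$, there is a unique $P$-algebra map $\bV_n \to \bV_m$ sending $x_i \mapsto g_i$, and naturality computes $\varphi_{\bV_m}(g_1 \otimes \cdots \otimes g_n)$ from $\varphi_{\bV_n}(x_1 \otimes \cdots \otimes x_n)$. The requirement that each $\varphi_{\bV_m}$ be $\bk$-linear in every tensor factor forces this element to lie in the weight-$(1,\dots,1)$ subspace of $\bV_n^{\otimes k}$ (viewed as a $\fgl_n$-module), which decomposes as
\[
\bigoplus_{f \colon [n] \to [k]} \bigotimes_{j \in [k]} P(f^{-1}(j)) = \cW(n,k).
\]
The same reasoning for general $M$ identifies $\Hom_{\Rep^{\pol}(\cC)}(T^{\otimes n}, M)$ with the weight-$(1,\dots,1)$ subspace of $M(\bV_n)$; exactness of the weight-space functor then yields projectivity of $T^{\otimes n}$, and the defining property of polynomial $\cC$-modules (subquotients of sums of $T^{\otimes n}$'s) combined with projectivity yields generation.

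For $\Rep^{\pol}(\fM)$ and $\Rep^{\pol}(\fh)$ I would proceed in parallel, after first establishing that $\bV^{\otimes n}$ is cyclic as an $\fh$-module with cyclic vector $x_1 \otimes \cdots \otimes x_n$; this follows from a direct check using that $\fh$ contains both the negative-degree operators $\del_{x_i}$ and the operators $f \del_{x_j}$ for arbitrary $f \in \bV$, which together suffice to reach any monomial tensor. Consequently an $\fh$-equivariant map $\bV^{\otimes n} \to M$ is determined by the image of the cyclic vector, which lies in the weight-$(1,\dots,1,0,\dots)$ subspace of $M$ by $\fgl$-equivariance; for $M = \bV^{\otimes k}$ this subspace is again $\cW(n,k)$. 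The three equivalences are organized compatibly by the forgetful tower $\Rep^{\pol}(\cC) \to \Rep^{\pol}(\fM) \to \Rep^{\pol}(\fh)$ obtained from evaluation at $\bV = \bigcup_n \bV_n$ and passage to infinitesimals; each forgetful functor carries $T^{\otimes n}$ to $\bV^{\otimes n}$ and restricts to the same embedding $\iota$, so the three equivalences with $\Rep(\cW^{\op})$ automatically agree.

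The principal obstacle will be showing, in the $\fh$ case, that every weight-$(1,\dots,1,0,\dots)$ vector of a polynomial $\fh$-module $M$ does extend to an $\fh$-equivariant map from $\bV^{\otimes n}$—equivalently, that the relations among $\fh$-translates of the cyclic vector are automatically satisfied in any polynomial target. A priori the $\fh$-module structure is strictly weaker than the $\cC$-module structure, so one might fear that $\Hom_{\fh}(\bV^{\otimes n}, M)$ is strictly larger than $\cW(n,k)$. Resolving this requires using polynomiality of $M$ to force the positive-degree action of $\fh$ to factor through the operadic compositions of $P$, which is where the universal property of $\bV$ as the free $P$-algebra on $\bW$ and classical Schur--Weyl duality for $\fgl$ have to be combined carefully.
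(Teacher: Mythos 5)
Your proposal follows the same Morita-theoretic blueprint as the paper: exhibit the tensor powers of the tautological object as compact projective generators whose endomorphism category is $\cW$, then invoke the standard Morita principle. You also organize the three equivalences through the same forgetful tower $\Rep^{\pol}(\cC)\to\Rep^{\pol}(\fM)\to\Rep^{\pol}(\fh)$ (evaluation at $\bV$, then passage to infinitesimals) that the paper uses. So the architecture matches.

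However, the step you flag as ``the principal obstacle'' is genuinely unresolved in your write-up, and the direction you gesture at (``force the positive-degree action of $\fh$ to factor through the operadic compositions of $P$'') is not the argument that closes it; it is also unclear how one would make that precise. The missing step is surjectivity of the restriction map
\[
\Hom_{\fh}(\bV^{\otimes n}, M) \longrightarrow \Hom_{\fgl}(\bW^{\otimes n}, M),
\]
i.e., projectivity of $\bV^{\otimes n}$ in $\Rep^{\pol}(\fh)$, for an arbitrary polynomial $M$. The paper's resolution (Proposition~\ref{prop:gproj}) is both more elementary and decisive: write $M = A/B$ with $B \subseteq A$ submodules of $X = \bigoplus_i \bV^{\otimes m_i}$; given a $\fgl$-map $\bW^{\otimes n}\to M$, lift it to a $\fgl$-map into $A$ using semisimplicity of polynomial $\fgl$-representations; extend this lift to an $\fh$-map $\bV^{\otimes n}\to X$ by the already-established case $M = X$ (Proposition~\ref{prop:homg}); then observe that because $\bW^{\otimes n}$ generates $\bV^{\otimes n}$ as an $\fh$-module, the image must land in $A$, hence descends to $M$. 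Without this lifting argument, the Morita step for $\Rep^{\pol}(\fh)$ (and, by the parallel reasoning you describe, for $\Rep^{\pol}(\fM)$) is incomplete, so I would regard the proposal as having a real gap at its center.

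Two smaller remarks. First, your cyclicity claim appeals to ``negative-degree operators $\del_{x_i}$'' in $\fh$; these exist only when $P([0])\ne 0$ (e.g.\ for $\Com$ but not $\Com^{\rn\ru}$), so this phrasing does not cover all operads. The paper's Proposition~\ref{prop:generators} instead picks fresh indices $a(1),\dots,a(n)$ and uses $p_i\del_{x_{a(i)}}$, which avoids degree $-1$ operators entirely and works uniformly. Second, for the $\cC$ case, the assertion that $\Hom_{\Rep^{\pol}(\cC)}(T^{\otimes n},M)$ equals the multilinear weight space of $M(\bV_n)$ needs the converse direction (that every such vector extends to a natural transformation) spelled out; the paper sidesteps this by proving faithfulness of the colimit functor $\Phi$ and then comparing Hom spaces of principal projectives through the chain $\Hom_\cW \to \Hom_\cC(T_n,T_m) \to \Hom_\fM(\bV^{\otimes n},\bV^{\otimes m})$.
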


We can now appeal to known noetherianity results for combinatorial categories to obtain noetherianity results for the Lie algebra $\fh$.

\begin{corollary} \label{cor:comm-noeth}
Suppose $P$ is either $\Com$ or $\Com^{\rm nu}$. Then  $\Rep^{\pol}(\fh)$ is locally noetherian.
\end{corollary}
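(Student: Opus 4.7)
The plan is to deduce this immediately from Theorem~\ref{thm:equiv} together with known noetherianity results for combinatorial categories, since local noetherianity is preserved by equivalences of abelian categories.

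First I would invoke Theorem~\ref{thm:equiv} to obtain an equivalence
\[
\Rep^{\pol}(\fh) \simeq \Rep(\cW^{\op}),
\]
and then identify $\cW$ explicitly using the examples recorded in \S\ref{ss:opex}. When $P=\Com$ (see \S\ref{sss:com}), the wiring category is $\cW = \bk[\Fin]$, so $\Rep(\cW^{\op})$ is the category of $\Fin^{\op}$-modules. When $P=\Com^{\rn\ru}$ (see \S\ref{sss:comnu}), the wiring category is $\cW = \bk[\FS]$, so $\Rep(\cW^{\op})$ is the category of $\FS^{\op}$-modules.

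Next I would appeal to the noetherianity theorem of Sam--Snowden \cite{catgb}, which establishes that $\Rep(\Fin^{\op})$ and $\Rep(\FS^{\op})$ are both locally noetherian categories. Transferring this under the equivalence above yields that $\Rep^{\pol}(\fh)$ is locally noetherian, as desired. Here one uses the standard fact that local noetherianity is a purely categorical property (a Grothendieck abelian category is locally noetherian iff it has a generating set of noetherian objects), so it passes across any equivalence of categories.

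There is no real obstacle at this stage: the bulk of the work has been done in Theorem~\ref{thm:equiv}, and the corollary is essentially a dictionary translation combined with citation of \cite{catgb}. The only thing worth flagging is that one should make sure the identification of $\cW$ with $\bk[\Fin]$ (resp.\ $\bk[\FS]$) respects the opposite category structure, but this is automatic since the linearization of the opposite of a category equals the opposite of its linearization.
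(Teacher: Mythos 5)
Your proof is correct and takes essentially the same route as the paper: invoke Theorem~\ref{thm:equiv}, identify $\cW$ as $\bk[\Fin]$ or $\bk[\FS]$ via \S\ref{sss:com} and \S\ref{sss:comnu}, and cite the noetherianity results of \cite{catgb} for $\Fin^{\op}$- and $\FS^{\op}$-modules. The extra remarks about local noetherianity being a categorical invariant and about opposites commuting with linearization are fine but not needed beyond what the paper states.
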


\begin{proof}
If $P=\Com^{\rm nu}$ then $\cW = \bk[\FS]$ (\S \ref{sss:comnu}), and the category of $\cW^{\op}$-modules is locally noetherian by \cite[Corollary 8.1.3]{catgb}. If $P=\Com$ then $\cW=\bk[\Fin]$ (\S \ref{sss:com}), and the category of $\cW^{\op}$-modules is locally noetherian by \cite[Corollary 8.4.2]{catgb}.
\end{proof}

\subsection{Action of wiring categories}

Let $R$ be a $P$-algebra. Suppose that $\phi \colon [n] \to [m]$ is a morphism in $\cW$, corresponding to a function $f \colon [n] \to [m]$ and operations $p_i \in P(f^{-1}(i))$ for each $i \in [m]$ (i.e., $\phi$ lives in the $f$ summand of the $\Hom$ space, and is a pure tensor). We define a map $\phi_* \colon R^{\otimes n} \to R^{\otimes m}$ by
\begin{displaymath}
x_1 \otimes \cdots \otimes x_n \mapsto p_1(x_{j})_{j \in f \inv(1)} \otimes \dots \otimes p_m(x_{j})_{j \in f \inv(m)}.
\end{displaymath}
We extend this definition linearly to all morphisms $\phi$ in $\cW$. One readily verifies the following properties of this construction:
\begin{enumerate}
\item $\phi_*$ is a map of $P$-algebras, using the natural $P$-algebra structure on tensor powers.
\item It is functorial, that is, if $\psi \colon [m] \to [\ell]$ is a second morphism in $\cW$ then $(\psi \circ \phi)_*=\psi_* \circ \phi_*$.
\item The construction $\phi_*$ is natural in $R$.
\end{enumerate}
In particular, applying this in the case where $R=\bV$, we obtain a map $\phi_* \colon \bV^{\otimes n} \to \bV^{\otimes m}$ which commutes with the action of $\fgl$ by naturality. We can restrict the domain of this map to $\bW^{\otimes n} \subset \bV^{\otimes n}$ to obtain a map of $\fgl$-representations. We thus have a natural map
\begin{equation} \label{eq:Wpdef}
\Hom_{\cW}([n], [m]) \to \Hom_{\fgl}(\bW^{\otimes n}, \bV^{\otimes m})
\end{equation}
We will require the following result about this construction:

\begin{proposition}\label{prop:Wpdef}
The map \eqref{eq:Wpdef} is an isomorphism.
\end{proposition}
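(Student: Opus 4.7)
The plan is to evaluate both sides of \eqref{eq:Wpdef} independently and then check that the natural bijection coincides with the given map.

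First, I would invoke the Schur--Weyl dictionary from \S\ref{ss:poly}. Under the equivalence $\Pol \simeq \Rep^{\pol}(\fgl)$, the representation $\bW^{\otimes n}$ is representable of degree $n$: for any polynomial $\fgl$-representation $M$, evaluation at $x_1\otimes\cdots\otimes x_n$ gives a natural isomorphism
\[
\Hom_\fgl(\bW^{\otimes n},M)\;\xrightarrow{\ \sim\ }\;M_{\lambda},\qquad \lambda=(\underbrace{1,\dots,1}_{n},0,0,\dots).
\]
So the problem reduces to computing the $\lambda$-weight space of $\bV^{\otimes m}$.

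Next I would decompose this weight space using the free $P$-algebra structure $\bV=P\{\bW\}$. Since weights are additive across tensor factors,
\[
(\bV^{\otimes m})_\lambda \;=\; \bigoplus_{[n]=S_1\sqcup\cdots\sqcup S_m}\bV_{\chi_{S_1}}\otimes\cdots\otimes\bV_{\chi_{S_m}},
\]
where the sum runs over ordered partitions of $[n]$ into $m$ (possibly empty) blocks and $\chi_S$ denotes the indicator weight of $S$. Such an ordered partition is exactly a function $f\colon[n]\to[m]$ via $S_i=f^{-1}(i)$, so the index set is $\Hom_{\Fin}([n],[m])$. Within each factor, only the summand $r=|S|$ of $\bV=\bigoplus_{r}P([r])\otimes_{\bk[\fS_r]}\bW^{\otimes r}$ contributes to $\bV_{\chi_S}$, and since $(\bW^{\otimes r})_{\chi_S}$ is a free rank-one $\bk[\fS_r]$-module on the orderings of $S$, the coinvariants with $P([r])$ produce a canonical isomorphism $\bV_{\chi_S}\cong P(S)$. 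Assembling these identifications gives
\[
\Hom_\fgl(\bW^{\otimes n},\bV^{\otimes m})\;\cong\;\bigoplus_{f\colon[n]\to[m]}\bigotimes_{i\in[m]}P(f^{-1}(i))\;=\;\Hom_{\cW}([n],[m]).
\]

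Finally I would verify that this composite isomorphism coincides with \eqref{eq:Wpdef}. Unwinding the definition of $\phi_*$ given above, for a pure tensor $\phi=(f,(p_i))$ in $\cW$ we have
\[
\phi_*(x_1\otimes\cdots\otimes x_n)\;=\;p_1\bigl((x_j)_{j\in f^{-1}(1)}\bigr)\otimes\cdots\otimes p_m\bigl((x_j)_{j\in f^{-1}(m)}\bigr),
\]
which lies in $(\bV^{\otimes m})_\lambda$ and is precisely the image of $\bigotimes_i p_i$ under the identifications above. The argument is conceptually light; the main obstacle is bookkeeping, namely keeping the symmetric group actions straight while passing through the coend defining $P\{\bW\}$ so that the orderings of the fibers $f^{-1}(i)$ used to write $\phi_*$ are compatible with the canonical isomorphisms $\bV_{\chi_S}\cong P(S)$. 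Once these conventions are pinned down, the matching is formal.
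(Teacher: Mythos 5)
Your argument is correct and matches the paper's proof in essence: the paper's identity $\Hom_{\fgl}(\bW^{\otimes n}, M\{\bW\}) = M([n])$ for an $\FB$-module $M$ is precisely the representability fact you invoke (the $1^n$-weight space of $M\{\bW\}$ is $M([n])$), and computing $(P^{\otimes m})([n])$ via the tensor product of $\FB$-modules is the same calculation as your hands-on decomposition of the $1^n$-weight space of $\bV^{\otimes m}$ into a sum over ordered set partitions. Both proofs then finish the same way, by tracing the resulting identification back to the formula for $\phi \mapsto \phi_*$.
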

      
\begin{proof}
If $M$ is an $\FB$-module then we have a polynomial representation $M\{\bW\}$ of $\fgl$, and there is a natural identification
\begin{displaymath}
\Hom_{\fgl}(\bW^{\otimes n}, M\{\bW\}) = M([n]).
\end{displaymath}
We apply this to the $\FB$-module $M=P^{\otimes m}$, where the tensor product on the right side is of $\FB$-modules (see \S \ref{ss:poly}). We have $M\{\bW\}=\bV^{\otimes m}$. We thus find
\begin{displaymath}
\Hom_{\fgl}(\bW^{\otimes n}, \bV^{\otimes m}) = (P^{\otimes m})([n]) = \bigoplus_{[n] = a_1 \sqcup \dots \sqcup a_m} \bigotimes_{i = 1}^m  P(a_i),
\end{displaymath}
where in the second step we used the definition of tensor product of $\FB$-modules. This last vector space is exactly $\Hom_\cW([n],[m])$.  Tracing through the inverse isomorphism, we see that it is as described in the proposition.  
\end{proof}

\subsection{Comparing $\fh$ and $\cW$}

We now establish the equivalence of the categories $\Rep^{\pol}(\fh)$ and $\Rep(\cW^{\op})$. We first need to establish some basic results about these categories.

\begin{proposition} \label{prop:generators}
The $\fh$-module $\bV^{\otimes n}$ is generated by the subspace $\bW^{\otimes n}$.
\end{proposition}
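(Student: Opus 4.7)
\medskip

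\noindent\textbf{Proof plan.} To prove that the $\fh$-submodule of $\bV^{\otimes n}$ generated by $\bW^{\otimes n}$ is all of $\bV^{\otimes n}$, it suffices to show that every pure tensor $p_1 \otimes \cdots \otimes p_n$ with $p_i \in \bV$ lies in this submodule. The basic strategy is to start from a pure tensor of variables $x_{j_1} \otimes \cdots \otimes x_{j_n} \in \bW^{\otimes n}$ and use derivations of the form $p\del_{x_j}$ (see \S\ref{ss:deriv}) to surgically replace each $x_{j_k}$ with the desired $p_k$, one factor at a time.

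The first step is a choice of \emph{fresh} indices. Each $p_i \in \bV$ lies in the free $P$-algebra on finitely many variables, so I would pick distinct indices $j_1, \ldots, j_n$ such that none of $x_{j_1}, \ldots, x_{j_n}$ appears in any of $p_1, \ldots, p_n$; this is possible precisely because $\bW$ is infinite-dimensional. I would then set $\delta_k = p_k \del_{x_{j_k}} \in \fh$ and claim that
\[
\delta_n \circ \delta_{n-1} \circ \cdots \circ \delta_1 \,(x_{j_1} \otimes \cdots \otimes x_{j_n}) = p_1 \otimes \cdots \otimes p_n,
\]
proved by induction on $k$ showing that the partial composition yields $p_1 \otimes \cdots \otimes p_k \otimes x_{j_{k+1}} \otimes \cdots \otimes x_{j_n}$.

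The inductive step is the core calculation. Applying $\delta_k$ to the tensor $p_1 \otimes \cdots \otimes p_{k-1} \otimes x_{j_k} \otimes \cdots \otimes x_{j_n}$ via the Leibniz rule produces a sum over the $n$ tensor factors; the key observation is that $\delta_k = p_k\del_{x_{j_k}}$ annihilates every element of $\bV$ whose expression does not involve $x_{j_k}$ (a standard induction using Leibniz). By our freshness choice, the factors $p_1, \ldots, p_{k-1}$ are killed, the factors $x_{j_{k+1}}, \ldots, x_{j_n}$ are killed since their indices differ from $j_k$, and the only surviving contribution is $\delta_k(x_{j_k}) = p_k$ in slot $k$.

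The main subtlety, and essentially the only place real content enters, is the availability of fresh variables: the argument depends crucially on $\bW$ being infinite-dimensional and on each $p_i$ involving only finitely many of the $x_j$'s. Once the indices $j_1,\ldots,j_n$ are chosen disjoint from the supports of the $p_i$'s, the cascade of derivations is clean and the remainder is routine. I do not anticipate any additional difficulty.
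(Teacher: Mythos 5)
Your argument is correct and is essentially the paper's proof: choose fresh indices $j_1,\ldots,j_n$ disjoint from all variables appearing in the $p_i$, then apply the derivations $p_k\del_{x_{j_k}}$ in sequence to $x_{j_1}\otimes\cdots\otimes x_{j_n}$, noting that freshness kills all Leibniz terms except the diagonal one. The only cosmetic difference is that the paper first reduces to the case where each $p_i$ is a weight vector (to speak of the ``support of a weight'') and writes the composite as a single element $Y$ of $U(\fh)$, while you phrase the freshness condition directly in terms of which variables occur and unwind the composition by induction.
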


\begin{proof}
Let $m \in \bV^{\otimes n}$ be an element of the form $m = p_1\otimes \dots \otimes p_n$ where $p_i \in \bV$ is a weight vector. It suffices to show that $m$ is in the submodule generated by $\bW^{\otimes n}$. Let $\{x_i\}_{i \ge 1}$ be the standard basis of $\bW$. Let $a(1), \ldots, a(n)$ be distinct indices not appearing in the supports of the weights of the $p_i$'s. Let $x \in \bW^{\otimes n}$ be the pure tensor $x_{a(1)} \otimes \dots \otimes x_{a(n)}$, and let $Y=(p_1 \del_{x_{a(1)}}) \cdots (p_n \del_{x_{a(n)}})$, which is an element of the universal enveloping algebra of $\fh$. We have $\partial_{a(i)}(p_j)=0$ for all $i$ and $j$ and $\partial_{a(i)}(x_{a(j)})=\delta_{i,j}$. It follows that $Yx=m$, which completes the proof.
\end{proof}

Given a $\cW$-morphism $\phi \colon [n] \to [m]$, we have seen that there is an associated map $\phi_* \colon \bV^{\otimes n} \to \bV^{\otimes m}$ that is $\fgl$-equivariant. In fact, since $\phi$ is natural in $\bV$, it commutes with $\fM$, the endomorphism monoid of $\bV$. It is not difficult to see that it also commutes with $\fh$, as elements of $\fh$ can be seen as infinitesimal automorphisms of $\bV$. We thus have maps
\begin{equation} \label{eq:homg}
\Hom_{\cW}([n], [m]) \to \Hom_{\fh}(\bV^{\otimes n}, \bV^{\otimes m}) \to \Hom_{\fgl}(\bW^{\otimes n}, \bV^{\otimes m}),
\end{equation}
where the second map is restriction to the $\fgl$-subrepresentation $\bW^{\otimes n} \subset \bV^{\otimes n}$.

\begin{proposition} \label{prop:homg}
Both maps in \eqref{eq:homg} are isomorphisms.
\end{proposition}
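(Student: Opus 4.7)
The plan is to leverage Proposition~\ref{prop:Wpdef}, which says the composition of the two maps in \eqref{eq:homg} is already an isomorphism. Once the composition is known to be bijective, it suffices to show that the second map (restriction to $\bW^{\otimes n}$) is injective: then the composition being surjective forces the first map to be surjective, and injectivity of the first map follows automatically from injectivity of the composition.

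So the only real content is injectivity of the restriction map $\Hom_{\fh}(\bV^{\otimes n}, \bV^{\otimes m}) \to \Hom_{\fgl}(\bW^{\otimes n}, \bV^{\otimes m})$. This is exactly where Proposition~\ref{prop:generators} comes in: since $\bW^{\otimes n}$ generates $\bV^{\otimes n}$ as an $\fh$-module, any $\fh$-equivariant map $\bV^{\otimes n} \to \bV^{\otimes m}$ is determined by its values on $\bW^{\otimes n}$, so if it vanishes there it vanishes everywhere.

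The only thing that still needs a sentence of justification is the claim (asserted in the paragraph preceding the proposition) that $\phi_* \colon \bV^{\otimes n} \to \bV^{\otimes m}$ is actually $\fh$-equivariant, so that the first map in \eqref{eq:homg} is well-defined. I would give this a direct argument: the naturality of the construction $R \mapsto \phi_*$ in the $P$-algebra $R$ (property~(c) of the wiring action) says that $\phi_*$ commutes with every $P$-algebra endomorphism of $\bV$, hence with every $P$-algebra endomorphism of $\bk[\epsilon]/(\epsilon^2) \otimes \bV$ that is the identity modulo $\epsilon$; by the infinitesimal-automorphism interpretation of derivations recorded after \S\ref{ss:deriv}, this is precisely $\fh$-equivariance of $\phi_*$.

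Putting this together, the proof would read: (i) verify $\fh$-equivariance of $\phi_*$ by the naturality/infinitesimal-automorphism argument above, so that the first map is defined; (ii) invoke Proposition~\ref{prop:generators} to conclude the second map is injective; (iii) combine with Proposition~\ref{prop:Wpdef} to conclude both maps are isomorphisms via the abstract diagram chase described in the first paragraph. I do not anticipate any serious obstacle: the hardest (and only non-formal) step is step~(i), which really just unwinds the definition of a derivation using the dual-number formalism.
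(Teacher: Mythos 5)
Your proof is correct and follows the same route as the paper: injectivity of the restriction map via Proposition~\ref{prop:generators}, bijectivity of the composite via Proposition~\ref{prop:Wpdef}, and a formal diagram chase. The extra care you take in step~(i) to justify $\fh$-equivariance of $\phi_*$ via the infinitesimal-automorphism interpretation is welcome but not a departure — the paper asserts exactly this in the paragraph preceding the proposition (without spelling out the dual-number argument), so your write-up is a slightly more detailed version of the same proof.
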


\begin{proof}
The second map is injective since $\bW^{\otimes n}$ generates $\bV^{\otimes n}$ as a $\fh$-module (Proposition~\ref{prop:generators}). Since the composite map is an isomorphism (Proposition~\ref{prop:Wpdef}), the result follows.
\end{proof}

The following proposition is the main result about $\fh$ that we will need.

\begin{proposition} \label{prop:gproj}
For any polynomial $\fh$-representation $M$, the restriction map
\begin{displaymath}
\Hom_{\fh}(\bV^{\otimes n}, M) \to \Hom_{\fgl}(\bW^{\otimes n}, M)
\end{displaymath}
is an isomorphism. In particular, $\bV^{\otimes n}$ is a projective object of $\Rep^{\pol}(\fh)$.
\end{proposition}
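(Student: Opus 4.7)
\emph{Approach.} The plan is to establish the Hom-isomorphism first and then deduce projectivity as a formal consequence of semisimplicity on the $\fgl$-side. Injectivity is immediate from Proposition~\ref{prop:generators}: since $\bW^{\otimes n}$ generates $\bV^{\otimes n}$ as an $\fh$-module, any $\fh$-linear map $\bV^{\otimes n}\to M$ vanishing on $\bW^{\otimes n}$ must be zero.

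\emph{Surjectivity via a lifting argument.} The case $M=\bV^{\otimes m}$ is Proposition~\ref{prop:homg}, and I would bootstrap to arbitrary polynomial $M$ using the subquotient description. Write $M\cong N/K$ where $N$ is an $\fh$-submodule of some $T=\bigoplus_{i\in I}\bV^{\otimes m_i}$. As a $\fgl$-representation, $T$ is a direct sum of Schur functors evaluated at $\bW$, so $\Rep^{\pol}(\fgl)$ is semisimple and $N$ is itself a polynomial $\fgl$-module. Given a $\fgl$-map $f\colon\bW^{\otimes n}\to M$, the projectivity of $\bW^{\otimes n}$ in $\Rep^{\pol}(\fgl)$ yields a $\fgl$-lift $g\colon\bW^{\otimes n}\to N$. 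Composing with $N\hookrightarrow T$, the image of $g$ lands in a finite subsum $\bigoplus_{i\in I_0}\bV^{\otimes m_i}$ -- because $\bW^{\otimes n}$ has finitely many simple $\fgl$-constituents and Schur's lemma forces each to embed in only finitely many summands of $T$ -- so Proposition~\ref{prop:homg} applied summand by summand produces an $\fh$-extension $\tilde g\colon\bV^{\otimes n}\to T$ of $g$. By Proposition~\ref{prop:generators}, $\tilde g(\bV^{\otimes n})=U(\fh)\cdot\tilde g(\bW^{\otimes n})\subseteq N$ since $N$ is $\fh$-stable, and composing $\tilde g$ with $N\twoheadrightarrow M$ furnishes the desired $\fh$-extension of $f$.

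\emph{Projectivity.} Once the isomorphism is established naturally in $M$, the exactness of $\Hom_{\fgl}(\bW^{\otimes n},-)$ on $\Rep^{\pol}(\fh)$ -- an immediate consequence of the semisimplicity of $\Rep^{\pol}(\fgl)$ -- transfers to $\Hom_{\fh}(\bV^{\otimes n},-)$, showing that $\bV^{\otimes n}$ is projective.

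\emph{Main obstacle.} The substantive step is ensuring that the $\fh$-extension lives inside the prescribed subobject $N\subseteq T$ rather than merely in $T$. This is the clean payoff of Proposition~\ref{prop:generators}: once $\tilde g|_{\bW^{\otimes n}}$ takes values in an $\fh$-stable subspace, $\tilde g$ itself does as well, because $\bW^{\otimes n}$ generates $\bV^{\otimes n}$ over $\fh$. The only other delicate point -- reducing a $\fgl$-map into the possibly infinite direct sum $T$ to finitely many summands -- is handled by the fact that $\bW^{\otimes n}$ has only finitely many $\fgl$-isotypic components.
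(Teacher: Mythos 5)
Your argument matches the paper's proof essentially line for line: write $M$ as a subquotient of a direct sum $T=\bigoplus_i\bV^{\otimes m_i}$, use semisimplicity of $\Rep^{\pol}(\fgl)$ to lift the given $\fgl$-map into the $\fh$-submodule $N$, extend to an $\fh$-map $\bV^{\otimes n}\to T$ via Proposition~\ref{prop:homg}, and invoke Proposition~\ref{prop:generators} to conclude the extension lands inside $N$, after which projectivity follows from exactness of $\Hom_{\fgl}(\bW^{\otimes n},-)$. The one place you are more explicit than the paper---justifying that the $\fgl$-lift into the possibly infinite sum $T$ factors through a finite subsum before applying Proposition~\ref{prop:homg} summand by summand---is a correct and worthwhile piece of extra care that the paper leaves implicit.
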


\begin{proof}
The map is injective since $\bW^{\otimes n}$ generates $\bV^{\otimes n}$ (Proposition~\ref{prop:generators}). We must prove surjectivity. Thus suppose that $\phi \colon \bW^{\otimes n} \to M$ is a given $\fgl$-equivariant map. Write $M=A/B$ where $B \subset A$ are submodules of $X=\bigoplus_{i \in I} \bV^{\otimes m_i}$; this is possible by the definition of polynomial representation. We thus have a surjection $\pi \colon A \to M$. Let $\psi \colon \bW^{\otimes n} \to A$ be a lift of $\phi$ (i.e., $\phi=\pi \circ \psi$), which exists since polynomial representations of $\fgl$ are semisimple. By Proposition~\ref{prop:homg}, $\psi$ is the restriction to $\bW^{\otimes n}$ of a unique map of $\fh$-modules $\tilde{\psi} \colon \bV^{\otimes n} \to X$. Since $\tilde{\psi}$ maps $\bW^{\otimes n}$ into $A$ and $\bW^{\otimes n}$ generates $\bV^{\otimes n}$, it follows that $\tilde{\psi}$ maps into $A$. Thus $\tilde{\phi} = \pi \circ \tilde{\psi}$ is a map $\bV^{\otimes n} \to M$ of $\fh$-modules that restricts to $\phi$ on $\bW^{\otimes n}$. This proves surjectivity. Since $\Hom_{\fgl}(\bW^{\otimes n}, -)$ is an exact functor, it follows that $\bV^{\otimes n}$ is projective.
\end{proof}

We can now prove part of the main theorem:

\begin{proposition}
The categories $\Rep(\cW^{\op})$ and $\Rep^{\pol}(\fh)$ are naturally equivalent.
\end{proposition}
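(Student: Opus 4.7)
The plan is to exhibit the equivalence as a colimit-preserving adjoint pair $T \dashv S$ between $\Rep(\cW^{\op})$ and $\Rep^{\pol}(\fh)$, and then verify that the unit and counit are natural isomorphisms on appropriate generators. First I would define
\[
S \colon \Rep^{\pol}(\fh) \to \Rep(\cW^{\op}), \qquad S(M)([n]) = \Hom_{\fh}(\bV^{\otimes n}, M),
\]
made contravariant in $[n]$ via precomposition with the maps $\phi_*$ of Proposition~\ref{prop:homg}. Its left adjoint is forced to be the coend
\[
T(N) = \int^{[n] \in \cW} N([n]) \otimes \bV^{\otimes n},
\]
which lands in $\Rep^{\pol}(\fh)$ as a quotient of a direct sum of $\bV^{\otimes n}$'s. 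The adjunction $T \dashv S$ is then a routine coend/Hom manipulation.

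The key technical step will be showing that $S$ preserves all small colimits ($T$ does so automatically as a left adjoint). For this I would use Proposition~\ref{prop:gproj} to rewrite $S(M)([n]) = \Hom_{\fgl}(\bW^{\otimes n}, M)$, together with the fact that the restriction functor $\Rep^{\pol}(\fh) \to \Rep^{\pol}(\fgl)$ preserves colimits (both are computed on underlying vector spaces). Since polynomial $\fgl$-representations are semisimple with Schur decomposition $M \cong \bigoplus_\lambda \bS_\lambda\bW \otimes U_\lambda$, the functor $\Hom_{\fgl}(\bW^{\otimes n}, -)$ is the multiplicity extraction $M \mapsto \bigoplus_{|\lambda|=n} M_\lambda \otimes U_\lambda$ (equivalently a weight space of $M$), which is exact and commutes with arbitrary colimits. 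This passage from $\fh$-equivariant to $\fgl$-equivariant Homs is where Proposition~\ref{prop:gproj} does its essential work, and I expect it to be the main subtlety in the proof.

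With colimit preservation of both $T$ and $S$ in hand, it suffices to check that the unit and counit are isomorphisms on the representables $h_{[m]} \in \Rep(\cW^{\op})$ and the tensor powers $\bV^{\otimes m} \in \Rep^{\pol}(\fh)$, respectively. The co-Yoneda lemma computes $T(h_{[m]}) = \bV^{\otimes m}$, and Proposition~\ref{prop:homg} identifies $S(\bV^{\otimes m})([n]) = \Hom_{\cW}([n],[m]) = h_{[m]}([n])$; these identifications match up the unit and counit on representables/tensor powers with identity maps. Every $\cW^{\op}$-module is a colimit of representables, so the unit is an isomorphism throughout. For the counit, I would check that every polynomial $\fh$-module $M$ is a colimit of $\bV^{\otimes n}$'s: the evaluation map $\bigoplus_n \Hom_{\fgl}(\bW^{\otimes n}, M) \otimes \bW^{\otimes n} \to M$ is already surjective on $\fgl$-modules (each Schur functor $\bS_\lambda\bW$ is a quotient of $\bW^{\otimes|\lambda|}$), and lifting via Proposition~\ref{prop:gproj} promotes this to an $\fh$-equivariant surjection $\bigoplus \bV^{\otimes n} \twoheadrightarrow M$. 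Iterating on the kernel, which remains polynomial since $\Rep^{\pol}(\fh)$ is closed under subobjects, yields a two-term presentation of $M$ by direct sums of $\bV^{\otimes n}$'s, exhibits $M$ as a colimit, and completes the verification.
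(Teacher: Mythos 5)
Your proof is correct and follows essentially the same route as the paper: both rest on Propositions~\ref{prop:gproj} and~\ref{prop:homg} to identify $\{\bV^{\otimes n}\}$ as compact projective generators of $\Rep^{\pol}(\fh)$ and to match their full subcategory with $\cW$. The paper then simply invokes the standard Morita-theoretic extension to the whole categories, whereas you unwind that machinery explicitly by constructing the adjunction $T \dashv S$ via the coend and the $\Hom$ functor, verifying cocontinuity of $S$ through semisimplicity of polynomial $\fgl$-representations, and checking unit and counit on generators.
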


\begin{proof}
Let $P_n = \Hom_{\cW^{\op}}([n], -)$ be the principal projective $\cW^{\op}$-module. The $P_n$ are compact projective generators. Let $\cP$ be the full subcategory of $\Rep(\cW^{\op})$ that they span. The functor $\cW \to \cP$ given by $[n] \mapsto P_n$ is easily seen to be an equivalence.

Let $Q_n=\bV^{\otimes n}$. It follows from Proposition~\ref{prop:gproj} that the $Q_n$'s are compact projective generators of $\Rep^{\pol}(\fh)$. Let $\cQ$ be the full subcategory that they span. Proposition~\ref{prop:homg} shows that the functor $\cW \to \cQ$ given by $[n] \mapsto Q_n$ is an equivalence.

Combining the above two paragraphs, we see that $\cP$ and $\cQ$ are equivalent. By a standard result from Morita theory, the equivalence $\cP \to \cQ$ uniquely extends to an equivalence $\Rep(\cW^{\op}) \to \Rep^{\pol}(\fh)$. (Briefly, given a $\cW^{\op}$-module, choose a presentation of it by objects in $\cP$, and apply the functor $\cP \to \cQ$ to obtain a presentation for the corresponding object of $\Rep^{\pol}(\fh)$.)
\end{proof}

\subsection{Representations of $\fM$}

We now examine polynomial representations of $\fM$. We first show that we can essentially differentiate such a representation to get a polynomial representation of $\fh$. Let $\{x_i\}_{i \ge 1}$ be a basis for $\bW$. Suppose $f \in \bV$ is homogeneous, $i \ge 1$, and $t \in \bk$. We define an element $\gamma_{f,i,t} \in \fM$ as follows:
\begin{displaymath}
\gamma_{f,i,t}(x_j)= \begin{cases} x_i+tf & \text{if $j=i$}\\ x_j & \text{if $j \ne i$} \end{cases}.
\end{displaymath}
Here we are defining $\gamma_{f,i,t}$ on $\bW$, which is sufficient since $\bV$ is a free $P$-algebra. We regard this as a 1-parameter family in $\fM$ (as $t$ varies).

\begin{proposition}\label{derivativerep}
Let $M$ be a polynomial $\fM$-module and let $m \in M$. Let $f$, $i$, and $t$ be as above. Then there exist unique elements $m_0, \ldots, m_n \in M$ such that
\addtocounter{equation}{-1}
\begin{subequations}
\begin{equation}
  \gamma_{f,i,t}(m) = \sum_{j=0}^n m_j t^j \label{derivative}
\end{equation}
\end{subequations}
for all $t \in \bk$. Moreover, $M$ carries a unique representation of $\fh$ satisfying $f \del_{x_i} \cdot m = m_1$, and this representation is polynomial.
\end{proposition}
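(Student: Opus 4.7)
The plan is to bootstrap from the free tensor powers $\bV^{\otimes n}$ to arbitrary polynomial $\fM$-modules via a Vandermonde argument over the infinite field $\bk$, and then inherit the $\fh$-module structure from the standard action on the free modules.

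First I would verify (\ref{derivative}) directly when $M = \bV^{\otimes n}$. The endomorphism $\gamma_{f,i,t}$ is the unique $P$-algebra endomorphism of $\bV$ sending $x_i \mapsto x_i + tf$ and fixing the other generators; since any $p \in \bV$ involves only finitely many generators and finitely many operations, $\gamma_{f,i,t}(p)$ is polynomial in $t$ of bounded degree. By the universal property of derivations on a free $P$-algebra, the coefficient of $t$ is precisely $(f\del_{x_i})(p)$, i.e., the unique derivation of $\bV$ extending the linear map $x_i \mapsto f$, $x_j \mapsto 0$ for $j \ne i$. Since the $\fM$-action on $\bV^{\otimes n}$ is diagonal and the $\fh$-action is by the Leibniz rule, the linear coefficient on a pure tensor is exactly the standard $\fh$-action of $f\del_{x_i}$. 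This extends at once to any direct sum $X = \bigoplus_\alpha \bV^{\otimes m_\alpha}$.

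Next I would treat a general polynomial $M = A/B$ with $B \subset A \subset X$ of the above form. Lifting $m$ to some $\tilde m \in A$, the previous step gives $\gamma_{f,i,t}(\tilde m) = \sum_{j=0}^n \tilde m_j t^j$ in $X$ for all $t \in \bk$. Since $\bk$ is infinite, evaluating at any $n+1$ distinct scalars $t_0,\ldots,t_n$ and inverting the Vandermonde matrix expresses each $\tilde m_j$ as a $\bk$-linear combination of the values $\gamma_{f,i,t_k}(\tilde m) \in A$, so $\tilde m_j \in A$; applied instead with $\tilde m$ replaced by an element of $B$, the same argument shows that all the $\tilde m_j$ then lie in $B$. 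Hence the classes $m_j := \tilde m_j + B \in M$ are independent of the chosen lift and satisfy (\ref{derivative}), and the uniqueness clause follows by the same Vandermonde reasoning.

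Finally I would verify the $\fh$-structure. By the previous paragraph applied to $A$ and $B$ inside $X$, both are stable under each $f\del_{x_i}$, so $M = A/B$ inherits the restriction of the standard $\fh$-action on $X$, and by construction this inherited action sends $m$ to $m_1$. Linearity of the assignment $f\del_{x_i} \mapsto (m \mapsto m_1)$ in $f\del_{x_i}$ and the Lie bracket relations are then automatic from the corresponding properties on $X$; polynomiality of $M$ as an $\fh$-module is immediate since $M$ is a subquotient of $X$ in $\Rep(\fh)$; and uniqueness of the $\fh$-action is immediate since the elements $f\del_{x_i}$ span $\fh$. The main obstacle to watch is the well-definedness of the coefficients $m_j$ after passing to the quotient by $B$, which is the content of the Vandermonde step and the only place the characteristic-zero hypothesis (via $\bk$ being infinite) is used.
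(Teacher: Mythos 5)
Your proposal is correct and follows essentially the same route as the paper's proof: reduce to the free case $\bigoplus_\alpha \bV^{\otimes m_\alpha}$ where the claim is checked directly on pure tensors and operations, then descend to submodules and quotients using the infiniteness of $\bk$ (your explicit Vandermonde step is exactly what the paper's terser phrase ``because $\bk$ is infinite and $\gamma_{f,i,t}(m) \in M$'' is doing), and finally observe that the $\fh$-action inherited from the free case is well-defined and polynomial on the subquotient. The only presentational difference is that the paper works top-down from $M = A/B$ to the free case while you build bottom-up; the content is the same.
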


\begin{proof}
Because $\bk$ is infinite, the $m_i$ are unique if they exist,  so we show existence.  

If $M \subseteq N$ and there are $m_i \in N$  satisfying \eqref{derivative} then $m_i \in M$,  because $\bk$ is infinite and $\gamma_{f,i,t}(m) \in M$.

Letting $M = A/B$  for $B \subseteq A \subseteq \bigoplus_{i}  \bV^{\otimes d_i}$,  we may reduce first to the case $M = A$,  and then $M = \bigoplus_{i} \bV^{\otimes d_i}$ by the above.  By linearity of \eqref{derivative}, we may reduce to the case where $m \in \bV^{\otimes d}$ is a simple tensor of generators of $\bV$.  Multiplying terms, we reduce to the case where $m = p(x_{i_1}, \dots, x_{i_r}) \in \bV$  for $p \in P([r])$,  for which it follows by multilinearity.  

To check that $f \del_{x_i} \cdot m = m_1$ defines a $\fh$ representation, we may again reduce to $\bV^{\otimes d}$.  In this case, the proposed action of $f \del_{x_i}$ agrees with the standard one,  hence defines a representation.   Since $M$ is a subquotient of a sum of powers of $\bV$, the representation is polynomial.
\end{proof}

By the above proposition, we have a functor
\begin{equation} \label{eq:fM-fg}
\Rep^{\pol}(\fM) \to \Rep^{\pol}(\fh).
\end{equation}
We can now take the next step in the proof of the theorem:

\begin{proposition} \label{prop:fM-fg}
The functor \eqref{eq:fM-fg} is an equivalence.
\end{proposition}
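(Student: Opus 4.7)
My approach is to mimic the proof that $\Rep^{\pol}(\fh) \simeq \Rep(\cW^{\op})$: I will show that the tensor powers $\bV^{\otimes n}$ form a family of compact projective generators of $\Rep^{\pol}(\fM)$ whose endomorphism category is again equivalent to $\cW$, so that Morita theory yields an equivalence $\Rep^{\pol}(\fM) \simeq \Rep(\cW^{\op})$. Since $F$ sends the $\fM$-module $\bV^{\otimes n}$ to the $\fh$-module $\bV^{\otimes n}$ (same underlying vector space) and matches the $\cW$-identifications on both sides, the composition $\Rep^{\pol}(\fM) \simeq \Rep(\cW^{\op}) \simeq \Rep^{\pol}(\fh)$ agrees with $F$ on the full subcategory of generators and hence, by Morita uniqueness, globally. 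This reduces the problem to establishing $\fM$-analogs of Propositions~\ref{prop:generators}, \ref{prop:homg}, and \ref{prop:gproj}.

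For the analog of Proposition~\ref{prop:generators}, the key point is that $\bV^{\otimes n} = \fM \cdot \bW^{\otimes n}$: given any pure tensor $f_1 \otimes \cdots \otimes f_n \in \bV^{\otimes n}$, choose distinct standard generators $x_{j_1}, \ldots, x_{j_n}$ of $\bW$ and an algebra endomorphism $g \in \fM$ with $g(x_{j_k}) = f_k$, so that $g$ acts diagonally on tensor powers to send $x_{j_1} \otimes \cdots \otimes x_{j_n} \in \bW^{\otimes n}$ to the given pure tensor. For the analog of Proposition~\ref{prop:homg}, naturality of the $\cW$-action on tensor powers of any $P$-algebra implies it commutes with the $\fM$-action, giving a map $\cW([n],[m]) \to \Hom_{\fM}(\bV^{\otimes n}, \bV^{\otimes m})$; composing with the injection $\Hom_{\fM}(\bV^{\otimes n}, \bV^{\otimes m}) \hookrightarrow \Hom_{\fh}(\bV^{\otimes n}, \bV^{\otimes m})$ induced by $F$ (injective since $F$ preserves underlying linear maps) recovers the isomorphism of Proposition~\ref{prop:homg}, so both factor maps are bijections.

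The main step is the $\fM$-analog of Proposition~\ref{prop:gproj}: for any polynomial $\fM$-module $M$, the restriction $\Hom_{\fM}(\bV^{\otimes n}, M) \to \Hom_{\fgl}(\bW^{\otimes n}, M)$ is an isomorphism. Injectivity is immediate from $\bV^{\otimes n} = \fM \cdot \bW^{\otimes n}$. For surjectivity, write $M = A/B$ with $B \subseteq A \subseteq X := \bigoplus_i \bV^{\otimes d_i}$ as $\fM$-modules, lift a given $\fgl$-equivariant $\phi$ to $\psi\colon\bW^{\otimes n} \to A$ using semisimplicity of polynomial $\fgl$-representations, and then apply Proposition~\ref{prop:Wpdef} to promote $\psi$ (composed with $A \hookrightarrow X$) to an $\fM$-equivariant extension $\tilde\psi \colon \bV^{\otimes n} \to X$ realized via the $\cW$-action. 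The combination of $\fM$-equivariance, $\tilde\psi(\bW^{\otimes n}) \subseteq A$, and $\bV^{\otimes n} = \fM \cdot \bW^{\otimes n}$ forces $\tilde\psi(\bV^{\otimes n}) \subseteq A$; post-composing with $A \twoheadrightarrow M$ yields the extension. Since polynomial $\fM$-modules are polynomial $\fgl$-modules (via $\GL \subset \fM$) and hence $\fgl$-semisimple, $\Hom_{\fgl}(\bW^{\otimes n},-)$ is exact, so $\bV^{\otimes n}$ is projective; generation follows because any element lies in a $\fgl$-subrepresentation admitting a $\fgl$-map from some $\bW^{\otimes n}$ hitting it, which extends to an $\fM$-map via the surjectivity just proved.

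The main obstacle I expect is this last analog of Proposition~\ref{prop:gproj}: producing an $\fM$-equivariant extension from a $\fgl$-equivariant map requires exploiting the naturality of the $\cW$-action (which a priori produces only $\fgl$-equivariant maps via Proposition~\ref{prop:Wpdef}) together with the generation statement, and one must carefully check that the extension lands in the prescribed $\fM$-submodule $A$ rather than escaping into the ambient $X$.
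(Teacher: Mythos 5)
Your proof is correct and follows the same overall strategy as the paper: show that the $\bV^{\otimes n}$ are compact projective generators of $\Rep^{\pol}(\fM)$ with endomorphism category $\cW$, and conclude via Morita theory, identifying the resulting equivalence with the differentiation functor since they agree on the projective generators. The one place you diverge is the generation step: you prove directly that $\bW^{\otimes n}$ generates $\bV^{\otimes n}$ over $\fM$ by observing that every pure tensor is in the $\fM$-orbit of a monomial in $\bW^{\otimes n}$ (any assignment $x_{j_k} \mapsto f_k$ extends to an endomorphism of the free $P$-algebra $\bV$), whereas the paper deduces it from Proposition~\ref{prop:generators} using the fact that $\fM$-submodules of a polynomial $\fM$-module are automatically $\fh$-submodules under differentiation. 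Your version is slightly more self-contained, avoiding the appeal back to the Lie algebra side; the paper's version makes transparent why the differentiation functor is faithful. Your spelled-out analog of Proposition~\ref{prop:gproj} — lift the $\fgl$-map to a summand of tensor powers via semisimplicity, extend uniquely to an $\fM$-map by Proposition~\ref{prop:Wpdef} and naturality, and use generation to force the image into $A$ — is precisely the argument the paper compresses into ``the reasoning used in Proposition~\ref{prop:gproj} now applies equally well to $\fM$.''
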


\begin{proof}
Let $M$ be a polynomial $\fM$-module. From the construction of the functor, any $\fM$-submodule of $M$ is also an $\fh$-submodule. Since $\bW^{\otimes n}$ generates $\bV^{\otimes n}$ as an $\fh$-module (Proposition~\ref{prop:generators}), it also generates as an $\fM$-module. Given a map $\phi \colon [n] \to [m]$ in $\cW$, the induced map $\phi_* \colon \bV^{\otimes n} \to \bV^{\otimes m}$ commutes with $\fM$. We thus have maps
\begin{displaymath}
\Hom_{\cW}([n], [m]) \to \Hom_{\fM}(\bV^{\otimes n}, \bV^{\otimes m}) \to \Hom_{\fh}(\bV^{\otimes n}, \bV^{\otimes m}).
\end{displaymath}
We have already seen that the composition is an isomorphism (Proposition~\ref{prop:homg}). Since the second map is obviously injective, it is thus also an isomorphism. The reasoning used in Proposition~\ref{prop:gproj} now applies equally well to $\fM$, and we find that $\bV^{\otimes n}$ is projective in $\Rep^{\pol}(\fM)$. The functor in question induces an equivalence between the full subcategories spanned by the modules $\bV^{\otimes n}$. Since these are compact projective generators in each category, it follows that the functor is an equivalence.
\end{proof}

\subsection{Representations of $\cC$}

The following proposition completes the proof of Theorem~\ref{thm:equiv}.

\begin{proposition} \label{prop:C-equiv}
We have a natural equivalence $\Rep^{\pol}(\cC)=\Rep^{\pol}(\fM)$.
\end{proposition}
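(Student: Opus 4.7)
I plan to prove this via Morita theory, mirroring the strategy of Propositions~\ref{prop:gproj} and~\ref{prop:fM-fg}. Let $T$ denote the tautological $\cC$-module, $T(\bV_k)=\bV_k$. The plan is to show that the tensor powers $T^{\otimes n}$ are compact projective generators of $\Rep^{\pol}(\cC)$ whose $\Hom$-category is equivalent to $\cW$; combined with the equivalence $\Rep^{\pol}(\fM)\simeq\Rep(\cW^{\op})$ produced by Proposition~\ref{prop:fM-fg}, Morita theory then yields $\Rep^{\pol}(\cC)\simeq\Rep^{\pol}(\fM)$.

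For the $\Hom$ computation, a natural transformation $\alpha\colon T^{\otimes n}\to T^{\otimes m}$ is determined by $b:=\alpha_n(x_1\otimes\cdots\otimes x_n)\in\bV_n^{\otimes m}$, since naturality under the $\cC$-morphisms $\phi_{(f_i)}\colon\bV_n\to\bV_k$ sending $x_i\mapsto f_i$ forces $\alpha_k(f_1\otimes\cdots\otimes f_n)=\phi_{(f_i)}^{\otimes m}(b)$. For this prescription to extend to a well-defined linear map $\bV_k^{\otimes n}\to\bV_k^{\otimes m}$, the right-hand side must be multilinear in the $f_i$, which is equivalent to $b$ lying in the $(1,1,\dots,1)$-multiweight subspace of $\bV_n^{\otimes m}$ for the natural torus of coordinate scalings on $\bV_n$. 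A direct computation in the spirit of Proposition~\ref{prop:Wpdef} identifies this multilinear part with $\bigoplus_{[n]=a_1\sqcup\cdots\sqcup a_m}P(a_1)\otimes\cdots\otimes P(a_m)=\Hom_\cW([n],[m])$, while naturality in $k$ follows automatically from the associativity of composition in $\cC$.

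For projectivity, given a surjection $\pi\colon A\twoheadrightarrow B$ in $\Rep^{\pol}(\cC)$ and $\alpha\colon T^{\otimes n}\to B$, the element $b=\alpha_n(x_1\otimes\cdots\otimes x_n)\in B(\bV_n)$ lies in the multilinear part by the above. Both $A(\bV_n)$ and $B(\bV_n)$ restrict to polynomial, hence semisimple, representations of the coordinate-scaling torus (each is a subquotient of $\bigoplus\bV_n^{\otimes d_i}$ as a torus representation), so I may lift $b$ to an element $\tilde b$ in the multilinear part of $A(\bV_n)$. The formula $\tilde\alpha_k(f_1\otimes\cdots\otimes f_n):=A(\phi_{(f_i)})(\tilde b)$ then defines a natural transformation $T^{\otimes n}\to A$ lifting $\alpha$, by the same well-definedness argument as in the Hom computation. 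The $T^{\otimes n}$ are generators of $\Rep^{\pol}(\cC)$ by definition, and each is compact since it is generated by the single element $x_1\otimes\cdots\otimes x_n$ (any pure tensor $f_1\otimes\cdots\otimes f_n\in\bV_k^{\otimes n}$ is its image under $T^{\otimes n}(\phi_{(f_i)})$). Morita theory then delivers the desired equivalence.

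The main obstacle is the precise matching between multilinearity of $\phi_{(f_i)}^{\otimes m}(b)$ in the $f_i$ and the $(1,\dots,1)$-multiweight condition on $b$, which underlies both the $\Hom$ identification and the lifting step. This rests on showing that for any polynomial $\cC$-module $M$ and any $\tilde b\in M(\bV_n)_{(1,\dots,1)}$, the assignment $\phi_{(f_i)}\mapsto M(\phi_{(f_i)})(\tilde b)$ depends multilinearly on $(f_1,\dots,f_n)$; one verifies this first for $M=T^{\otimes d}$ (where $M(\phi)=\phi^{\otimes d}$) and then propagates through subquotients via torus-semisimplicity, analogous to how Proposition~\ref{prop:gproj} exploits $\fgl$-semisimplicity to lift morphisms to free generators.
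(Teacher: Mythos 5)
Your proof is correct, but it takes a genuinely different route from the paper's. The paper constructs an explicit colimit functor $\Phi(M)=\varinjlim_r M(\bV_r)$, proves it is exact and faithful (via the split monomorphisms $\bV_r\to\bV_{r+1}$), and uses faithfulness together with the already-established isomorphism $\Hom_{\cW}([n],[m])\cong\Hom_{\fM}(\bV^{\otimes n},\bV^{\otimes m})$ to deduce that $\Hom_{\cC}(T_n,T_m)\cong\Hom_{\cW}([n],[m])$; it then invokes ``the same argument as previous cases'' to get projectivity of the $T_n$'s and concludes by Morita theory. You bypass $\Phi$ entirely and compute $\Hom_{\cC}(T^{\otimes n},-)$ directly, identifying it with the $(1,\dots,1)$-multiweight space of $M(\bV_n)$ under the coordinate-scaling torus; the Hom calculation, the lifting step (projectivity), and compactness then all fall out of semisimplicity of polynomial torus representations. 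What the paper's route buys is the explicit functor realizing the equivalence, which is what the subsequent remark on right-exact finitary functors $\Alg_P\to\Vec$ leans on. What your route buys is a self-contained account: in particular you make precise what the paper's phrase ``the same argument from previous cases'' amounts to in the $\cC$-setting, where there is no obvious analogue of the subspace $\bW^{\otimes n}\subset\bV^{\otimes n}$ — your answer, that $M(\bV_n)_{(1,\dots,1)}$ plays the role of $\Hom_{\fgl}(\bW^{\otimes n},M)$, is exactly the right one. One minor point worth adding to your write-up: after identifying $\Hom_{\cC}(T^{\otimes n},T^{\otimes m})$ with $\Hom_{\cW}([n],[m])$ as vector spaces, you should verify the identification respects composition (i.e., that $[n]\mapsto T^{\otimes n}$ is a functor $\cW\to\Rep^{\pol}(\cC)$ inducing the isomorphism); this is routine from the definition of $\phi_*$ but is needed for Morita theory to apply.
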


\begin{proof}
Let $T_n$ be the $\cC$-module given by $T_n(\bV_r)=\bV_r^{\otimes d}$. Thus a $\cC$-module is polynomial if and only if it is a subquotient of a sum of $T_n$'s. For a $\cC$-module $M$, define $\Phi(M)=\varinjlim M(\bV_r)$. Here the direct limit is taken with respect to the standard inclusions $\bV_r \to \bV_{r+1}$ (which, in turn, are induced by the standard inclusions $\bW_r \to \bW_{r+1}$). The functor $\Phi$ is exact, cocontinuous, and satisfies $\Phi(T_n)=\bV^{\otimes n}$. It follows that $\Phi$ induces a functor
\begin{displaymath}
\Phi \colon \Rep^{\pol}(\cC) \to \Rep^{\pol}(\fM).
\end{displaymath}
We show that this is an equivalence.

We first claim that $\Phi$ is faithful. Let $f \colon M \to N$ be a map of $\cC$-modules such that $\Phi(f)=0$. The inclusion $\bV_r \to \bV_{r+1}$ is a split monomorphism in $\cC$ since the standard projection $\bW_{r+1} \to \bW_r$ induces a left inverse $\bV_{r+1} \to \bV_r$. It follows that the map $M(\bV_r) \to M(\bV_{r+1})$ is injective, and similarly for $N$. This implies that $f \colon M(\bV_r) \to N(\bV_r)$ vanishes for all $r$, and so $f=0$. Thus $\Phi$ is faithful.

Suppose $\phi \colon [n] \to [m]$ is a morphism in $\cW$. Then $\phi_* \colon T_n \to T_m$ is a map of $\cC$-modules; applying $\Phi$ to this yields the map $\phi_* \colon \bV^{\otimes n} \to \bV^{\otimes m}$. We thus have maps
\begin{displaymath}
\Hom_{\cW}([n], [m]) \to \Hom_{\cC}(T_n, T_m) \to \Hom_{\fM}(\bV^{\otimes n}, \bV^{\otimes m}),
\end{displaymath}
where the composition is the natural map, which we have already seen is an isomorphism (in the proof of Proposition~\ref{prop:fM-fg}). Since $\Phi$ is faithful, the second map is injective. It thus follows that both maps above are isomorphisms.

We can now apply the same argument from previous cases to see that the $T_n$'s are projective generators of $\Rep^{\pol}(\cC)$. Since $\Phi$ induces an equivalence of categories of compact projective generators, it follows that $\Phi$ is an equivalence, which completes the proof.
\end{proof}

\begin{remark}
The above theorem can be extended to $\Alg_P$, and the subcategory $\Alg_P^{\rm fp}$ of finitely presented algebras as follows. We say that a $\Alg_P$-module $F$ is \defn{right exact} if it takes coequalizers to right-exact sequences, and that it is \defn{finitary} if it preserves filtered colimits. The following categories are equivalent:
\begin{enumerate}
\item the category of right exact, finitary functors $\Alg_P \to \Vec$.
\item the category of right exact functors  $\Alg_P^{\rm fp} \to \Vec$.
\item the category of all functors $\cC \to \Vec$.
\end{enumerate}
This is a standard result in categorical universal algebra (see \cite[Theorem 7.3]{ARV}). These equivalences induces equivalences of the categories of polynomial representations.
\end{remark}

\subsection{Some additional results}

In the course of proving Theorem~\ref{thm:equiv}, we showed that $\bV^{\otimes n}$ is a projective object of $\Rep^{\pol}(\fh)$ (Proposition~\ref{prop:gproj}). In fact, we can classify the projectives and the simples completely (at least in some cases), as the next two results show. We let $\bS_{\lambda}$ denote the Schur functor corresponding to the partition $\lambda$.

\begin{proposition} \label{prop:gproj2}
For a polynomial $\fh$-module $M$ and a partition $\lambda$, the restriction map
\begin{displaymath}
\Hom_{\fh}(\bS_{\lambda}(\bV), M) \to \Hom_{\fgl}(\bS_{\lambda}(\bW), M)
\end{displaymath}
is an isomorphism. Thus $\bS_{\lambda}(\bV)$ is a projective object of $\Rep^{\pol}(\fh)$.
\end{proposition}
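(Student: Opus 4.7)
The plan is to deduce this from Proposition~\ref{prop:gproj} by realizing $\bS_\lambda$ as a direct summand of the $n$-th tensor power functor, where $n = |\lambda|$. Concretely, in characteristic zero one has a Young symmetrizer, i.e.\ an idempotent $e_\lambda \in \bk[\fS_n]$, such that $\bS_\lambda(V) = e_\lambda V^{\otimes n}$ for every vector space $V$. Applied to $\bV$ and $\bW$, this presents $\bS_\lambda(\bV)$ as a direct summand of $\bV^{\otimes n}$ and $\bS_\lambda(\bW)$ as a direct summand of $\bW^{\otimes n}$.

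The first step is to observe that the $\fS_n$-action permuting tensor factors on $\bV^{\otimes n}$ commutes with the diagonal action of $\fh$ (since elements of $\fh$ act by derivations, which distribute over tensor factors in a manner invariant under relabelling); likewise on $\bW^{\otimes n}$ the $\fS_n$-action commutes with $\fgl$. Hence the idempotent $e_\lambda$ cuts out $\bS_\lambda(\bV)$ as an $\fh$-submodule summand of $\bV^{\otimes n}$, and $\bS_\lambda(\bW)$ as a $\fgl$-submodule summand of $\bW^{\otimes n}$. Moreover, the inclusion $\bW^{\otimes n} \subset \bV^{\otimes n}$ is $\fS_n$-equivariant, so it restricts to an inclusion $\bS_\lambda(\bW) \subset \bS_\lambda(\bV)$.

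The second step is to note that the restriction map of Proposition~\ref{prop:gproj} is $\fS_n$-equivariant for the $\fS_n$-actions on the Hom-sources induced by the permutation action on $\bV^{\otimes n}$ and $\bW^{\otimes n}$. Pre-composition with $e_\lambda$ gives an idempotent on each Hom space, whose image is precisely $\Hom_{\fh}(\bS_\lambda(\bV), M)$ on one side and $\Hom_{\fgl}(\bS_\lambda(\bW), M)$ on the other. Because the restriction map is an $\fS_n$-equivariant isomorphism, it identifies these images, which yields the desired isomorphism. Projectivity of $\bS_\lambda(\bV)$ is then immediate: it is a direct summand, in $\Rep^{\pol}(\fh)$, of the projective object $\bV^{\otimes n}$ (Proposition~\ref{prop:gproj}).

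I do not anticipate any substantive obstacle; the only mild subtlety is ensuring the bookkeeping of left/right conventions for $e_\lambda$, but as we only need the idempotent to act $\fh$-equivariantly on $\bV^{\otimes n}$ (and similarly for $\fgl$), the argument is insensitive to this choice.
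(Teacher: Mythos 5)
Your argument is correct and is exactly the paper's approach: the paper's one-line proof says the proposition "follows from Proposition~\ref{prop:gproj} by decomposing under the action of the symmetric group $\fS_n$," and your Young-symmetrizer argument is simply a careful spelling-out of that decomposition.
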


\begin{proof}
This follows from Proposition~\ref{prop:gproj} by decomposing under the action of the symmetric group $\fS_n$.
\end{proof}

\begin{proposition} \label{prop:class-proj}
Suppose that $P([0])=0$ and $P([1])=\bk$. Then:
\begin{enumerate}
\item The simple objects of $\Rep^{\pol}(\fh)$ are the representations $\bS_{\lambda}(\bW)$.
\item The indecomposable projectives of $\Rep^{\pol}(\fh)$ are the representations $\bS_{\lambda}(\bV)$.
\end{enumerate}
\end{proposition}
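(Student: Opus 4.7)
The plan is to transport the question across the equivalence $\Rep^{\pol}(\fh) \simeq \Rep(\cW^{\op})$ of Theorem~\ref{thm:equiv} and solve it inside $\Rep(\cW^{\op})$, which becomes especially tractable under the stated hypotheses. The key preliminary observation is that $P([0])=0$ forces every morphism in $\cW$ to have non-empty fibers (i.e., to be a surjection), while $P([1])=\bk$ makes the contribution of each singleton fiber trivial. Consequently $\cW([n],[m])=0$ whenever $m>n$, and $\End_{\cW}([n]) = \bk[\fS_n]$. In particular $\cW^{\op}$ is a ``directed'' $\bk$-linear category whose diagonal endomorphism algebras are the semisimple group algebras $\bk[\fS_n]$.

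Next I would classify the simple $\cW^{\op}$-modules. Let $L$ be simple and let $n$ be minimal with $L([n]) \ne 0$. Because the structure maps in a $\cW^{\op}$-module only move elements from lower to higher degree (by the directedness above), the sub-functor $L^{>n}$ defined by $L^{>n}([m]) = L([m])$ for $m>n$ and $L^{>n}([m])=0$ otherwise is a subobject of $L$. It is proper since $L([n]) \ne 0$, so by simplicity $L^{>n}=0$, i.e., $L$ is concentrated at $[n]$. Any $\fS_n$-submodule of $L([n])$ then extends to a subobject of $L$ (still concentrated at $[n]$), so $L([n])$ must be a simple $\fS_n$-module, hence a Specht module $M^\lambda$ for some partition $\lambda \vdash n$; call the resulting simple $L_\lambda$. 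These exhaust the simples. To match $L_\lambda$ with $\bS_\lambda(\bW)$ under the equivalence, I would first note that $P([0])=0$ makes $\fh$ non-negatively graded, so the part of $\bV^{\otimes n}$ of total degree $>n$ is an $\fh$-submodule with quotient $\bW^{\otimes n}$ (on which $\fh_{>0}$ acts by zero); the Young symmetrizer then endows $\bS_\lambda(\bW)$ with a polynomial $\fh$-module structure. The equivalence sends a polynomial $\fh$-module $M$ to the $\cW^{\op}$-module $[m] \mapsto \Hom_{\fgl}(\bW^{\otimes m}, M)$, and classical Schur--Weyl duality gives $\Hom_{\fgl}(\bW^{\otimes m}, \bS_\lambda(\bW)) \cong M^\lambda$ if $m = |\lambda|$ and $0$ otherwise. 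So $\bS_\lambda(\bW) \leftrightarrow L_\lambda$, establishing (a).

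For (b), the principal projectives $P_n = \cW(-,[n])$ form a set of projective generators of $\Rep(\cW^{\op})$ and correspond under the equivalence to the objects $\bV^{\otimes n}$ (as in the proof of Theorem~\ref{thm:equiv}). Since $\End(P_n) = \bk[\fS_n]$ is semisimple, choosing a primitive idempotent $e_\lambda$ for each $\lambda \vdash n$ yields indecomposable projective summands $P_\lambda = e_\lambda P_n$ of $P_n$, each of which has $L_\lambda$ as its unique simple quotient and is therefore the projective cover of $L_\lambda$; every indecomposable projective of $\Rep(\cW^{\op})$ then arises this way. Under the equivalence the idempotent $e_\lambda$ acts on $\bV^{\otimes n}$ by permutation of tensor factors, and its image is by definition the Schur functor $\bS_\lambda(\bV)$; hence $P_\lambda \leftrightarrow \bS_\lambda(\bV)$, which proves (b). The main obstacle is the classification of simples in $\Rep(\cW^{\op})$ --- specifically, showing that a simple module must be concentrated in a single degree --- and this is precisely where the hypothesis $P([0])=0$ is used in an essential way.
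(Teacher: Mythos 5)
Your proposal is correct, but it travels a genuinely different path from the paper's. The paper argues entirely on the $\fh$ side: from the identification $\fh_n=\Hom(\bW_n,P\{\bW_n\})$ and the hypotheses $P([0])=0$, $P([1])=\bk$, it deduces that $\fh$ is non-negatively graded with $\fh_0=\fgl$, so that for any polynomial $\fh$-module $M$ the subspace of degree $>d$ is a submodule; a simple module is then concentrated in a single degree, its $\fh$-action factors through $\fgl$, and the classification of simple polynomial $\fgl$-modules finishes (a). For (b) the paper simply invokes Proposition~\ref{prop:gproj2} to compute $\End_{\fh}(\bS_{\lambda}(\bV))=\Hom_{\fgl}(\bS_{\lambda}(\bW),\bS_{\lambda}(\bV))=\bk$, concluding that $\bS_{\lambda}(\bV)$ is an indecomposable projective with top $\bS_{\lambda}(\bW)$. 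You instead move across the equivalence $\Rep^{\pol}(\fh)\simeq\Rep(\cW^{\op})$ and exploit that $P([0])=0$, $P([1])=\bk$ make $\cW$ ``directed'' with $\End_{\cW}([n])=\bk[\fS_n]$, classify simples and indecomposable projectives of $\Rep(\cW^{\op})$ by hand, and then match them up via $M\mapsto\Hom_{\fgl}(\bW^{\otimes\bullet},M)$ and Schur--Weyl. The two routes are of course rooted in the same phenomenon (positive grading of $\fh$ is exactly directedness of $\cW$), but your version makes the reduction to symmetric-group representation theory fully explicit, at the cost of routing the argument through the equivalence of Theorem~\ref{thm:equiv}, whereas the paper's version is more compressed and stays internal to $\fh$, needing only the projectivity statement of Proposition~\ref{prop:gproj2}. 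One small point worth spelling out in your version of (b): you should note that a primitive idempotent $e_\lambda\in\bk[\fS_n]$ satisfies $e_\lambda\bk[\fS_n]e_\lambda\cong\bk$ (which holds because $\bk[\fS_n]$ is split semisimple over any field of characteristic zero), since that is what gives $\End(P_\lambda)=\bk$ and hence indecomposability, and you should also invoke a Krull--Schmidt-type argument to conclude that \emph{every} indecomposable projective is a summand of some $P_n$; the paper sidesteps both points by identifying the top of $\bS_{\lambda}(\bV)$ directly and appealing to the classification of simples.
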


\begin{proof}
We have an identification $\fh_n = \Hom(\bW_n,P\{\bW_n\})$ (see \S \ref{ss:deriv}), and hence by our assumptions on $P$, the algebra $\fh_n$ is concentrated in non-negative degrees and its degree $0$ component is $\fgl_n$. It follows that $\fh$ is concentrated in non-negative degrees and its degree $0$ component is $\fgl$. Thus the elements of $\fh$ of strictly positive degree form an ideal with quotient $\fgl$. 

Let $M$ be a polynomial $\fh$ representation, and for $d \in \bN$, let $M_d \subseteq M$ be the minimal degree $\fgl$-subrepresentation. Since $P([0]) = 0$, the subspace of elements of degree $> d$ is a subrepresentation of $M$, and hence $M$ surjects onto $M_d$. If $M$ is simple, it follows that $M = M_d$ and the action of $\fh$ on $M_d$ factors through $\fgl$. Hence by the classification of simple $\fgl$ modules, $M = \bS_{\lambda}(\bW)$ for some partition $\lambda$. (Here $\fh$ acts on $M$ through the quotient homomorphism).

From our hypotheses, it follows that the degree $|\lambda|$ component of $\bS_{\lambda}(\bV)$ is $\bS_{\lambda}(\bW)$. Thus Proposition~\ref{prop:gproj2} yields $\End_{\fh}(\bS_{\lambda}(\bV)) = \bk$, and so $\bS_{\lambda}(\bV)$ is indecomposable with unique simple quotient $\bS_{\lambda}(\bW)$. Since every simple representation is of the form $\bS_{\lambda}(\bW)$, every indecomposable projective is isomorphic to $\bS_{\lambda}(\bV)$ for some $\lambda$.
\end{proof}

\begin{remark}
The above hypotheses are met if $P=\Com^{\rn\ru}$, but not if $P=\Com$.
\end{remark}

\subsection{An example} \label{ss:kahler}

Let $P=\Com$ be the unital commutative operad. Then $\cC$ is the category of (commutative) polynomial algebras in finitely many variables and $\cW$ is the (linearization of) $\Fin$; see \S \ref{sss:com}.

Consider the $\cC$-module $M$ given by $M(R)=\Omega^1_{R/\bk}$, which takes a $\bk$-algebra to its space of K\"ahler forms. The module $M$ is polynomial, since the definition of K\"ahler forms using the ideal of the diagonal realizes $M$ as a subquotient of the module $T_2$ defined in the proof of Proposition~\ref{prop:C-equiv}.

Let $N$ be the $\Fin^{\op}$-module corresponding to $M$ under the equivalence in Theorem~\ref{thm:equiv}. We claim that $N(S)=\bk[S]^*$. To see this, notice that the $1^n$ weight space of $\Omega^1_{\bk[x_1, \dots, x_n]}$ is generated by $\{\omega_j \}_{j = 1}^n$,  where $\omega_j := (\prod_{i \in [n] \setminus \{j\}} x_i )dx_j$.  A function $f\colon [m] \to [n]$ acts by
\begin{displaymath}
f^* \omega_j = \left(\prod_{a \in f\inv([m] \setminus \{j\})} x_a \right) d(\prod_{b \in f\inv(j)} x_b) = \sum_{b \in f\inv(j)} \omega_b.
\end{displaymath}
This matches with the action of $f^* \colon (\bk^n)^* \to (\bk^m)^*$ in the standard basis.

A similar analysis shows that the $\cC$-module $R \mapsto \Omega^d_{R/\bk}$ corresponds to the dual of $X \mapsto \lw^d(\bk[X])$. The de Rham complex
\begin{displaymath}
R \to \Omega^1_R \to \Omega^2_R \to \cdots
\end{displaymath}
corresponds to the dual of the Koszul complex constructed by Wiltshire--Gordon \cite[Definition 4.22]{wiltshire-gordon}

\section{The finite dimensional setting} \label{s:findim}

In the previous section, we studied representations of the Lie algebra $\fh$, which is the direct limit of the Lie algebras $\fh_n$. We now study the $\fh_n$'s. A key tool is the \emph{specialization functor} which takes representations of $\fh$ to representations of $\fh_n$. We maintain the set-up from \S \ref{ss:setup} throughout this section.

\subsection{The specialization functor}

Let $\fgl_n \times \fgl_{\infty-n}$ be the subalgebra of $\fgl$ consisting of block diagonal matrices with one block of size $n \times n$ and the other block using the remaining rows and columns. For a polynomial $\fh$-module $M$, we define
\begin{displaymath}
\Gamma_n(M) = M^{\fgl_{\infty-n}},
\end{displaymath}
where the superscript denotes invariants. Since $\fh_n$ commutes with $\fgl_{\infty-n}$ inside of $\fh$, there is an action of $\fh_n$ on $\Gamma_n(M)$. The following proposition establishes the basic properties of this construction.

\begin{proposition}
We have the following:
\begin{enumerate}
\item The construction $\Gamma_n$ is exact, co-continuous, and symmetric monoidal.
\item We have $\Gamma_n(\bV)=\bV_n$.
\item If $M$ is a polynomial $\fh$-module then $\Gamma_n(M)$ is a polynomial $\fh_n$-module.
\end{enumerate}
\end{proposition}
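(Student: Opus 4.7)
The plan is to reduce everything to the behavior of $\Gamma_n$ on the underlying polynomial $\fgl$-representations, where invariants under $\fgl_{\infty-n}$ correspond to evaluating the associated polynomial functor at $\bW_n$. More precisely, recall from \S\ref{ss:poly} that $\Rep^{\pol}(\fgl) \simeq \Pol$ via $F \mapsto F(\bW)$. The first step is to establish the basic identification: for any polynomial functor $F$, the evaluation map induces
\begin{displaymath}
F(\bW_n) \xrightarrow{\sim} F(\bW)^{\fgl_{\infty-n}}.
\end{displaymath}
By decomposing $F$ into Schur functors, this reduces to the statement $\bS_\lambda(\bW)^{\fgl_{\infty-n}} = \bS_\lambda(\bW_n)$, which is immediate from the Littlewood--Richardson decomposition of $\bS_\lambda(\bW)$ under the subalgebra $\fgl_n \times \fgl_{\infty-n}$: only the summands indexed by $(\lambda, \emptyset)$ survive.

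With this in hand, part~(a) follows formally. Since polynomial representations of $\fgl$ are semisimple and the $\fgl_{\infty-n}$-invariants pick out a single isotypic summand (the trivial one), the functor $M \mapsto M^{\fgl_{\infty-n}}$ is exact and preserves arbitrary colimits when restricted to $\Rep^{\pol}(\fgl)$, and hence on $\Rep^{\pol}(\fh)$ (whose objects are polynomial as $\fgl$-modules). For the symmetric monoidal structure, the key identification translates the tensor product of polynomial representations into pointwise tensor product of polynomial functors, and evaluation at $\bW_n$ is clearly symmetric monoidal: $(F \otimes G)(\bW_n) = F(\bW_n) \otimes G(\bW_n)$. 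This gives a natural isomorphism $\Gamma_n(M \otimes N) \cong \Gamma_n(M) \otimes \Gamma_n(N)$ compatible with the associator and braiding.

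For part~(b), recall that $\bV = P\{\bW\}$ is exactly the polynomial functor $P\{-\}$ applied to $\bW$, so by the key identification $\Gamma_n(\bV) = P\{\bW\}^{\fgl_{\infty-n}} = P\{\bW_n\} = \bV_n$ as a $\fgl_n$-module. One then checks that the residual $\fh_n$-action agrees with the tautological one: the inclusion $\fh_n \subset \fh$ centralizes $\fgl_{\infty-n}$ (because $\fh_n$ acts on generators $x_1,\dots,x_n$ only, which lie in the fixed subspace $\bW_n$), so the $\fh_n$-action on $\bV_n = \bV^{\fgl_{\infty-n}}$ is obtained by restricting the $\fh$-action, and on the generating subspace $\bW_n$ it agrees by construction with the standard action; by multiplicativity (both $\bV$ and $\bV_n$ are free $P$-algebras, and derivations are determined on generators) the two actions agree everywhere.

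Finally, part~(c) is a formal consequence. If $M$ is a polynomial $\fh$-module, then by definition it is a subquotient of $\bigoplus_{i \in I} \bV^{\otimes d_i}$. Applying $\Gamma_n$, which is exact and cocontinuous and symmetric monoidal by~(a), we obtain that $\Gamma_n(M)$ is a subquotient of $\bigoplus_{i \in I} \Gamma_n(\bV)^{\otimes d_i} = \bigoplus_{i \in I} \bV_n^{\otimes d_i}$, hence $\Gamma_n(M)$ is a polynomial $\fh_n$-module. The only real obstacle is the key evaluation-equals-invariants lemma; once that is in place, all three statements are routine consequences, though one should take care that the $\fh_n$-action transported along the identification in~(b) really matches the standard one, which reduces to verifying agreement on the generating subspace $\bW_n \subset \bV_n$.
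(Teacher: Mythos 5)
Your argument is correct and follows essentially the same route as the paper: both rest on the observation that taking $\fgl_{\infty-n}$-invariants of a polynomial $\fgl$-representation is just evaluating the corresponding polynomial functor at $\bW_n$, from which (a) and (b) follow, and (c) is then a formal consequence. You have simply spelled out more of the details (the Schur-functor/Littlewood--Richardson reduction, the check that the transported $\fh_n$-action matches the tautological one) that the paper leaves implicit.
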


\begin{proof}
(a) This is simply a fact about $\fgl_{\infty-n}$-invariants of polynomial $\fgl$-representations: note that the construction $\Gamma_n(M)$ depends only on the $\fgl$-module structure on $M$. (From the point of view of polynomial functors, taking invariants under $\fgl_{\infty-n}$ is just evaluating on $\bW_n$.)

(b) This is clear from direct calculation.

(c) From (a) and (b), we see that $\Gamma_n(\bV^{\otimes d})=\bV_n^{\otimes d}$. Since $\Gamma_n$ is exact and co-continuous, the statement follows.
\end{proof}

We thus have a functor
\begin{displaymath}
\Gamma_n \colon \Rep^{\pol}(\fh) \to \Rep^{\pol}(\fh_n).
\end{displaymath}
We call this the \defn{specialization functor}.

\begin{remark}
Identifying $\Rep^{\pol}(\fh)$ with $\Rep^{\pol}(\cC)$, the specialization functor amounts to evaluating on $\bV_n$.
\end{remark}

\subsection{Specializations of projective modules}

Since $\bV$ specializes to $\bV_n$, it follows that $\bS_{\lambda}(\bV)$ specializes to $\bS_{\lambda}(\bV_n)$. The following proposition establishes some basic results about these modules, at least for certain $\lambda$.

\begin{proposition} \label{prop:gendeg2}
Let $\lambda$ be a partition with $\le n$ rows.
\begin{enumerate}
\item $\bS_{\lambda}(\bV_n)$ is generated as an $\fh_n$-module by $\bS_{\lambda}(\bW_n)$.
\item For a polynomial $\fh_n$-module $M$, the restriction map
\begin{displaymath}
\Hom_{\fh_n}(\bS_{\lambda}(\bV_n), M) \to \Hom_{\fgl_n}(\bS_{\lambda}(\bW_n), M)
\end{displaymath}
is an isomorphism.
\item For a polynomial $\fh$-module $N$, the natural map
\begin{displaymath}
\Gamma_n \colon \Hom_{\fh}(\bS_{\lambda}(\bV), N) \to \Hom_{\fh_n}(\bS_{\lambda}(\bV_n), \Gamma_n(N))
\end{displaymath}
is an isomorphism.
\item The object $\bS_{\lambda}(\bV_n)$ is projective. If $P([0])=0$ and $P([1])=\bk$ then it is indecomposable, and all indecomposable projectives have this form.
\item Every polynomial $\fh_n$-module is a quotient of a sum of modules of the form $\bS_{\nu}(\bV_n)$ with $\ell(\nu) \le n$.
\end{enumerate}
\end{proposition}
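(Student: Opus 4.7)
The plan is to adapt the arguments of Section~\ref{s:equiv} to the finite-dimensional setting, using the specialization functor $\Gamma_n$ as a bridge. The key structural fact is that $\bS_\lambda(\bV_n) = \Gamma_n(\bS_\lambda(\bV))$ sits inside $\bS_\lambda(\bV)$ as the $\fgl_{\infty-n}$-invariants, and the hypothesis $\ell(\lambda) \le n$ ensures $\bS_\lambda(\bW_n) = \Gamma_n(\bS_\lambda(\bW)) \ne 0$. For part~(a), I adapt the proof of Proposition~\ref{prop:generators}. Given any $m \in \bS_\lambda(\bV_n)$, Proposition~\ref{prop:gproj2} in the infinite case yields $Y \in U(\fh)$ and $v \in \bS_\lambda(\bW)$ with $m = Yv$. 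The multiplication map $U(\fh) \otimes_{\bk} \bS_\lambda(\bW) \twoheadrightarrow \bS_\lambda(\bV)$ is $\fgl$-equivariant, hence stays surjective after passing to $\fgl_{\infty-n}$-invariants (since $\fgl_{\infty-n}$ acts semisimply on polynomial modules). An analysis of $\fgl_{\infty-n}$-weights --- using that $\bS_\lambda(\bW)$ carries only non-negative weights --- allows one to reduce $m$ to the image of $U(\fh_n) \otimes \bS_\lambda(\bW_n)$, yielding $m = Y' v'$ with $Y' \in U(\fh_n)$ and $v' \in \bS_\lambda(\bW_n)$.

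Part~(b) follows the outline of Proposition~\ref{prop:gproj}: injectivity is immediate from (a), and for surjectivity one writes $M = A/B$ with $A \subseteq \bigoplus_i \bV_n^{\otimes d_i}$, lifts a given $\fgl_n$-map $\bS_\lambda(\bW_n) \to M$ to $\bS_\lambda(\bW_n) \to A$ using semisimplicity of polynomial $\fgl_n$-representations, and then extends this to an $\fh_n$-map $\bS_\lambda(\bV_n) \to A$. The extension step requires the special case $\Hom_{\fh_n}(\bS_\lambda(\bV_n), \bV_n^{\otimes d}) \cong \Hom_{\fgl_n}(\bS_\lambda(\bW_n), \bV_n^{\otimes d})$, obtained by applying $\Gamma_n$ to the isomorphism of Proposition~\ref{prop:gproj2} and using (a) to secure injectivity of the restriction map. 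Part~(c) then follows by comparing the two sides using (b): writing the semisimple polynomial $\fgl$-module $N$ as $\bigoplus_\mu \bS_\mu(\bW) \otimes U_\mu$ for multiplicity spaces $U_\mu$, we have $\Gamma_n(N) = \bigoplus_\mu \bS_\mu(\bW_n) \otimes U_\mu$, so the source (by Proposition~\ref{prop:gproj2}) and the target (by~(b)) both compute $U_\lambda$, and the natural map $\Gamma_n$ is compatible with this identification.

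Projectivity in part~(d) is immediate from (b) together with the semisimplicity of polynomial $\fgl_n$-representations. Under the hypotheses $P([0]) = 0$ and $P([1]) = \bk$, the argument of Proposition~\ref{prop:class-proj} carries over verbatim: $\fh_n$ is non-negatively graded with degree-zero part $\fgl_n$, so every simple polynomial $\fh_n$-module is some $\bS_\lambda(\bW_n)$ with $\ell(\lambda) \le n$, while (b) gives $\End_{\fh_n}(\bS_\lambda(\bV_n)) = \bk$, yielding indecomposability and identifying $\bS_\lambda(\bV_n)$ as the projective cover of $\bS_\lambda(\bW_n)$. For part~(e), each $\bV_n^{\otimes d}$ decomposes as an $(\fS_d, \fh_n)$-bimodule into $\bigoplus_{|\nu| = d,\, \ell(\nu) \le n} \bS_\nu(\bV_n)^{\oplus f_\nu}$; since the $\bV_n^{\otimes d}$'s generate $\Rep^{\pol}(\fh_n)$ by definition, the $\bS_\nu(\bV_n)$'s form a generating family of projectives by~(d), so every polynomial $\fh_n$-module is a quotient of a sum of such modules. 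The main obstacle is part~(a): the passage from $m = Yv$ with $(Y, v) \in U(\fh) \times \bS_\lambda(\bW)$ to analogous data in $U(\fh_n) \times \bS_\lambda(\bW_n)$ requires careful bookkeeping of $\fgl_{\infty-n}$-weights; once this is in hand, the rest follows the pattern of Section~\ref{s:equiv}.
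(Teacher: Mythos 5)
Parts (b), (c), and (d) of your sketch are essentially sound and in fact streamline the paper's argument a little. The paper proves (b) only for modules in the essential image of $\Gamma_n$ via a commutative-square lemma, then proves (e), then essential surjectivity of $\Gamma_n$ (Proposition~\ref{prop:ess-surj}), and only then deduces (b) in general. You observe that to run the lifting argument one only needs the $\bV_n^{\otimes d}$ case, which is already in the essential image; that shortcut is legitimate. Your (c) via multiplicity spaces and your (d) are fine. The two places where the proposal has genuine gaps are (a) and (e).

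For (a), the step ``the multiplication $U(\fh)\otimes\bS_\lambda(\bW)\onto\bS_\lambda(\bV)$ stays surjective after $\fgl_{\infty-n}$-invariants'' is not justified. Exactness of $(-)^{\fgl_{\infty-n}}$ relies on semisimplicity, which holds for \emph{polynomial} $\fgl$-modules; but $\fh\cong\Hom(\bW,\bV)\cong\bW^*\otimes\bV$ involves $\bW^*$, so $U(\fh)$ is not polynomial and the invariants functor can fail to be exact there. Even granting surjectivity, $(U(\fh)\otimes\bS_\lambda(\bW))^{\fgl_{\infty-n}}$ is strictly larger than $U(\fh_n)\otimes\bS_\lambda(\bW_n)$: it contains tensors where the two factors carry cancelling nontrivial $\fgl_{\infty-n}$-weights (possible precisely because $U(\fh)$ has negative weights). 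So one cannot deduce that a given $m$ is hit by $U(\fh_n)\otimes\bS_\lambda(\bW_n)$ this way. The idea that unlocks this in the paper's proof of Lemma~\ref{lem:gendeg2-1} is a PBW normal form: write each $Y$ as an admissible monomial $X_{f_1,i_1}\cdots X_{f_r,i_r}$ with $i_1\le\cdots\le i_r$, pick $m_0\in\bS_\lambda(\bW_n)$ a generating weight vector, and note that $\partial_{x_{i_r}}m_0=0$ whenever $i_r>n$ since the weight of $m_0$ is supported in $\{1,\dots,n\}$. Hence if $Ym_0\ne 0$ then $i_r\le n$, and by admissibility all $i_j\le n$; a second weight comparison (the weight of $Ym_0$ agrees with that of $m$, also supported in $\{1,\dots,n\}$) forces the $f_j$ to lie in $\bV_n$. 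That ordering trick is the essential missing ingredient; you correctly flagged this as the main obstacle, but the invariants approach you sketched will not close it.

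For (e), the claimed decomposition is wrong: the Schur--Weyl decomposition of $\bV_n^{\otimes d}$ as a $(\fS_d,\fh_n)$-bimodule is $\bigoplus_{\nu\vdash d}S^\nu\otimes\bS_\nu(\bV_n)$ over \emph{all} partitions of $d$, with no restriction $\ell(\nu)\le n$, because $\bV_n=P\{\bW_n\}$ is infinite dimensional and so $\bS_\nu(\bV_n)\ne 0$ even when $\ell(\nu)>n$. Your argument therefore only shows that every polynomial module is a quotient of a sum of $\bS_\nu(\bV_n)$'s with unconstrained $\nu$ --- and even that requires an extra step to pass from ``subquotient of $\bigoplus\bV_n^{\otimes d_i}$'' to ``quotient.'' Both issues are handled in the paper simultaneously by decomposing the submodule $A\subset X=\bigoplus\bS_{\mu_i}(\bV_n)$ as a $\fgl_n$-module, $A=\bigoplus_j\bS_{\nu_j}(\bW_n)$; here $\ell(\nu_j)\le n$ automatically since the other $\bS_\nu(\bW_n)$ vanish, and then part (b) applied to $X$ upgrades each inclusion $\bS_{\nu_j}(\bW_n)\hookrightarrow A$ to an $\fh_n$-map $\bS_{\nu_j}(\bV_n)\to A$, giving the required surjection. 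The constraint $\ell(\nu)\le n$ in (e) is not cosmetic: as the example following the proposition shows, $\bS_\nu(\bV_n)$ can fail to be projective and can fail to be generated in degree $|\nu|$ when $\ell(\nu)>n$, so the $\fgl_n$-decomposition rather than the $\fS_d$-decomposition is what makes the statement true.
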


In proving the above proposition, we will obtain the following additional result.

\begin{proposition} \label{prop:ess-surj}
The specialization functor $\Gamma_n$ is essentially surjective.
\end{proposition}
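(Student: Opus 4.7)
The plan is to construct, for an arbitrary polynomial $\fh_n$-module $M$, a polynomial $\fh$-module $N$ with $\Gamma_n(N) \cong M$, by lifting a two-step presentation of $M$ to $\Rep^{\pol}(\fh)$ via the modules $\bS_\lambda(\bV)$.

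First, I would apply part (e) of Proposition~\ref{prop:gendeg2} twice to produce a presentation
\begin{equation*}
Q' \xrightarrow{\psi'} P' \xrightarrow{\phi'} M \to 0
\end{equation*}
in $\Rep^{\pol}(\fh_n)$, with $P' = \bigoplus_{i \in I}\bS_{\nu_i}(\bV_n)$ and $Q' = \bigoplus_{j \in J}\bS_{\mu_j}(\bV_n)$, where every partition $\nu_i$ and $\mu_j$ has at most $n$ rows. The first application surjects such a $P'$ onto $M$; the second covers the kernel of $\phi'$, which is again a polynomial $\fh_n$-module and so falls under the hypotheses of (e).

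Next, I would set $P = \bigoplus_{i \in I}\bS_{\nu_i}(\bV)$ and $Q = \bigoplus_{j \in J}\bS_{\mu_j}(\bV)$, which are polynomial $\fh$-modules. Since $\Gamma_n$ is symmetric monoidal and satisfies $\Gamma_n(\bV) = \bV_n$, we have $\Gamma_n(\bS_\lambda(\bV)) = \bS_\lambda(\bV_n)$ for every partition $\lambda$; in particular $\Gamma_n(P) = P'$ and $\Gamma_n(Q) = Q'$. The main step is then lifting $\psi'$ to a map $\psi\colon Q \to P$ of $\fh$-modules with $\Gamma_n(\psi) = \psi'$. This is precisely what part (c) of Proposition~\ref{prop:gendeg2} provides, applied one summand at a time: for each $j \in J$ the composite $\bS_{\mu_j}(\bV_n) \hookrightarrow Q' \xrightarrow{\psi'} P'$ is an $\fh_n$-map into $\Gamma_n(P)$, and the hypothesis $\ell(\mu_j) \le n$ lets (c) produce a unique $\fh$-equivariant lift $\bS_{\mu_j}(\bV) \to P$; summing these gives $\psi$.

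Finally, I would set $N = \coker(\psi)$ in $\Rep^{\pol}(\fh)$. Because $\Gamma_n$ is exact and cocontinuous (established in the preceding proposition on properties of $\Gamma_n$), we obtain $\Gamma_n(N) \cong \coker(\Gamma_n(\psi)) = \coker(\psi') \cong M$, so $N$ is the required preimage. The heart of the argument is really the lifting step; the rest is formal bookkeeping. This step would fail without the row bound $\ell(\mu_j) \le n$, which is exactly why parts (c) and (e) of Proposition~\ref{prop:gendeg2} are designed to interlock.
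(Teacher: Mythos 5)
Your proof is correct and follows essentially the same route as the paper: apply Proposition~\ref{prop:gendeg2}(e) twice to obtain a presentation of $M$ by sums of $\bS_\nu(\bV_n)$ with $\ell(\nu) \le n$, lift the presentation map to $\Rep^{\pol}(\fh)$ via Proposition~\ref{prop:gendeg2}(c), and take the cokernel, using exactness of $\Gamma_n$ to conclude. The only cosmetic difference is that you apply (c) one summand at a time, which is exactly what the paper's appeal to (c) for the whole $\Hom$-space amounts to.
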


We begin with some lemmas.

\begin{lemma} \label{lem:gendeg2-1}
If $\ell(\lambda) \le n$ then $\bS_{\lambda}(\bV_n)$ is generated by $\bS_{\lambda}(\bW_n)$ as an $\fh_n$-module.
\end{lemma}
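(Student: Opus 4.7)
The plan is to adapt the strategy of Proposition~\ref{prop:generators} to the finite-variable setting, using a Young symmetrizer to mediate the constraint that variables lie in $\{1,\ldots,n\}$. Realize $\bS_\lambda(\bV_n)$ as the image of a Young symmetrizer $c_\lambda \in \bk[\fS_r]$ (for $r = |\lambda|$) acting on $\bV_n^{\otimes r}$; the hypothesis $\ell(\lambda) \le n$ ensures $\bS_\lambda(\bW_n)$ is nonzero. Since the $\fS_r$-action on tensor factors commutes with the $\fh_n$-action (both acting slot-wise), any element of $\bS_\lambda(\bV_n)$ is of the form $c_\lambda u$ for some $u \in \bV_n^{\otimes r}$, and it suffices to show $c_\lambda u \in N$ for every pure monomial tensor $u = p_1 \otimes \cdots \otimes p_r$, where $N$ denotes the $\fh_n$-submodule of $\bS_\lambda(\bV_n)$ generated by $\bS_\lambda(\bW_n)$.

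I would prove this by a double induction: primarily on the total degree $|\mu(u)| = \sum_i \deg(p_i)$, and secondarily on the number of slots $i$ with $\deg(p_i) \ge 2$. The base case $|\mu| = r$ forces every $p_i \in \bW_n$, placing $c_\lambda u$ in $\bS_\lambda(\bW_n) \subseteq N$. For the inductive step, I would pick a slot $i$ with $\deg(p_i) \ge 2$, factor $p_i = x_j \cdot q$ with $x_j$ a variable appearing in $p_i$, and let $u'$ be $u$ with slot $i$ replaced by $x_j$; then $|\mu(u')| < |\mu|$, so $c_\lambda u' \in N$ by primary induction. Applying the derivation $(p_i \partial_{x_j}) \in \fh_n$ and using commutativity with $c_\lambda$ yields
\[
(p_i \partial_{x_j}) \cdot c_\lambda u' \;=\; c_\lambda u \;+\; \sum_{k \ne i} c_\lambda\, \mathrm{err}_k,
\]
where each $\mathrm{err}_k$ has $x_j$ in slot $i$ and $p_i \cdot \partial_{x_j}(p_k)$ in slot $k$, nonzero only for $j \in \mathrm{supp}(p_k)$. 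If $j$ can be chosen outside $\bigcup_{k \ne i} \mathrm{supp}(p_k)$, all error terms vanish and $c_\lambda u \in N$ directly.

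In the general case, the secondary measure strictly decreases for each nonzero $\mathrm{err}_k$ with $\deg(p_k) \ge 2$, so those terms are handled by secondary induction. The residual case is when $p_k = x_j$ (so $\deg(p_k) = 1$): here $\mathrm{err}_k = (i\, k)\cdot u$ is simply a transposition of $u$, and the secondary measure is unchanged. For this case I would appeal to the representation theory of $c_\lambda$: when $i$ and $k$ lie in the same row of the tableau underlying $c_\lambda$ one has $c_\lambda \cdot (i,k) = c_\lambda$, so $c_\lambda \mathrm{err}_k = c_\lambda u = v$, contributing a scalar multiple of $v$ and producing a linear equation in $v$ that is solvable in characteristic zero; same-column swaps contribute $-v$ similarly, and mixed-position swaps can be eliminated by judicious choice of the factoring $p_i = x_j q$ (or by choosing $i$ at a corner of the Young diagram).

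The main obstacle I anticipate is the degenerate regime where $\mathrm{supp}(\mu)$ exhausts $\{1,\ldots,n\}$ and no $j$ avoids the other supports. A complementary route that may ease this is to use that $N$ is $\fgl_n$-stable and so, by semisimplicity, it suffices to show every $\fgl_n$-highest weight vector of $\bS_\lambda(\bV_n)$ lies in $N$; since such a vector has weight a partition $\nu$ with support $\{1,\ldots,\ell(\nu)\}$, the easier regime of the argument is reactivated whenever $\ell(\nu) < n$, and the remaining $\ell(\nu) = n$ highest-weight vectors can be treated by the same inductive template with the partition structure of $\nu$ providing the needed slack.
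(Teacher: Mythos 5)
Your approach is genuinely different from the paper's, and as sketched it has a real gap in the residual (transposition) case. In the paper, the lemma is deduced from the already-established infinite-dimensional fact that $\bS_\lambda(\bV)$ is generated by $\bS_\lambda(\bW)$ over $\fh$ (Proposition~\ref{prop:generators}). One writes an arbitrary weight vector $m\in\bS_\lambda(\bV_n)$ as $\sum Y_i m_0$ with $m_0\in\bS_\lambda(\bW_n)$ a weight vector and each $Y_i$ a PBW-admissible monomial $X_{f_1,i_1}\cdots X_{f_r,i_r}$ (ordered so that $i_1\le\cdots\le i_r$), and then shows by a weight-support argument that every nonzero contributing $Y_i$ already lies in $U(\fh_n)$: since the weight of $m_0$ has support in $\{1,\dots,n\}$, one must have $i_r\le n$, hence by the ordering all $i_j\le n$, and then since the weight of $Y_im_0$ also has support in $\{1,\dots,n\}$, each $f_j$ must lie in $\bV_n$. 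This sidesteps the need to pick a fresh index $j$ in a finite alphabet entirely; it is the cornerstone of the proof and is not present in your proposal.

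Your direct induction runs into trouble precisely at the step you flag as the residual case. When $p_k=x_j$ for some $k\neq i$, the error term is $(i\,k)\cdot u$, which has the same total degree and the same number of slots of degree $\ge 2$ as $u$, so neither your primary nor your secondary induction advances. The proposed rescue via Young-symmetrizer identities does not work in general. First, the identity $c_\lambda\cdot(i\,k)=c_\lambda$ for a same-row transposition holds for one choice of $c_\lambda$ (the product ``column antisymmetrizer $\times$ row symmetrizer'') but not the other, and the ``$-c_\lambda$'' version for same-column transpositions then fails; one cannot have both. More importantly, for a ``mixed'' transposition $c_\lambda(i\,k)$ bears no simple relation to $c_\lambda$, and you do not get to choose where $i$ and $k$ sit in the tableau: they are dictated by the degree profile of $u$ and by which slots equal $x_j$. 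For a concrete failure, take $\lambda=(2,1)$, $n=2$, the row-major tableau, and $u=x_1^2\otimes x_2\otimes x_1$. The only possible choice is $i=1$, $j=1$, which produces $(x_1^2\partial_{x_1})\cdot c_\lambda(x_1\otimes x_2\otimes x_1)=c_\lambda u+c_\lambda(1\,3)u$. Here $(1\,3)$ is a same-column transposition, so (in the convention where column swaps give $-1$) the right side collapses to $0$, and the induction yields nothing about $c_\lambda u$. The fallback to $\fgl_n$-highest-weight vectors and to ``the partition structure of $\nu$ providing slack'' is not worked out and does not obviously remove the obstruction, since the problematic weights are exactly those with $\ell(\nu)=n$, where every index in $\{1,\dots,n\}$ is already occupied and the residual case is forced. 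As it stands, the proposal does not constitute a proof.
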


\begin{proof}
Let $x_1, x_2, \ldots$ be the standard basis for $\bW$. For a weight vector $f$ of $\bV$, let $X_{f,i}=f \partial_{x_i}$, regarded as an element of $\fh$. These elements span $\fh$. By (the easy part of) Poincar\'e--Birkhoff--Witt, the universal enveloping algebra $U(\fh)$ of $\fh$ is spanned by elements of the form $X_{f_1,i_1} \cdots X_{f_r,i_r}$ with $i_1 \le i_2 \le \cdots \le i_r$. We call these \defn{admissible elements}.

Let $m_0$ be a non-zero weight vector in $\bS_{\lambda}(\bW_n)$ and let $m$ be an arbitrary weight vector in $\bS_{\lambda}(\bV_n)$. Since $\bS_{\lambda}(\bW)$ is an irreducible representation of $\fgl$, it is generated by $m_0$. Since $\bS_{\lambda}(\bV)$ is generated by $\bS_{\lambda}(\bW)$ as an $\fh$-module (Proposition~\ref{prop:generators}), it is thus generated by the single element $m_0$. We can therefore write $m=\sum_{i=1}^s Y_i m_0$ where $Y_i \in U(\fh)$ is an admissible element. Note that since each $Y_i m_0$ is a weight vector, we can assume that it has the same weight as $m$, since all terms for which this is not true must cancel. Of course, we can also assume that $Y_i m_0$ is non-zero for each $i$.

Let us examine the element $Ym_0$ where $Y=X_{f_1,i_1} \cdots X_{f_r,i_r}$ is admissible. Since the support of the weight of $m_0$ is contained in $\{1,\ldots,n\}$, it follows that $\partial_{x_i} m_0=0$ if $i>n$. Thus if $Ym_0$ is non-zero then $i_r \le n$; by admissibility, it follows that each index $i_j$ is $\le n$. If additionally the support of the weight of $Ym_0$ is contained in $\{1, \ldots, n\}$ then the same must be true for the weight of each $f_i$, since the partial derivatives appearing in $Y$ cannot decrease the weight outside of this set of indices. It follows that $f_i$ belongs to $\bV_n$, and so $Y$ actually belongs to $U(\fh_n)$.

Returning to our expression for $m$, we thus see that each $Y_i$ belongs to $U(\fh_n)$. This shows that the $\fh_n$-submodule of $\bS_{\lambda}(\bV_n)$ generated by $m_0$ contains $m$. Since $m$ is arbitrary, the result follows.
\end{proof}

\begin{lemma} \label{lem:gendeg2-2}
Suppose $\ell(\lambda) \le n$ and $N$ is a polynomial $\fh$-module. Then the natural maps
\begin{displaymath}
\Hom_{\fh}(\bS_{\lambda}(\bV), N) \to \Hom_{\fh_n}(\bS_{\lambda}(\bV_n), \Gamma_n(N)) \to \Hom_{\fgl_n}(\bS_{\lambda}(\bW_n), \Gamma_n(N))
\end{displaymath}
are isomorphisms.
\end{lemma}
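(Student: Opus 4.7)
The plan is to factor the composite of the two displayed maps through $\Hom_{\fgl}(\bS_{\lambda}(\bW), N)$, then combine Proposition~\ref{prop:gproj2} with a standard multiplicity calculation to conclude the composite is an isomorphism, and finally use Lemma~\ref{lem:gendeg2-1} plus a short diagram chase to deduce that each of the two maps individually is an isomorphism.

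First I would observe that the composite of the two maps in the lemma agrees with the map obtained by restricting an $\fh$-homomorphism to the $\fgl$-submodule $\bS_{\lambda}(\bW) \subseteq \bS_{\lambda}(\bV)$ and then applying $\Gamma_n = (-)^{\fgl_{\infty-n}}$; this uses the identification $\Gamma_n(\bS_{\lambda}(\bW)) = \bS_{\lambda}(\bW_n)$, which holds since $\ell(\lambda) \le n$. Proposition~\ref{prop:gproj2} says that restriction to $\bS_{\lambda}(\bW)$ is an isomorphism $\Hom_{\fh}(\bS_{\lambda}(\bV), N) \to \Hom_{\fgl}(\bS_{\lambda}(\bW), N)$. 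For the remaining factor $\Hom_{\fgl}(\bS_{\lambda}(\bW), N) \to \Hom_{\fgl_n}(\bS_{\lambda}(\bW_n), \Gamma_n(N))$, I would decompose the polynomial $\fgl$-module $N$ as an (possibly infinite) direct sum of Schur functors and, using that $\Gamma_n$ is exact and cocontinuous, reduce to checking, for each partition $\mu$, that the induced map $\Hom_{\fgl}(\bS_{\lambda}(\bW), \bS_{\mu}(\bW)) \to \Hom_{\fgl_n}(\bS_{\lambda}(\bW_n), \bS_{\mu}(\bW_n))$ is an isomorphism; both sides equal $\bk$ when $\mu = \lambda$ and both vanish otherwise, using $\ell(\lambda) \le n$ to guarantee $\bS_{\lambda}(\bW_n) \neq 0$ together with the fact that Schur functors of distinct shape with length at most $n$ give non-isomorphic irreducibles of $\fgl_n$.

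Next, Lemma~\ref{lem:gendeg2-1} tells me that $\bS_{\lambda}(\bW_n)$ generates $\bS_{\lambda}(\bV_n)$ as an $\fh_n$-module, so the second map of the lemma, being restriction to $\bS_{\lambda}(\bW_n)$, is injective. Since the composite is an isomorphism and the second map is injective, the second map must also be surjective (its image already contains that of the composite, namely all of the target), hence an isomorphism; the first map is then the composite precomposed with the inverse of the second, so it too is an isomorphism.

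I do not anticipate any serious obstacle. The only mildly delicate point is verifying that the composite really factors as claimed — that restricting an $\fh$-map $\bS_{\lambda}(\bV) \to N$ to $\bS_{\lambda}(\bW_n)$ agrees with first restricting to $\bS_{\lambda}(\bW)$ and then applying $\Gamma_n$ — but this is a naturality check resting on the containment $\bS_{\lambda}(\bW_n) \subseteq \bS_{\lambda}(\bW) \cap \bS_{\lambda}(\bV_n)$ and the automatic landing of an $\fgl_{\infty-n}$-invariant subspace of $N$ in $\Gamma_n(N)$.
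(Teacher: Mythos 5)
Your argument is correct and follows the same route as the paper's: the paper lays out exactly the commutative square whose top map (injective by Lemma~\ref{lem:gendeg2-1}), bottom map (iso by Proposition~\ref{prop:gproj2}), and right map (the $\fgl$-level specialization, an iso by the standard multiplicity computation you spell out) imply the top and left maps are both isomorphisms. Your version just verbalizes the same diagram chase and fills in the details the paper labels ``standard facts.''
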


\begin{proof}
Consider the commutative diagram
\begin{displaymath}
\xymatrix{
\Hom_{\fh_n}(\bS_{\lambda}(\bV_n), \Gamma_n(N)) \ar[r] & \Hom_{\fgl_n}(\bS_{\lambda}(\bW_n), \Gamma_n(N)) \\
\Hom_{\fh}(\bS_{\lambda}(\bV), N) \ar[r] \ar[u] & \Hom_{\fgl}(\bS_{\lambda}(\bW), N) \ar[u] }
\end{displaymath}
The top map is injective by Lemma~\ref{lem:gendeg2-1}. We have already shown that the bottom map is an isomorphism (Proposition~\ref{prop:gproj2}). The right map is an isomorphism using standard facts about polynomial representations of $\fgl$; the key point here is that because $\ell(\lambda) \le n$ the representation $\bS_{\lambda}(\bW_n)$ is non-zero and irreducible. Thus the left and top maps are isomorphisms.
\end{proof}

\begin{proof}[Proof of Proposition~\ref{prop:gendeg2}]
Lemma~\ref{lem:gendeg2-1} proves statement (a). Lemma~\ref{lem:gendeg2-2} proves statement (c), and statement (b) for modules $M$ that belong to the essential image of $\Gamma_n$.

We now prove (e). Let $M$ be a polynomial representation of $\fh_n$. Thus $M=A/B$ where $B \subset A$ are $\fh_n$-submodules of $X=\bigoplus_{i \in I} \bS_{\mu_i}(\bV_n)$, where the $\mu_i$ are arbitrary partitions. Let $A=\bigoplus_{j \in J} \bS_{\nu_j}(\bW_n)$ be the decomposition of $A$ as a $\fgl_n$-module, where the $\nu_j$ are partitions with $\le n$ rows. Since $X$ is in the essential image of $\Gamma_n$, we see that (b) holds for it. Thus the inclusion $\bS_{\nu_j}(\bW_n) \to X$ is the restriction of a unique map $\bS_{\nu_j}(\bV_n) \to X$ of $\fh_n$-modules. Since $\bS_{\nu_j}(\bW_n)$ maps into $A$ and generates $\bS_{\nu_j}(\bV_n)$, it follows that all of $\bS_{\nu_j}(\bV_n)$ maps into $A$. We thus have a map of $\fh_n$-modules $\bigoplus_{j \in J} \bS_{\nu_j}(\bV_n) \to A$, which is clearly surjective. Since $M$ is a quotient of $A$, we also have such a surjection with $M$ in place of $A$. This proves (e).

We now prove that $\Gamma_n$ is essentially surjective. Thus let $M$ be a polynomial $\fh_n$-module. Applying (e) twice, we can find a presentation
\begin{displaymath}
P_1 \stackrel{f}{\to} P_0 \to M \to 0
\end{displaymath}
where $P_0$ and $P_1$ are sums of $\bS_{\nu}(\bV_n)$ with $\ell(\nu) \le n$. Let $Q_0$ and $Q_1$ be the corresponding sums of $\bS_{\nu}(\bV)$'s. Thus $P_i=\Gamma_n(Q_i)$. By (c), the map
\begin{displaymath}
\Hom_{\fh}(Q_1, Q_0) \to \Hom_{\fh_n}(P_1, P_0)
\end{displaymath}
is an isomorphism. Thus $f$ is the specialization of a unique map $g \colon Q_1 \to Q_0$. Let $N=\coker(g)$. Since $\Gamma_n$ is exact, we have $M=\Gamma_n(N)$, as required. Since $\Gamma_n$ is essentially surjective, this proves (b) in general.

Statement (d) follows from an argument as in the proof of Proposition~\ref{prop:class-proj}.
\end{proof}

\begin{example}
Proposition~\ref{prop:gendeg2} shows that $\bS_{\lambda}(\bV_n)$ is projective and generated in degree $\vert \lambda \vert$ if $\ell(\lambda) \le n$. These statements fail if $\ell(\lambda)>n$. To see this, we take $P=\Com^{\rn\ru}$ and $n=1$ in what follows. Note that $\fh_1=\fW^{\pos}_1$ in this case and $\bV_1$ is the positive degree part of the polynomial ring $\bk[t]$. (See \S \ref{sss:comnu}.)

The module $M=\lw^2(\bV_1)$ is nonzero since $\bV_1$ is infinite dimensional, but vanishes in degree~2 since $\bW_1$ is one-dimensional. Thus $M$ is \emph{not} generated in degree~2. Consequently, $\bV_1^{\otimes 2}$ is also not generated in degree~2.

We now show that $M$ is not projective. Indeed, suppose it were. By Proposition~\ref{prop:gendeg2}, the indecomposable projective $\fh_1$-modules have the form $\Sym^d(\bV_1)$. Because $M_n=0$ for $n<3$ and $\dim(M_3)=1$ it follows that $M$ would contain $N=\Sym^3(\bV_1)$ as a summand. However, $\dim M_n$ has linear growth, while $\dim N_n$ has quadratic growth, so we have a contradiction.

This shows that the projectives of $\Rep^{\rm pol}(\fh_1)$ are not closed under tensor product, and that the hypothesis of Proposition~\ref{prop:gendeg2} that $\ell(\lambda) < n$ is necessary. 
\end{example}

\subsection{Adjoints of specialization} \label{ss:adjoints}

The specialization functor $\Gamma_n$ is co-continuous since it is exact and commutes with direct sums. Indeed, direct sums in $\Rep^{\pol}(\fh)$ and $\Rep^{\pol}(\fh_n)$ are just the usual direct sum representations, and formation of invariants commutes with direct sums. It follows that $\Gamma_n$ admits a right adjoint $\Sigma_n$. We now examine it.

\begin{proposition} \label{prop:right-adjoint}
Let $M$ be a polynomial $\fh_n$-module.
\begin{enumerate}
\item We have a decomposition
\begin{displaymath}
\Sigma_n(M) = \bigoplus_{\mu} \Hom_{\fh_n}(\bS_{\mu}(\bV_n), M) \otimes \bS_{\mu}(\bW)
\end{displaymath}
of $\fgl$-modules, where the sum is over all partitions $\mu$.
\item The co-unit $\Gamma_n(\Sigma_n(M)) \to M$ is an isomorphism.
\end{enumerate}
\end{proposition}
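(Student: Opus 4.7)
The plan is to build the decomposition in (a) directly from the adjunction and the projectivity results already established, and then deduce (b) as a computation using this decomposition and the fact that $\Gamma_n(\bS_\mu(\bW)) = \bS_\mu(\bW_n)$.

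First, since $\Gamma_n$ is exact and cocontinuous between locally presentable abelian categories (as noted at the start of \S\ref{ss:adjoints}), a right adjoint $\Sigma_n$ exists. As a polynomial $\fh$-module, $\Sigma_n(M)$ is in particular a polynomial $\fgl$-module, hence decomposes canonically as
\[
\Sigma_n(M) = \bigoplus_{\mu} \Hom_{\fgl}(\bS_\mu(\bW), \Sigma_n(M)) \otimes \bS_\mu(\bW),
\]
the sum over all partitions $\mu$. To identify each multiplicity space, I would chain together three isomorphisms: Proposition~\ref{prop:gproj2} (so $\Hom_{\fgl}(\bS_\mu(\bW),-)$ on $\Rep^{\pol}(\fh)$ is identified with $\Hom_{\fh}(\bS_\mu(\bV),-)$), the adjunction $(\Gamma_n,\Sigma_n)$, and the computation $\Gamma_n(\bS_\mu(\bV)) = \bS_\mu(\bV_n)$ (which follows from symmetric monoidality of $\Gamma_n$ and $\Gamma_n(\bV)=\bV_n$). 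This yields
\[
\Hom_{\fgl}(\bS_\mu(\bW), \Sigma_n(M)) \cong \Hom_{\fh}(\bS_\mu(\bV), \Sigma_n(M)) \cong \Hom_{\fh_n}(\bS_\mu(\bV_n), M),
\]
proving (a).

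For (b), apply $\Gamma_n$ to the decomposition from (a). Since $\Gamma_n$ is exact and cocontinuous, and since $\Gamma_n(\bS_\mu(\bW)) = \bS_\mu(\bW_n)$ (from $\Gamma_n(\bW)=\bW_n$ and monoidality), we get
\[
\Gamma_n(\Sigma_n(M)) = \bigoplus_{\mu} \Hom_{\fh_n}(\bS_\mu(\bV_n), M) \otimes \bS_\mu(\bW_n).
\]
Terms with $\ell(\mu) > n$ vanish automatically since $\bS_\mu(\bW_n)=0$. For $\ell(\mu) \le n$, Proposition~\ref{prop:gendeg2}(b) identifies $\Hom_{\fh_n}(\bS_\mu(\bV_n), M)$ with $\Hom_{\fgl_n}(\bS_\mu(\bW_n), M)$, so
\[
\Gamma_n(\Sigma_n(M)) \cong \bigoplus_{\ell(\mu) \le n} \Hom_{\fgl_n}(\bS_\mu(\bW_n), M) \otimes \bS_\mu(\bW_n),
\]
which is just the canonical isotypic decomposition of the polynomial $\fgl_n$-module $M$. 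So abstractly there is an isomorphism of $\fgl_n$-modules $\Gamma_n(\Sigma_n(M)) \to M$.

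The subtle (and really the only nontrivial) point I anticipate is checking that the isomorphism produced above actually \emph{is} the counit, rather than just some isomorphism between the two modules, and moreover that it is $\fh_n$-equivariant and not merely $\fgl_n$-equivariant. The plan for this is to trace a weight vector $v \in \bS_\mu(\bW_n) \subset \Gamma_n(\Sigma_n(M))$ through the adjunction: under the isomorphism of (a), the inclusion of $\bS_\mu(\bW)$ into $\Sigma_n(M)$ corresponds under Proposition~\ref{prop:gproj2} and the adjunction to a map $\bS_\mu(\bV_n) \to M$; evaluating the counit on $v$ should reproduce the value of this map on the $\fgl_n$-highest-weight vector in $\bS_\mu(\bW_n) \subset \bS_\mu(\bV_n)$. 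Since both $\Gamma_n(\Sigma_n(M))$ and $M$ are polynomial $\fh_n$-modules and the counit is $\fh_n$-linear, agreement on one such generating vector in each isotypic component (together with $\fgl_n$-equivariance and the semisimplicity of $\Rep^{\pol}(\fgl_n)$) forces the counit to be an isomorphism on the whole module.
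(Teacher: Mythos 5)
Your proposal matches the paper's proof of both parts essentially verbatim: part (a) is the same three-step identification of multiplicity spaces via Proposition~\ref{prop:gproj2}, the $(\Gamma_n,\Sigma_n)$-adjunction, and $\Gamma_n(\bS_\mu(\bV))=\bS_\mu(\bV_n)$, and part (b) is the same specialization of the decomposition followed by Proposition~\ref{prop:gendeg2}(b). The only difference is that you sketch in more detail the (routine but nontrivial) check that the resulting isomorphism is in fact the counit, which the paper leaves to the reader.
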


\begin{proof}
Let $\Sigma_n(M)=\bigoplus_{\mu} U_{\mu} \otimes \bS_{\mu}(\bW)$ be the decomposition of $\Sigma_n(M)$ into irreducible $\fgl$ representations, where $U_{\mu}$ is a multiplicity space.  We have
\begin{align*}
U_{\mu}
&= \Hom_{\fgl}(\bS_{\mu}(\bW), \Sigma_n(M)) \\
&= \Hom_{\fh}(\bS_{\mu}(\bV), \Sigma_n(M)) \\
&= \Hom_{\fh_n}(\bS_{\mu}(\bV_n), M).
\end{align*}
In the first step, we used the definition of multiplicity space; in the second, Proposition~\ref{prop:gproj2}; and in the third, adjunction. This proves (a).

By (a), we have a decomposition
\begin{displaymath}
\Gamma_n(\Sigma_n(M)) = \bigoplus_{\mu} \Hom_{\fh_n}(\bS_{\mu}(\bV_n), M) \otimes \bS_{\mu}(\bW_n).
\end{displaymath}
In this sum, it suffices to consider partitions $\mu$ with $\ell(\mu) \le n$, since otherwise $\bS_{\mu}(\bW_n)$ vanishes. By Proposition~\ref{prop:gendeg2}(b), the above is exactly the decomposition of $M$ into its $\fgl_n$-isotypic pieces. We thus have a natural isomorphism $\Gamma_n(\Sigma_n(M))=M$, which one verifies is in fact the co-unit of the adjunction.
\end{proof}

The specialization functor $\Gamma_n$ is also continuous, though this is less obvious; we now give a proof.

\begin{proposition}
The specialization functor $\Gamma_n$ is continuous.
\end{proposition}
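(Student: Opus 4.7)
The plan is to deduce continuity by exhibiting a left adjoint $\Psi_n$ to $\Gamma_n$; once such an adjoint exists, $\Gamma_n$ automatically preserves every small limit. All the key ingredients are already in place via Propositions~\ref{prop:gproj2} and~\ref{prop:gendeg2}, so the task is really one of assembly.

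First, I would construct $\Psi_n$ on the compact projective generators. By Proposition~\ref{prop:gendeg2}(d,e), the modules $\bS_{\lambda}(\bV_n)$ with $\ell(\lambda)\le n$ form a set of compact projective generators of $\Rep^{\pol}(\fh_n)$; let $\cP_n$ denote the full subcategory they span. Proposition~\ref{prop:gendeg2}(c) says that specialization gives natural bijections
\[
\Gamma_n \colon \Hom_{\fh}(\bS_{\lambda}(\bV), \bS_{\mu}(\bV)) \xrightarrow{\ \sim\ } \Hom_{\fh_n}(\bS_{\lambda}(\bV_n), \bS_{\mu}(\bV_n))
\]
for all such $\lambda,\mu$. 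Using the inverses of these bijections, I would define a fully faithful functor $F\colon \cP_n \to \Rep^{\pol}(\fh)$ by $F(\bS_{\lambda}(\bV_n)) = \bS_{\lambda}(\bV)$.

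Next, I would extend $F$ to a cocontinuous functor $\Psi_n\colon \Rep^{\pol}(\fh_n)\to \Rep^{\pol}(\fh)$. Since the objects of $\cP_n$ are compact projective generators, every polynomial $\fh_n$-module $M$ admits a presentation $P_1 \to P_0 \to M \to 0$ with $P_0,P_1$ direct sums of objects of $\cP_n$, and we set $\Psi_n(M) := \coker(F(P_1) \to F(P_0))$. This is the standard Morita-style cocontinuous extension, analogous to the one used in the proof of the equivalence $\Rep(\cW^{\op})\simeq \Rep^{\pol}(\fh)$.

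Finally, I would verify the adjunction $\Hom_{\fh}(\Psi_n(X), Y)\cong \Hom_{\fh_n}(X, \Gamma_n(Y))$. Both sides are contravariant in $X$ and turn colimits into limits, so it suffices to check the isomorphism when $X = \bS_{\lambda}(\bV_n)$ with $\ell(\lambda)\le n$. For such $X$ I would chain together the identifications
\[
\Hom_{\fh}(\bS_{\lambda}(\bV),Y)
= \Hom_{\fgl}(\bS_{\lambda}(\bW),Y)
= \Hom_{\fgl_n}(\bS_{\lambda}(\bW_n), \Gamma_n(Y))
= \Hom_{\fh_n}(\bS_{\lambda}(\bV_n), \Gamma_n(Y)),
\]
where the first uses Proposition~\ref{prop:gproj2}, the second uses that $\fgl_{\infty-n}$-invariants of a polynomial $\fgl$-representation pick out precisely the Schur summands $\bS_{\mu}(\bW)$ with $\ell(\mu)\le n$ (sending them to $\bS_{\mu}(\bW_n)$), and the third is Proposition~\ref{prop:gendeg2}(b). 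Existence of the left adjoint then gives continuity.

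The main obstacle I expect is the cocontinuous extension in the second step: one must check that $\Psi_n$ is genuinely independent of the chosen presentation and defines a functor with the correct universal property. This is a routine but bookkeeping-heavy argument of the same flavor as the one used in passing from the equivalence on compact projectives to the full equivalence $\Rep(\cW^{\op})\simeq \Rep^{\pol}(\fh)$, and can be handled uniformly by invoking the standard fact that a cocomplete abelian category with a set of compact projective generators is determined (as a target for cocontinuous functors) by its full subcategory of such generators.
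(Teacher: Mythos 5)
Your proposal is correct, but it takes a genuinely different route from the paper. The paper reduces continuity to preservation of products, passing through the forgetful functors $\Phi\colon\Rep^{\pol}(\fh)\to\Rep^{\pol}(\fgl)$ and $\Phi_n\colon\Rep^{\pol}(\fh_n)\to\Rep^{\pol}(\fgl_n)$: both are right adjoints (to the cocontinuous functors sending $\bS_\lambda(\bW)\mapsto\bS_\lambda(\bV)$ and $\bS_\mu(\bW_n)\mapsto\bS_\mu(\bV_n)$), so they preserve products, and the $\fgl$-level specialization preserves products by direct inspection of multiplicity spaces; conservativity of $\Phi_n$ then transfers the statement up to $\Gamma_n$. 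You instead build the left adjoint of $\Gamma_n$ outright, by the Morita-style cocontinuous extension of $\bS_\lambda(\bV_n)\mapsto\bS_\lambda(\bV)$ from the category of compact projective generators, and verify the adjunction on generators via the chain through $\fgl$- and $\fgl_n$-homs. Both arguments are sound. The paper's is shorter and avoids the bookkeeping of the cocontinuous extension, since it only needs preservation of products, not the adjoint itself. Your version does more work here but front-loads material the paper handles in the subsequent proposition on $\Delta_n$: in effect you have simultaneously constructed $\Delta_n$ and shown $\Delta_n(\bS_\lambda(\bV_n))=\bS_\lambda(\bV)$, whereas the paper establishes continuity first, invokes the adjoint functor theorem to obtain $\Delta_n$, and then identifies it on generators. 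One small caution: your step ``both sides turn colimits into limits, so it suffices to check on generators'' is glossing over the fact that one first needs a natural transformation between the two Hom-functors; the clean way is to observe that the right adjoint of your $\Psi_n$ is automatically $Y\mapsto\Hom_\fh(\Psi_n(-),Y)$ viewed as a module over the projective generators, and your chain of identifications shows this coincides with $\Gamma_n$. Also note the symbol clash: the paper uses $\Psi_n$ for the functor $\Rep^{\pol}(\fgl_n)\to\Rep^{\pol}(\fh_n)$, not for the left adjoint of $\Gamma_n$ (which it calls $\Delta_n$).
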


\begin{proof}
It suffices to prove that $\Gamma_n$ preserves products. Let
\begin{displaymath}
\Phi \colon \Rep^{\pol}(\fh) \to \Rep^{\pol}(\fgl), \qquad
\Phi_n \colon \Rep^{\pol}(\fh_n) \to \Rep^{\pol}(\fgl_n)
\end{displaymath}
be the forgetful functors. Also, let
\begin{displaymath}
\Psi \colon \Rep^{\pol}(\fgl) \to \Rep^{\pol}(\fh), \qquad
\Psi_n \colon \Rep^{\pol}(\fgl_n) \to \Rep^{\pol}(\fh_n)
\end{displaymath}
be the co-continuous functors satisfying
\begin{displaymath}
\Psi(\bS_{\lambda}(\bW))=\bS_{\lambda}(\bV), \qquad
\Psi_n(\bS_{\mu}(\bW_n))=\bS_{\mu}(\bV_n),
\end{displaymath}
where $\ell(\mu) \le n$. By Proposition~\ref{prop:gproj2}, we see that $\Phi$ is the right adjoint of $\Psi$, and therefore preserves products. Similarly, Proposition~\ref{prop:gendeg2} shows that $\Phi_n$ is the right adjoint of $\Psi_n$, and so preserves products. Now consider the following commutative diagram
\begin{displaymath}
\xymatrix@C=4em{
\Rep^{\pol}(\fh) \ar[r]^{\Gamma_n} \ar[d]_{\Phi} & \Rep^{\pol}(\fh_n) \ar[d]^{\Phi_n} \\
\Rep^{\pol}(\fgl) \ar[r] & \Rep^{\pol}(\fgl_n) }
\end{displaymath}
where the bottom map is the specialization functor on $\fgl$ representations. The bottom map preserves products since products in these categories are computed by taking products of multiplicity spaces. Since $\Phi_n$ is a conservative functor, it follows that $\Gamma_n$ preserves products.
\end{proof}

From the above, it follows that $\Gamma_n$ admits a left adjoint $\Delta_n$. We now examine it.

\begin{proposition}
We have the following:
\begin{enumerate}
\item Let $\lambda$ be a partition with $\ell(\lambda) \le n$. Then $\Delta_n(\bS_{\lambda}(\bV_n))=\bS_{\lambda}(\bV)$.
\item Let $M$ be a polynomial $\fh_n$-module. Write $M=\coker(f)$ where $f \colon P_1 \to P_0$ is a map of modules that are direct sums of $\bS_{\lambda}(\bV_n)$ with $\ell(\lambda) \le n$. Let $g \colon Q_1 \to Q_0$ be the corresponding map of sums of $\bS_{\lambda}(\bV)$'s. Then $\Delta_n(M)=\coker(g)$.
\item The unit $M \to \Gamma_n(\Delta_n(M))$ is an isomorphism for any $M \in \Rep^{\pol}(\fh_n)$.
\end{enumerate}
\end{proposition}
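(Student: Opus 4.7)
The plan is to prove the three parts in order, using Yoneda together with Proposition~\ref{prop:gendeg2}(c) for (a), the fact that $\Delta_n$ is a left adjoint (hence right-exact) for (b), and then exactness of $\Gamma_n$ for (c).

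For (a), I would argue by Yoneda. For an arbitrary polynomial $\fh$-module $N$, adjunction gives
\[
\Hom_{\fh}(\Delta_n(\bS_{\lambda}(\bV_n)), N) = \Hom_{\fh_n}(\bS_{\lambda}(\bV_n), \Gamma_n(N)),
\]
and Proposition~\ref{prop:gendeg2}(c) identifies the right-hand side with $\Hom_{\fh}(\bS_{\lambda}(\bV), N)$, naturally in $N$. Both identifications are natural, so $\Delta_n(\bS_{\lambda}(\bV_n)) \cong \bS_{\lambda}(\bV)$, and one should also check that the isomorphism is the expected one, namely the map adjoint to the unit $\bS_{\lambda}(\bV_n) \to \Gamma_n(\bS_{\lambda}(\bV))$.

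For (b), the existence of such a presentation of $M$ is Proposition~\ref{prop:gendeg2}(e). To lift $f$ to $g$, apply Proposition~\ref{prop:gendeg2}(c) termwise: the map
\[
\Hom_{\fh}(Q_1, Q_0) \to \Hom_{\fh_n}(P_1, P_0)
\]
induced by $\Gamma_n$ is an isomorphism, so there is a unique $g$ with $\Gamma_n(g) = f$. Since $\Delta_n$ is a left adjoint it preserves colimits, and in particular cokernels; combined with $\Delta_n(P_i) = Q_i$ from (a) (applied summandwise), we conclude $\Delta_n(M) = \coker(g)$. One should note that the functoriality of these identifications guarantees the maps $\Delta_n(P_i) \to Q_i$ agree with $g$, so the cokernel computation is genuinely compatible.

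For (c), apply the exact functor $\Gamma_n$ to the presentation $Q_1 \xrightarrow{g} Q_0 \to \Delta_n(M) \to 0$ obtained in (b). Because $\Gamma_n(Q_i) = P_i$ and $\Gamma_n(g) = f$, exactness of $\Gamma_n$ yields $\Gamma_n(\Delta_n(M)) = \coker(f) = M$. A short diagram chase, comparing this isomorphism to the one obtained by applying $\Gamma_n$ to the unit $Q_0 \twoheadrightarrow \Delta_n(M)$ and the natural map $P_0 \to \Gamma_n(\Delta_n(M))$, identifies it with the unit map of the adjunction.

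The only mild subtlety I anticipate is bookkeeping in (b): one must verify that the isomorphisms $\Delta_n(\bS_\lambda(\bV_n)) \cong \bS_\lambda(\bV)$ from (a) really assemble into a natural isomorphism of functors on the full subcategory spanned by the $\bS_\lambda(\bV_n)$ with $\ell(\lambda)\le n$, so that the cokernel on the $\Delta_n$-side is genuinely well-defined independent of presentation. This is a formal consequence of the naturality in (a), but it is the step most likely to be glossed over.
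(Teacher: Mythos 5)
Your proof is correct and follows essentially the same approach as the paper: Yoneda plus adjunction for (a), right-exactness of $\Delta_n$ for (b), and exactness of $\Gamma_n$ for (c). The only cosmetic difference is in (a), where you invoke Proposition~\ref{prop:gendeg2}(c) directly (which is precisely the composite isomorphism you need), whereas the paper threads through Proposition~\ref{prop:gendeg2}(b), irreducibility of $\bS_{\lambda}(\bW_n)$, and Proposition~\ref{prop:gproj2} — but the second chain is exactly how \ref{prop:gendeg2}(c) was proved, so this is a shortcut, not a different route.
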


\begin{proof}
(a) For $N \in \Rep^{\pol}(\fh)$ we have
\begin{align*}
\Hom_{\fh}(\Delta_n(\bS_{\lambda}(\bV_n)), N)
&= \Hom_{\fh_n}(\bS_{\lambda}(\bV_n), \Gamma_n(N))
= \Hom_{\fgl_n}(\bS_{\lambda}(\bW_n), \Gamma_n(N)) \\
&= \Hom_{\fgl}(\bS_{\lambda}(\bW), N)
= \Hom_{\fh}(\bS_{\lambda}(\bV), N),
\end{align*}
and so $\Delta_n(\bS_{\lambda}(\bV_n)) = \bS_{\lambda}(\bV)$ by Yoneda's lemma. In the first step above we used adjunction; in the second step, Proposition~\ref{prop:gendeg2}(b); in the third, the fact that $\bS_{\lambda}(\bW_n)$ is irreducible; and in the final step Proposition~\ref{prop:gproj2}.

(b) follows from (a) and the fact that $\Delta_n$ is right-exact.

(c) follows from (b) and the fact that $\Gamma_n$ is exact.
\end{proof}

\subsection{Serre quotients}

We now show how $\Rep^{\pol}(\fh_n)$ can be obtained directly from $\Rep^{\pol}(\fh)$ as a Serre quotient. Let $\cK_n$ be the full subcategory of $\Rep^{\pol}(\fh)$ spanned by modules $M$ whose simple $\fgl$-factors have the form $\bS_{\lambda}(\bW)$ with $\ell(\lambda)>n$. This category is exactly the kernel of the functor $\Gamma_n$, since $\bS_{\lambda}(\bW_n)$ vanishes if and only if $\ell(\lambda)>n$. Since $\Gamma_n$ is exact, it therefore induces a functor
\begin{equation} \label{eq:serre}
\Rep^{\pol}(\fh)/\cK_n \to \Rep^{\pol}(\fh_n)
\end{equation}
by the mapping property for Serre quotients. The following is our main result concerning this functor.

\begin{proposition} \label{prop:serre}
The functor \eqref{eq:serre} is an equivalence.
\end{proposition}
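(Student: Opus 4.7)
The plan is to apply a standard criterion for Serre localization: an exact cocontinuous functor $F \colon \cA \to \cB$ between Grothendieck abelian categories induces an equivalence $\cA/\ker(F) \xrightarrow{\sim} \cB$ whenever $F$ admits a fully faithful right adjoint. All the ingredients needed to apply this to $F = \Gamma_n$ are already in hand: $\Gamma_n$ is exact and cocontinuous, has a right adjoint $\Sigma_n$ (Section~\ref{ss:adjoints}), and by Proposition~\ref{prop:right-adjoint}(b) the counit $\Gamma_n \Sigma_n \to \id$ is an isomorphism. The latter is equivalent to the assertion that $\Sigma_n$ is fully faithful, so the criterion applies.

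To close the argument along this route, I only need to identify $\ker(\Gamma_n) = \cK_n$. Since $\Gamma_n$ depends only on the underlying $\fgl$-representation (it is just $\fgl_{\infty-n}$-invariants), and any polynomial $\fgl$-module decomposes as $\bigoplus_\lambda U_\lambda \otimes \bS_\lambda(\bW)$, one computes $\Gamma_n(M) = \bigoplus_\lambda U_\lambda \otimes \bS_\lambda(\bW_n)$. This vanishes precisely when $U_\lambda = 0$ for every $\lambda$ with $\ell(\lambda) \le n$, which is exactly the defining condition for membership in $\cK_n$. Together with the Serre-localization criterion, this gives Proposition~\ref{prop:serre}.

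If one prefers a direct argument instead of invoking the Gabriel criterion, the plan is as follows. Essential surjectivity of $\overline{\Gamma_n}$ follows from Proposition~\ref{prop:ess-surj} (or more cleanly from $\overline{\Gamma_n}(\Sigma_n M) \cong M$). For full faithfulness, by adjunction one has
$$\Hom_{\Rep^{\pol}(\fh_n)}(\Gamma_n M, \Gamma_n N) \cong \Hom_{\Rep^{\pol}(\fh)}(M, \Sigma_n \Gamma_n N),$$
and the unit $N \to \Sigma_n \Gamma_n N$ becomes an isomorphism in the Serre quotient because its kernel and cokernel are killed by $\Gamma_n$ (using $\Gamma_n \Sigma_n \Gamma_n \cong \Gamma_n$). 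The remaining task—matching Serre-quotient Hom into $\Sigma_n \Gamma_n N$ with ordinary Hom—reduces to showing that objects in the image of $\Sigma_n$ are $\cK_n$-local. The vanishing $\Hom(K,\Sigma_n(-)) = 0$ for $K \in \cK_n$ is immediate from adjunction, but the corresponding $\Ext^1$-vanishing is the main technical obstacle; this is precisely the step the Gabriel criterion packages for us, which is why that route is cleaner.
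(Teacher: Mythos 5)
Your proof is correct and takes essentially the same route as the paper, which also invokes the Gabriel criterion (\cite[Ch.~III, Prop.~5]{gabriel}) via the right adjoint $\Sigma_n$ and the isomorphism of the co-unit from Proposition~\ref{prop:right-adjoint}(b). Your additional verification that $\ker(\Gamma_n) = \cK_n$ is a point the paper settles in the paragraph preceding the proposition, so nothing is missing.
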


\begin{proof}
This follows from \cite[Ch.~III, Prop.~5]{gabriel}, as $\Gamma_n$ has a right adjoint $\Sigma_n$ and the co-unit $\Gamma_n \circ \Sigma_n \to \id$ is an isomorphism (Proposition~\ref{prop:right-adjoint}).
\end{proof}

As a corollary, we can transfer noetherianity from $\fh$ to $\fh_n$:

\begin{corollary}
If $\Rep^{\pol}(\fh)$ is locally noetherian then so is $\Rep^{\pol}(\fh_n)$.
\end{corollary}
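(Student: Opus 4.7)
The plan is to use the Serre quotient description of $\Gamma_n$ from Proposition~\ref{prop:serre}, together with its right adjoint $\Sigma_n$ and left adjoint $\Delta_n$ from \S\ref{ss:adjoints}. In outline, I would show (i) every finitely generated object of $\Rep^{\pol}(\fh_n)$ lifts through $\Gamma_n$ to a finitely generated object of $\Rep^{\pol}(\fh)$, and (ii) if $\tilde M \in \Rep^{\pol}(\fh)$ is noetherian, then $\Gamma_n(\tilde M)$ is noetherian. Together these give the corollary: a finitely generated $M \in \Rep^{\pol}(\fh_n)$ is the image of a noetherian lift, hence noetherian.

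For (i), given a finitely generated $M \in \Rep^{\pol}(\fh_n)$, Proposition~\ref{prop:gendeg2}(e) furnishes a presentation $P_1 \xrightarrow{f} P_0 \to M \to 0$ with $P_i$ a finite direct sum of modules $\bS_{\nu}(\bV_n)$ with $\ell(\nu) \le n$. By Proposition~\ref{prop:gendeg2}(c), $f$ lifts uniquely to a map $g\colon Q_1 \to Q_0$ between the corresponding finite sums of $\bS_{\nu}(\bV)$'s, and I would set $\tilde M := \coker(g)$ (equivalently, $\tilde M = \Delta_n(M)$). This $\tilde M$ is finitely generated in $\Rep^{\pol}(\fh)$, and $\Gamma_n(\tilde M) \cong M$ by exactness of $\Gamma_n$ together with the identity $\Gamma_n(\bS_{\nu}(\bV)) = \bS_{\nu}(\bV_n)$. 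By hypothesis, $\tilde M$ is noetherian.

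For (ii), given an ascending chain $N_1 \subseteq N_2 \subseteq \cdots$ of subobjects of $M = \Gamma_n(\tilde M)$, I would lift it to an ascending chain in $\tilde M$ as follows. Since $\Sigma_n$ is a right adjoint it is left exact, so it produces the chain $\Sigma_n N_i \subseteq \Sigma_n M$; pulling back along the unit $\tilde M \to \Sigma_n \Gamma_n \tilde M = \Sigma_n M$ yields
\[
\tilde N_i := \tilde M \times_{\Sigma_n M} \Sigma_n N_i \subseteq \tilde M,
\]
an ascending chain of subobjects of $\tilde M$ that stabilizes by noetherianity. Applying the exact functor $\Gamma_n$ (which therefore preserves the pullback) and using the co-unit isomorphism $\Gamma_n \Sigma_n \cong \id$ from Proposition~\ref{prop:right-adjoint}(b), one computes $\Gamma_n(\tilde N_i) \cong M \times_M N_i = N_i$, so the original chain stabilizes. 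The main (modest) obstacle is the categorical bookkeeping in this last step; conceptually the corollary is a standard consequence of the fact that a Serre localization with fully faithful right adjoint preserves noetherianity of finitely generated objects, in the spirit of \cite[Ch.~III]{gabriel}.
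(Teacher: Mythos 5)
Your argument is correct and follows the same strategic route as the paper: exploit the Serre quotient structure of $\Gamma_n$ (with its exactness and its fully faithful adjoints $\Sigma_n$, $\Delta_n$) to transfer noetherianity. The paper simply cites Gabriel's theorem that a quotient of a locally noetherian category by a localizing subcategory is locally noetherian \cite[Ch.~III, Prop.~9, Cor.~1]{gabriel}; your proof unpacks precisely the argument behind that theorem, lifting an ascending chain through $\Sigma_n$ and a pullback along the unit, then recovering it via exactness of $\Gamma_n$ and the co-unit isomorphism. So you have reproved the cited black box rather than taken a genuinely different path. One small inaccuracy: Proposition~\ref{prop:gendeg2}(e) gives $P_0$ and $P_1$ as possibly \emph{infinite} sums of $\bS_{\nu}(\bV_n)$'s, and finiteness of $P_1$ is not automatic (it would require knowing the kernel of $P_0 \to M$ is finitely generated, which is part of what is being proved). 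Fortunately your argument only needs $P_0$ finite so that $\tilde M = \Delta_n(M)$ is a quotient of the finitely generated $Q_0$; the lift of $f$ to $g$ via Proposition~\ref{prop:gendeg2}(c) works componentwise for any sum $P_1$.
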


\begin{proof}
Any quotient of a locally noetherian category by a localizing subcategory is locally noetherian \cite[Ch.~III, Prop.~9, Cor.~1]{gabriel}.
\end{proof}

In particular, when $P$ is $\Com$ or $\Com^{\rn\ru}$, we have seen that $\Rep^{\pol}(\fh)$ is locally noetherian (Corollary~\ref{cor:comm-noeth}), and so $\Rep^{\pol}(\fh_n)$ is also locally noetherian. This proves Theorem~\ref{mainthm1}.

\subsection{An example} \label{ss:finite-ex}
Consider $\fW_1^+$ as a $\fW_1^+$ representation. This representation is not polynomial.  Indeed, it is spanned by $\{L_n\}_{n \geq 0}$ with $L_n := x^{n+1} \del_x$ in degree $n$. The element $v:=L_2$  generates the degree $\geq 2$ part of $\fW_1^+$, because $L_1^i v$ is proportional to $L_{2 + i}$.  If $\fW_1^+$ were polynomial then by Proposition \ref{prop:gendeg2} there would be a unique map $\Sym^2 \bV_1  \to \fW_1^+$ taking the distinguished generator $e$ of $\Sym^2 \bV_1$ to  $v$, and this map would be surjective.   The degree $6$ component of $\Sym^2 \bV_1$ is $3$ dimensional, spanned by $L_3 L_1 e, L_2 L_2 e,$ and $L_1 L_1 L_2 e$.  It follows from the relations $$L_3 L_1 v = L_2 L_2 v = L_1 L_1 L_2 v = 0,$$ that any map taking $e$ to $v$ is the zero map in degree $6$. Therefore no such map exists.  
	
It follows that $\fW_1$ is not polynomial as a $\fW_1$ representation. (If it were, then the restricted $\fW_1^+$ sub-representation  $\fW_1^+ \subseteq \fW_1$ would be polynomial).  We note that $\fW_1$ is isomorphic to $$\Hom_{\bk[x]}(\Omega^1_{\bk[x]}, \bk[x]) \subseteq (\Omega^1_{\bk[x]})^{\vee} \otimes \bk[x]$$ so it is contained in the tensor category generated by $\bV_1$ and $\bV_1^{\vee}$.
\section{Hilbert series} \label{s:hilbert}

In this final section, we study the Hilbert series and formal characters of polynomial representations of the Witt Lie algebra. The proofs rely on fine results concerning $\FS^{\op}$-modules established in \cite{To}, which is why we cannot treat general operads.

\subsection{Definitions}

Let $M$ be a polynomial representation of $\fgl_n$. Then $M$ decomposes into a direct sum of simple representations $\bS_{\lambda}(\bW_n)$, with $\ell(\lambda) \le n$. We assume that each simple representation has finite multiplicity. This decomposition induces a grading of $M$, where the degree $d$ piece is the sum of the simple representations with $\vert \lambda \vert=d$.

\begin{definition}
Let $m_d=\dim(M_d)$. The \defn{Hilbert series of $M$}, denoted $\rH_M(t)$, is the series $\sum_{d \ge 0} m_d t^d$. It belongs to $\bZ \lbb t \rbb$.
\end{definition}

We can also define a finer invariant that records the multiplicities in $M$. Let $\Lambda_n$ be the ring of symmetric polynomials in $n$ variables $x_1, \ldots, x_n$ with coefficients in $\bQ$. As a vector space, this has a basis given by Schur polynomials $s_{\lambda}(x_1,\dots,x_n)$ with $\ell(\lambda) \le n$. Let $\widehat{\Lambda}_n$ be the completion of this ring at the ideal of positive degree elements. An element of $\widehat{\Lambda}_n$ can be expressed as a formal sum $\sum_{\lambda} m_{\lambda} s_{\lambda}(x_1,\dots,x_n)$ with $m_{\lambda} \in \bQ$; it can also be expressed as a power series in the $x_i$ variables.

\begin{definition}
For $\ell(\lambda) \le n$, let $m_{\lambda}$ be the multiplicity of $\bS_{\lambda}(\bW_n)$ in $M$. The \defn{formal character} of $M$, denoted $\ch(M)$, is $\sum_{\lambda} m_{\lambda} s_{\lambda}(x_1,\dots,x_n)$. It is an element of $\widehat{\Lambda}_n$.
\end{definition}

There is a map $\widehat{\Lambda}_n \to \bQ\lbb t \rbb$ induced by $x_i \mapsto t$ for all $1 \le i \le n$. Under this map, the formal character of $M$ specializes to its Hilbert series.

Suppose now that we are in the setting of \S \ref{ss:setup}. If $M$ is a polynomial $\fh_n$-module then it is also a polynomial $\fgl_n$-module, and so the above definitions apply, assuming each simple $\fgl_n$-module has finite multiplicity in $M$. Suppose that each Schur functor has finite multiplicity in $P$ (which is the case for all examples in \S \ref{ss:opex}). Then every simple has finite multiplicity in $\bV_n=P\{\bW_n\}$, and thus in all of its tensor powers. Thus the same is true for any finitely generated polynomial $\fh_n$-module by Proposition~\ref{prop:gendeg2}(e), and so Hilbert series and formal characters are defined for these modules.

\subsection{Statement of results}

The following results are our main results on Hilbert series. In what follows, $M$ denotes a finitely generated polynomial $\fW_n^{\pos}$-module. We begin with results on Hilbert series.

\begin{theorem} \label{thm:hilbert}
The Hilbert series of $M$ is a rational function of $t$ whose denominator is a product of polynomials of the form $1-t^m$.
\end{theorem}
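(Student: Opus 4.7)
My plan is to reduce Theorem~\ref{thm:hilbert} to a rationality result for finitely generated $\FS^{\op}$-modules proved in \cite{To}, using the specialization machinery from \S \ref{s:findim} together with the equivalence of Theorem~\ref{mainthm3}.

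The first step is to lift $M$ to a finitely generated polynomial $\fW^{\pos}$-module $\tilde{M}$ with $\Gamma_n(\tilde{M}) \cong M$. The construction in the proof of Proposition~\ref{prop:ess-surj} produces such a lift: present $M$ by finitely many modules $\bS_\nu(\bV_n)$ with $\ell(\nu) \leq n$ using Proposition~\ref{prop:gendeg2}(e), lift the presenting map via Proposition~\ref{prop:gendeg2}(c) to a map $g \colon Q_1 \to Q_0$ between the corresponding finite sums of $\bS_\nu(\bV)$'s, and set $\tilde{M} := \coker(g)$. Via the $\fW^{\pos}$-version of Theorem~\ref{mainthm3} (see Remark~\ref{rmk:mainthm3}), this $\tilde{M}$ corresponds to a finitely generated $\FS^{\op}$-module $N$.

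The second step is to express $\rH_M(t)$ in terms of $N$. Decomposing $\tilde{M} \cong \bigoplus_\lambda m_\lambda \bS_\lambda(\bW)$ as a $\fgl$-module (the multiplicities $m_\lambda$ are finite by the discussion preceding the theorem) and using that $\Gamma_n$ sends $\bS_\lambda(\bW)$ to $\bS_\lambda(\bW_n)$, which vanishes for $\ell(\lambda) > n$, we get
\begin{displaymath}
\rH_M(t) = \sum_{\ell(\lambda) \leq n} m_\lambda \cdot \dim \bS_\lambda(\bW_n) \cdot t^{|\lambda|},
\end{displaymath}
which is exactly the specialization of the formal $\fgl$-character $\sum_\lambda m_\lambda s_\lambda$ of $\tilde{M}$ at $x_1 = \cdots = x_n = t$ with $x_i = 0$ for $i > n$. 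Under the equivalence of Theorem~\ref{mainthm3}, the multiplicities $m_\lambda$ are determined by combinatorial invariants of the $\FS^{\op}$-module $N$.

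Finally, I would invoke \cite{To}, whose main rationality theorem for finitely generated $\FS^{\op}$-modules yields rationality of this specialized character series with denominator a product of factors $1 - t^m$. The principal obstacle will be matching the specific finite-variable specialization needed here — $n$ variables set to $t$ and all others to $0$ — to the generating function treated in \cite{To}; concretely, one must extract from Tosteson's framework the statement that $\sum_\lambda m_\lambda\, s_\lambda(1,\ldots,1)\, t^{|\lambda|}$ (with $n$ ones) is a rational function of $t$ with denominator of the claimed form. The earlier lifting steps are essentially formal; all of the analytic content of the theorem is concentrated in this last invocation.
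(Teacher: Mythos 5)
Your reduction strategy matches the paper's almost exactly: lift $M$ via $\Gamma_n$ to a finitely generated $\fW^{\pos}$-module (using Proposition~\ref{prop:gendeg2} and the $\Delta_n$ construction, as in the paper's Lemma~\ref{lem:formalcharacter}), pass through the equivalence of Remark~\ref{rmk:mainthm3} to an $\FS^{\op}$-module, and identify $\rH_M(t)$ with the specialization of the $\fgl$-character at $x_1 = \cdots = x_n = t$, $x_i = 0$ for $i > n$. These steps are correct, and the paper does the same thing (it actually proves the finer Theorem~\ref{thm:formalcharacter} about the formal character as a function of $x_1, \ldots, x_n$ and deduces the Hilbert series statement by further setting $x_i = t$).

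However, the final step contains a genuine gap. You write that you would "invoke \cite{To}, whose main rationality theorem for finitely generated $\FS^{\op}$-modules yields rationality of this specialized character series with denominator a product of factors $1-t^m$," and flag the matching-of-specializations issue as the "principal obstacle." But there is no such off-the-shelf theorem in \cite{To}. What \cite{To} supplies (Theorem~\ref{thm:tosteson}, quoting \cite[Theorems 1.2, 11.1]{To}) is a structural statement: $\ch(M)$ lies in a finite sum of subspaces $\cV_{A_i,r_i} \cap \cF_{\le k}$ of $\widehat\Lambda$. The passage from membership in $\cV_{A,r}\cap\cF_{\le k}$ to the claim that $\pi_n$ of such an element is a rational function with denominator a product of $(1-x_i^m)$'s is the paper's own Theorem~\ref{componentwiserationality}, whose proof is the technical heart of Section~\ref{s:hilbert}: it uses the expansion \eqref{expansion} and the identity \cite[Equation 11.9]{To} in the finite-dimensional ring $R_{r,k,A}$, exponentiates, and tracks nilpotency of the $E_\nu$ to extract the rational form. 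Your proposal does not supply this argument, so while the formal reduction is sound and you have correctly located where the content lies, the proof as written is incomplete at precisely the step that carries all the weight.
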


\begin{corollary}
The function $n \mapsto \dim(M_n)$ is a quasi-polynomial of $n$, for $n \gg 0$.
\end{corollary}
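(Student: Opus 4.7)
The plan is to derive the corollary directly from Theorem~\ref{thm:hilbert} via the classical translation between rational generating functions (with denominators that are products of factors $1-t^{m_i}$) and eventually quasi-polynomial coefficients. In fact, these two conditions on $\rH_M(t)$ are equivalent, so this implication is routine; no further representation-theoretic input is needed beyond Theorem~\ref{thm:hilbert}.

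Concretely, I would proceed as follows. First, invoke Theorem~\ref{thm:hilbert} to write $\rH_M(t)=P(t)/\prod_{i=1}^{r}(1-t^{m_i})$ for some $P(t)\in\bZ[t]$ and positive integers $m_1,\dots,m_r$. Set $N=\operatorname{lcm}(m_1,\dots,m_r)$, and clear denominators by multiplying numerator and denominator by $\prod_{i}(1-t^{N})/(1-t^{m_i})$, obtaining $\rH_M(t)=Q(t)/(1-t^{N})^{r}$ for some $Q(t)\in\bZ[t]$. Then expand
\begin{equation*}
\frac{1}{(1-t^{N})^{r}}=\sum_{j\ge 0}\binom{j+r-1}{r-1}t^{jN},
\end{equation*}
and read off the coefficient of $t^{n}$ in $Q(t)\cdot(1-t^{N})^{-r}$. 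For $n$ strictly larger than $\deg Q$, this coefficient is an integer combination of values $\binom{(n-k)/N+r-1}{r-1}$ indexed by the unique $k\le\deg Q$ with $n\equiv k\pmod{N}$. Within each residue class of $n$ modulo $N$, this is a polynomial in $n$ of degree at most $r-1$, so $n\mapsto\dim(M_n)$ is a quasi-polynomial of period dividing $N$ for $n\gg 0$.

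The only subtlety, and the reason the statement is restricted to $n\gg 0$, is that small values of $n$ can deviate from the quasi-polynomial formula due to the finitely many contributions coming from monomials in $Q(t)$ of degree exceeding $n$; once $n>\deg Q$ these contributions vanish and the formula is exact. There is no genuine obstacle to overcome, as the hard work was already done in establishing Theorem~\ref{thm:hilbert}.
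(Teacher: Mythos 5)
Your proof is correct and supplies the standard translation between rational Hilbert series with cyclotomic-type denominators and quasi-polynomiality of coefficients, which is exactly the (implicit) content of the paper's corollary — the paper states it without proof as an immediate consequence of Theorem~\ref{thm:hilbert}. One small slip: when $\deg Q \ge N$ there may be several $k \le \deg Q$ with $k \equiv n \pmod N$, so the coefficient is a sum over \emph{all} such $k$ rather than a single term; this does not affect the conclusion, since each summand is still a polynomial of degree $\le r-1$ in $n$ on the given residue class.
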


We now turn to the formal character:

\begin{theorem}\label{thm:formalcharacter}
The formal character of $M$ is a rational function of the variables $x_i$, whose denominator is a product of polynomials of the form $1-x_i^m$. Moreover, if $M$ is a subquotient of a representation generated in degrees $\le d$ then the exponents $m$ satisfy $m \le d$.
\end{theorem}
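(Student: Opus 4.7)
My plan is to lift $M$ to a finitely generated polynomial $\fW^{\pos}$-module, translate it to a finitely generated $\FS^{\op}$-module via Theorem~\ref{thm:equiv}, and combine the plethystic structure of the $\fgl$-characters of the projective generators $\bS_\lambda(\bV)$ with a character-level refinement of Tosteson's rationality results from \cite{To}.

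First, by the essential surjectivity of $\Gamma_n$ (Proposition~\ref{prop:ess-surj}), together with its proof, I would lift $M$ to a finitely generated polynomial $\fW^{\pos}$-module $\tilde M$ satisfying $\Gamma_n(\tilde M) = M$. If $M$ is a subquotient of a representation generated in degrees $\le d$, then by Proposition~\ref{prop:gendeg2}(e) the enveloping representation may be presented by a sum of $\bS_{\nu_i}(\bV_n)$ with $|\nu_i| \le d$, and by Proposition~\ref{prop:gendeg2}(c) this presentation lifts uniquely to one involving the corresponding $\bS_{\nu_i}(\bV)$, so that $\tilde M$ is again a subquotient of a representation generated in degrees $\le d$. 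Since $\Gamma_n$ corresponds to restriction of polynomial functors to $\bW_n$, the character $\ch(M) \in \widehat{\Lambda}_n$ is obtained from the symmetric function $\ch(\tilde M)$ by setting $x_{n+1}=x_{n+2}=\cdots=0$, a specialization that sends $s_\lambda$ to $s_\lambda(x_1,\ldots,x_n)$ when $\ell(\lambda)\le n$ and to zero otherwise. It therefore suffices to prove the corresponding rationality statement for $\ch(\tilde M)$ viewed as a symmetric function in the $x_i$.

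Via Theorem~\ref{thm:equiv} (in the case $P=\Com^{\rn\ru}$), $\tilde M$ corresponds to a finitely generated $\FS^{\op}$-module $N$ generated in cardinalities $\le d$, with $N([k]) = \Hom_{\fh}(\bV^{\otimes k}, \tilde M)$. The indecomposable projectives have plethystic characters: since $\bV=\Sym_{\ge 1}(\bW)$ has character $H-1$ with $H=\prod_i(1-x_i)^{-1}$, we have $\ch(\bS_\lambda(\bV))=s_\lambda[H-1]$. Expanding $s_\lambda$ in the power-sum basis and using $p_k[H-1]=\prod_i(1-x_i^k)^{-1}-1$ expresses each $\ch(\bS_\lambda(\bV))$ as a rational symmetric function whose denominator is a product of factors $\prod_i(1-x_i^k)$ with $k \le |\lambda| \le d$. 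To assemble these into a finite closed-form expression for $\ch(\tilde M)$, I would combine a projective resolution of $N$ with the $\FS^{\op}$-noetherianity of Corollary~\ref{cor:comm-noeth} and the character-level refinement of Tosteson's rationality theorem, obtaining an Euler-characteristic-style formula for $\ch(\tilde M)$ whose denominator is controlled by the generation degree $d$.

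The main obstacle is this last step: the results of \cite{To} are stated for the one-variable Hilbert series of $\FS^{\op}$-modules, and I would need to extract (or reprove) a symmetric-function refinement, and to reconcile the combinatorial denominator bound on the $\FS^{\op}$ side with the plethystic denominator $\prod_i(1-x_i^m)$, $m\le d$, on the Lie-algebra side. Once the refinement is in hand, restricting to $n$ variables $x_1,\ldots,x_n$ produces $\ch(M)$ as a rational function whose denominator is a product of factors $1-x_i^m$ with $m \le d$, proving the theorem.
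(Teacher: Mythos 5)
Your first two paragraphs match the paper's reduction (Lemma~\ref{lem:formalcharacter}): lift $M$ along $\Gamma_n$ to a finitely generated polynomial $\fW^{\pos}$-module while controlling the generation degree (the paper does this via the left adjoint $\Delta_n$), translate to a finitely generated $\FS^{\op}$-module, and note that $\ch(M) = \pi_n(\ch(\tilde M))$. The plethystic computation $\ch(\bS_\lambda(\bV)) = s_\lambda[H-1]$ with $p_k[H-1] = \prod_i(1-x_i^k)^{-1} - 1$ is also correct and shows the projectives behave as desired.

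The final assembly step, however, has a genuine gap. First, a factual point: \cite{To} is \emph{not} restricted to one-variable Hilbert series — its main results (\cite[Theorems 1.2, 11.1]{To}, quoted here as Theorem~\ref{thm:tosteson}) concern the full symmetric-function character of a finitely generated $\FS^{\op}$-module, asserting that it lies in a finite sum of the finite-dimensional spaces $\cV_{A_i,r_i} \cap \cF_{\le k}$ with $|A_i| \le d$. So the ``symmetric-function refinement'' you worry about having to reprove already exists, and the paper applies it directly. Second, and more seriously, the Euler-characteristic-from-projective-resolutions idea would not yield the theorem even granting the character formulas for the projectives: $\Rep(\FS^{\op})$ has infinite global dimension, so a minimal projective resolution of $N$ is generically infinite, and there is no a priori bound ensuring that the generation degrees of the higher syzygies stay $\le d$ (or stay bounded at all in terms of $d$). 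Without such a regularity bound — which is not established in this setting and appears to be a hard open structural question — the alternating sum of $\ch(P_i)$ does not give a closed rational expression, and the claimed denominator bound $m \le d$ cannot be extracted. The paper avoids this entirely: it takes Tosteson's membership statement as input, and then proves Theorem~\ref{componentwiserationality} by an explicit computation (generating-function manipulations in the ring $R_{r,k,A}$, using the basis $\{E_\nu\}$ and the identity~\eqref{expansion}) showing that the specialization $\pi_n$ of any element of $\cV_{A,r} \cap \cF_{\le k}$ is a rational function with denominator a product of $(1-x_i^m)$, $m \le |A| \le d$. That computational step is the core of the argument and is absent from your outline.
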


Theorem~\ref{thm:hilbert} follows from Theorem~\ref{thm:formalcharacter} by specializing the $x_i$'s to $t$. Thus it suffices to prove the latter theorem, and this is what we do in the remainder of this section.

\begin{remark}
The above results also apply if $M$ is a finitely generated polynomial $\fW_n$-module, since we can simply restrict to $\fW_n^{\pos}$. In particular, they yield Theorem~\ref{mainthm2}.
\end{remark}

\subsection{A reduction}

Let $\Lambda$ be the ring of symmetric functions in variables $\{x_i\}_{i \ge 1}$, and let $\widehat{\Lambda}$ be its completion at its ideal of positive degree elements. If $M$ is a polynomial $\fgl$-module in which all Schur functors $\bS_\lambda$ have finite multiplicity $m_\lambda$, then we can define its formal character
\[
  \ch(M) = \sum_\lambda m_\lambda s_\lambda
\]
as an element of $\widehat{\Lambda}$, where now $s_\lambda$ is the Schur function in $\{x_i\}$.

Now suppose that $N$ is an $\FB$-module such that $N([n])$ is finite dimensional for all $n$. Letting $m_{\lambda}$ be the multiplicity of the Specht module $S^{\lambda}$ in $N([n])$ (where $n=\vert \lambda \vert$), we define the formal character of $N$ to be $\sum_{\lambda} m_{\lambda} s_{\lambda}$. These two notions of formal character are compatible with Schur--Weyl duality: if $M \in \Rep^{\pol}(\fgl)$ corresponds to $N \in \Rep(\FB)$ then $\ch(M)=\ch(N)$.

There is a specialization homomorphism
\begin{displaymath}
\pi_n \colon \widehat{\Lambda} \to \widehat{\Lambda}_n
\end{displaymath}
induced by the rule $\pi_n(x_i)=x_i$ for $1 \le i \le n$ and $\pi_n(x_i)=0$ for $i>n$. This maps the symmetric function $s_{\lambda}$ to the symmetric polynomial $s_{\lambda}(x_1,\dots,x_n)$ if $\ell(\lambda) \le n$, and to~0 otherwise. This homomorphism is compatible with the specialization functor $\Gamma_n$; that is, if $M$ is a finitely generated polynomial $\fW^{\pos}$-module then
\begin{displaymath}
\pi_n(\ch(M)) = \ch(\Gamma_n(M)).
\end{displaymath}
Indeed, everything above simply depends on the underlying $\fgl$-module, and can therefore be checked in the case $M=\bS_{\lambda}(\bW)$, where it is clear.

In the remainder of this section, we prove the following theorem:

\begin{theorem} \label{thm:FSopRationality}
Let $M$ be a finitely generated $\FS^\op$-module. Then for every $n$, the specialization $\pi_n(\ch(M))$ is a rational function of the $x_i$'s whose denominator is a product of factors $1-x_i^m$. Moreover, if $M$ is a subquotient of an $\FS^{\op}$-module generated in degrees $\le d$ then the exponents $m$ satisfy $m \le d$.
\end{theorem}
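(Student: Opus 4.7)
My plan is to pass through the equivalence of Theorem~\ref{thm:equiv}, reduce the claim to a direct computation on principal projective $\FS^{\op}$-modules, and then invoke the structural results of \cite{To} to extend to all finitely generated $\FS^{\op}$-modules while retaining sharp denominator control.

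First I would observe that under the equivalence $\Rep(\FS^{\op}) \simeq \Rep^{\pol}(\fh)$ with $P = \Com^{\rn\ru}$, the composition $\pi_n \circ \ch$ on the $\FS^{\op}$ side is precisely the $\fgl_n$-character of $\Gamma_n(M')$, where $M'$ is the polynomial $\fh$-module matching $M$. The hypothesis that $M$ is a subquotient of an $\FS^{\op}$-module generated in degrees $\le d$ translates into $M'$ being a subquotient of a finite direct sum of modules $\bV^{\otimes d_i}$ with $d_i \le d$, and under $\Gamma_n$ this becomes a subquotient of a sum of $\bV_n^{\otimes d_i}$.

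Next I would compute the character in the principal projective case. The principal projective $P_{[d]}$ on the $\FS^{\op}$ side corresponds to $\bV^{\otimes d}$, and so
\begin{displaymath}
\pi_n(\ch(P_{[d]})) \;=\; \ch(\bV_n^{\otimes d}) \;=\; \Bigl(\sum_{k \ge 1} h_k(x_1, \ldots, x_n)\Bigr)^{d} \;=\; \Bigl(\tfrac{1}{\prod_{i=1}^n (1-x_i)} - 1\Bigr)^{d},
\end{displaymath}
a rational function whose denominator $\prod_i (1-x_i)^d$ is a product of factors $1-x_i^m$ with $m = 1 \le d$.

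Finally I would extend from principal projectives to a general finitely generated $M$. Local noetherianity (Corollary~\ref{cor:comm-noeth}) gives a resolution of $M$ by finite direct sums of principal projectives, but kernels can acquire higher-degree generators, so naive induction on such a resolution does not preserve the bound $m \le d$. The essential input here must come from \cite{To}, in the form of a finite filtration or Koszul-type resolution of $M$ controlled by the generation degree, whose associated pieces have explicit characters of the prescribed denominator shape. Combined with closure of the admissible class of rational functions under sums and differences, this yields the theorem. The main obstacle will be this sharp exponent bound $m \le d$: rationality alone would follow from noetherianity plus closure of rational functions under differences, whereas fine control of denominators in terms of the generation degree comes down to tracking how degree bounds propagate through a resolution, which is the substantive content imported from \cite{To}.
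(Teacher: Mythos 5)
Your proposal correctly identifies the hard part and the easy part, but stops short of a proof exactly at the hard part, and the one sentence you devote to it misguesses what the imported machinery from \cite{To} looks like.

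What works: your computation of the principal projective character is correct, since $P_{[d]}$ corresponds to $\bV^{\otimes d}$ and $\ch(\bV_n^{\otimes d}) = \bigl(\tfrac{1}{\prod_i(1-x_i)} - 1\bigr)^d$. Your diagnosis is also right that a naive projective resolution, though available from noetherianity, destroys the degree bound because the syzygies can be generated in high degree; the sharp bound $m \le d$ really is the entire content. (Also, the first paragraph of your argument, translating to the Lie algebra side via Theorem~\ref{thm:equiv}, is superfluous here: Theorem~\ref{thm:FSopRationality} is a statement purely about $\FS^{\op}$-modules and is the \emph{input} to the equivalence argument, not something deduced from it --- the paper's Lemma~\ref{lem:formalcharacter} goes from $\FS^{\op}$ to $\fW^{\pos}_n$, not the other way.)

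Where the gap is: you write that the essential input from \cite{To} ``must be'' a finite filtration or Koszul-type resolution with controlled generation degree. That is not what the paper uses, and there is no such resolution in evidence. The actual input is Theorem~\ref{thm:tosteson}: for $M$ a subquotient of an $\FS^{\op}$-module generated in degrees $\le d$, the formal character $\ch(M)$ lies in a finite sum of subspaces $\cV_{A_i,r_i} \cap \cF_{\le k}$ of $\widehat\Lambda$ with $|A_i| \le d$. This is a constraint on the symmetric function $\ch(M)$ itself, not a structural decomposition of the module. The remaining (and substantive) work is Theorem~\ref{componentwiserationality}: a symmetric-function calculation showing that every element of $\cV_{A,r} \cap \cF_{\le k}$ specializes under $\pi_n$ to a rational function with denominator a product of $(1-x_i^m)$ with $m \le |A|$. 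That argument runs through the expansion~\eqref{expansion} of $s$ in terms of $p_\lambda u_\lambda / z_\lambda$ against the character exponential $e^A$, the finite-dimensional ring $R_{r,k,A}$ and the nilpotent generators $E_\nu$, a generating-function identity for $\sum u_n t^n / n$, and the plethystic substitution $t^n \mapsto p_n$ followed by exponentiation. None of this is visible from the resolution perspective, and your proposal as written would not produce it. To complete the proof you would need to state and use Theorem~\ref{thm:tosteson} in its actual form and then supply the computation of Theorem~\ref{componentwiserationality}.
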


We now show that this is sufficient:

\begin{lemma} \label{lem:formalcharacter}
Theorem~\ref{thm:FSopRationality} implies Theorem~\ref{thm:formalcharacter}.
\end{lemma}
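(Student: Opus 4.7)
The plan is to lift a finitely generated polynomial $\fW_n^{\pos}$-module $M$ through the specialization functor $\Gamma_n$ to a finitely generated polynomial $\fW^{\pos}$-module $\tilde{M}$, then use the equivalence of Theorem~\ref{thm:equiv} (in the $\Com^{\rn\ru}$ case) to translate $\tilde{M}$ into a finitely generated $\FS^{\op}$-module $N$, and finally invoke Theorem~\ref{thm:FSopRationality}. The character identity $\ch(M) = \ch(\Gamma_n(\tilde{M})) = \pi_n(\ch(\tilde{M})) = \pi_n(\ch(N))$ stated just above the theorem will then translate rationality of $\pi_n(\ch(N))$ into the claimed rationality of $\ch(M)$.

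For the bare rationality statement, I would construct $\tilde{M}$ directly. Starting from a finite presentation $P_1 \xrightarrow{f} P_0 \to M \to 0$ provided by Proposition~\ref{prop:gendeg2}(e) together with local noetherianity of $\Rep^{\pol}(\fW_n^{\pos})$ (a consequence of Corollary~\ref{cor:comm-noeth} via Proposition~\ref{prop:serre}), where each $P_i$ is a finite direct sum of modules $\bS_\nu(\bV_n)$ with $\ell(\nu)\le n$, I would lift $f$ uniquely to a map $g\colon Q_1 \to Q_0$ of the corresponding sums of $\bS_\nu(\bV)$'s using Proposition~\ref{prop:gendeg2}(c), and take $\tilde{M} = \coker(g)$. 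Exactness of $\Gamma_n$ and the identity $\Gamma_n(\bS_\nu(\bV)) = \bS_\nu(\bV_n)$ give $\Gamma_n(\tilde{M}) = M$, and $\tilde{M}$ is clearly finitely generated.

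For the finer statement on the exponents, assume $M$ is a subquotient of some $L$ that is generated in degrees $\le d$. I would first reduce to the case where $L$ is of the form $\bigoplus_i \bS_{\nu_i}(\bV_n)$ with $|\nu_i|\le d$, by pulling back along a surjection from such a sum; and then, using that finitely many generators of $M$ live in a finite subsum, further reduce to $L$ being a \emph{finite} such sum. Now lift $L$ canonically to $\tilde{L} = \bigoplus_i \bS_{\nu_i}(\bV)$, which is finitely generated in degrees $\le d$. Writing $M = A/B$ with $B \subseteq A \subseteq L$, I would use the Serre quotient structure (Proposition~\ref{prop:serre}): submodules of $L = \Gamma_n(\tilde{L})$ correspond to saturated submodules of $\tilde{L}$, so $A$ and $B$ admit lifts $\tilde{B} \subseteq \tilde{A} \subseteq \tilde{L}$. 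Local noetherianity of $\Rep^{\pol}(\fW^{\pos})$ guarantees $\tilde{A}$ is finitely generated, hence so is $\tilde{M} := \tilde{A}/\tilde{B}$, which is a subquotient of $\tilde{L}$ generated in degrees $\le d$ and satisfies $\Gamma_n(\tilde{M}) = M$. Translating through Theorem~\ref{thm:equiv}, $\tilde{L}$ corresponds to a sum of principal projectives $\Hom_{\FS^{\op}}([|\nu_i|],-)$ (tensored with Specht components), which is generated in $\FS^{\op}$-degrees $\le d$; hence $N$ is a subquotient of an $\FS^{\op}$-module generated in degrees $\le d$, and Theorem~\ref{thm:FSopRationality} yields the required bound $m \le d$ on the denominator exponents.

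The main obstacle I expect is step two: verifying that the subquotient data lifts compatibly through the Serre quotient $\Gamma_n$, and then checking that ``generated in degrees $\le d$'' on the $\fW^{\pos}$-side corresponds under Theorem~\ref{thm:equiv} to the same condition on the $\FS^{\op}$-side. The latter reduces to observing that under the equivalence, the object $\bV^{\otimes k}$ corresponds to the principal projective at $[k]$, so that $\bS_\nu(\bV)$ corresponds to a summand of the principal projective at $[|\nu|]$; this pins down the matching of degree filtrations. The rest of the argument is a routine assembly of results already in hand.
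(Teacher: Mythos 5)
Your proof is correct and follows the paper's overall strategy (lift $M$ through $\Gamma_n$ to a finitely generated polynomial $\fW^{\pos}$-module, translate via Theorem~\ref{thm:equiv} to an $\FS^{\op}$-module, invoke Theorem~\ref{thm:FSopRationality}, and pull the character identity back through $\pi_n$). The one genuine point of divergence is in how you construct the lift of the subquotient: you pass to saturated submodules $\tilde{B} \subseteq \tilde{A} \subseteq \tilde{L}$ via the Serre-quotient machinery (implicitly the right adjoint $\Sigma_n$), whereas the paper uses the left adjoint $\Delta_n$ and the image construction
\[
M' = \frac{\im\bigl(\Delta_n(K) \to \Delta_n(F)\bigr)}{\im\bigl(\Delta_n(J) \to \Delta_n(F)\bigr)},
\]
which packages both the ``bare rationality'' and ``degree bound'' cases in one formula and makes $\Gamma_n(M') \cong M$ an immediate consequence of exactness of $\Gamma_n$ and $\Gamma_n\Delta_n \cong \id$. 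Your saturation-based lift is the largest preimage, the paper's image-based lift is essentially the smallest; both are valid, but the paper's is a bit more economical since it doesn't require invoking local noetherianity to guarantee finite generation of the lift nor the preliminary reduction to a finite sum of $\bS_\nu(\bV_n)$'s (the paper just starts from a finite presentation by Proposition~\ref{prop:gendeg2}(e)). Your final paragraph on matching ``generated in degrees $\le d$'' across the equivalence — via $\bV^{\otimes k}$ corresponding to the principal projective at $[k]$ — makes explicit a point the paper treats as tacit; it is correct and worth stating.
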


\begin{proof}
  Let $M$ be a given finitely generated polynomial $\fW^{\pos}_n$-module. By Proposition~\ref{prop:ess-surj}, $M$ is in the essential image of the specialization functor $\Gamma_n$. The proof of the corollary shows that we have $M \cong \Gamma_n(M')$ for some \emph{finitely generated} polynomial $\fW^{\pos}$-module $M'$; even better, if $M$ is a subquotient of a $\fW^{\pos}_n$-module generated in degrees $\le d$ then we can take the same to be true for $M'$ . Indeed,  if $M \cong K/J$ for
  \[
    J \subseteq K \subseteq  F = \bigoplus_{i = 1}^r \bS_{\lambda_i}(\bV_n)
  \]
  with $\ell(\lambda_i) \leq n$ and $|\lambda_i| \leq d$ then we may take
  \[
    M' = \frac{\im(\Delta_n(K) \to \Delta_n(F))}{\im(\Delta_n(J) \to \Delta_n(F))},
  \]
  since by exactness of $\Gamma_n$  we have
  \[
    \Gamma_n\left(\frac{\im(\Delta_n(K) \to \Delta_n(F))}{\im(\Delta_n(J) \to \Delta_n(F))}\right) \cong
    \frac{ \im(\Gamma_n \Delta_n(K) \to \Gamma_n\Delta_n(F)) }{ \im(\Gamma_n \Delta_n(J) \to \Gamma_n \Delta_n(F))} \cong K/J.
  \]
  Let $M''$ be the $\FS^{\op}$-module corresponding to $M'$ under the equivalence in Theorem~\ref{thm:equiv}. We then have
\begin{displaymath}
\ch(M) = \pi_n(\ch(M')) = \pi_n(\ch(M'')),
\end{displaymath}
where in the first step we used the compatibility of formal characters with specialization, and in the second the fact that $M'$ and $M''$ have the same formal character. Theorem~\ref{thm:FSopRationality} now shows that $\ch(M)$ has the required form.
\end{proof}

\subsection{Review of \cite{To}}

To prove Theorem~\ref{thm:FSopRationality}, we make use of the results from \cite{To} on characters of $\FS^{\op}$-modules, which we now review (see \cite[\S11]{To} for a more detailed summary). We begin by introducing some subspaces of $\widehat{\Lambda}$.

\begin{definition}
Let $\cF_{\leq k}$ be the $\bQ$-subspace of $\widehat{\Lambda}$ consisting of elements of the form
\begin{displaymath}
\sum_{\ell(\lambda) \le k} c_\lambda  s_\lambda,
\end{displaymath}
where $\lambda$ is a partition, $c_\lambda \in \bQ$, and $s_{\lambda}$ is the Schur function associated to $\lambda$.           
\end{definition}

\begin{definition}
Let $A$ be a partition and let $m_i(A)$ be the multiplicity of $i$ in $A$. Let $p_d = \sum_i x_i^d$ be the power sum and for a partition $\lambda$ let $p_\lambda = \prod_{i \geq 1} p_i^{m_i(\lambda)}$ denote the power sum symmetric function. We define
\begin{displaymath}
e^{A} := \exp\left( \sum_{i \geq 1} m_i(A) \sum_{n \geq 1}  \frac{p_{ni}}{n}\right) =   \prod_{i \geq 1} \prod_{\alpha} \left(\frac{1}{1- x_\alpha^{i}} \right)^{m_i(A)}
\end{displaymath}
and let $\cV_{A,r} \subseteq \widehat \Lambda$ be the subspace of symmetric functions which can be written as formal sums of the form $$\sum_{\ell(\lambda) < r } c_\lambda p_\lambda e^{A}$$ where $c_\lambda \in \bQ$.
\end{definition}

\begin{remark}
 	We use the notation $e^A$ because under the specialization homomorphism $p_1 \mapsto t$ and $p_i \mapsto 0$ for $i > 1$,  the symmetric function $e^A$ maps to an exponential.  In \cite{To} the class function on symmetric groups corresponding to $e^A$ was referred to as a \emph{character exponential}, in analogy to the classical  notion of a character polynomial.  We have the alternate expression for $e^A$  $$e^A = p_A\left[\sum_{m \geq 0} h_m\right],$$ where the square brackets denotes the operation of plethysm.  
\end{remark}

The following is the main theorem we will need.

\begin{theorem}[{\cite[Theorems 1.2, 11.1]{To}}] \label{thm:tosteson}
Let $M$ be a finitely generated $\FS^{\op}$-module that is a subquotient of one generated in degrees $\le d$. Then $\ch(M)$ is contained in the sum of the spaces
\begin{displaymath}
\cV_{A_i,r_i} \cap \cF_{\leq k}, \qquad i = 1, \dots, N
\end{displaymath}
for a finite list of pairs $(A_i, r_i)$  such that $|A_i| \leq d$.
\end{theorem}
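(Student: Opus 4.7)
The plan is to reduce the problem to a manageable class of ``standard'' building block $\FS^{\op}$-modules whose characters can be computed explicitly, then leverage noetherianity of $\Rep(\FS^{\op})$ to extend to arbitrary finitely generated modules. Roughly, for an $\FS^{\op}$-module $M$ generated in degrees $\le d$, I would seek a finite filtration whose successive quotients are of the form ``induced up from a representation of an automorphism group of a combinatorial type of size $\le d$.'' Each such type should correspond to a partition $A$ with $|A| \le d$, representing the ``fixed combinatorial skeleton'' underneath the free $\FS^{\op}$ structure above it.

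First I would set up the structure of principal projectives $P_n(S) = \bk[\Hom_{\FS^{\op}}(S,[n])] = \bk[\mathrm{Surj}(n,S)]$ and compute their characters directly. An element of $P_n(S)$ records a surjection from $[n]$ onto $S$, i.e.\ a partition of $[n]$ into $|S|$ non-empty labeled blocks. The character of $P_n$ should factor into two pieces: a ``free'' piece accounting for how many times each block type appears, yielding precisely a factor of the form $e^A$ for $|A| = n$; and a combinatorial piece for the action of the automorphism group of the block structure, giving the $p_\lambda$ factors. The identity
\begin{displaymath}
\sum_{m \ge 0} h_m = \prod_\alpha \frac{1}{1 - x_\alpha}
\end{displaymath}
together with the plethysm identity $e^A = p_A[\sum_m h_m]$ should be the combinatorial engine converting the surjection counts into character exponentials.

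Next I would handle subquotients and the $\cF_{\le k}$ constraint. Because $\FS^{\op}$-modules are noetherian, any subquotient of a finitely generated module is finitely generated; passing to a submodule can only decrease the degree of generation. Combined with a filtration by stabilizer types, this should show that characters of subquotients of $P_n$ lie in a sum of spaces $\cV_{A_i,r_i}$ with $|A_i| \le n$. The containment in $\cF_{\le k}$ reflects that the support of the character, on partitions of length exceeding $k$, vanishes due to a bound on the number of rows of Specht modules appearing — this bound should come either from the module being supported on sets of bounded size per graded piece or from a torsion-theoretic/localization argument that confines characters to a small number of row-type partitions.

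The main obstacle I expect is step two: explicitly identifying the irreducible representations of the stabilizer groups (which are products of wreath products $\fS_{m_i(A)} \wr \fS_i$) and showing their Frobenius characters, after multiplication by the free piece $e^A$, live in $\cV_{A,r}$ with $r$ controlled. A secondary obstacle is proving that the filtration is finite and its quotients have $|A_i| \le d$, which requires a genuinely structural input about finitely generated $\FS^{\op}$-modules beyond mere noetherianity — probably a Gr\"obner-style argument or an induction along a well-ordering of types, both of which are the substance of the techniques developed in \cite{To}.
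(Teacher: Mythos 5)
This theorem is cited verbatim from \cite[Theorems 1.2, 11.1]{To}; the present paper states it without proof, so there is no in-paper argument for you to match. Your sketch lands in roughly the right neighborhood — filtration by combinatorial types, characters of standard pieces via wreath-product permutation modules, noetherianity as a finiteness input — but it stops at exactly the two places you yourself flag as obstacles, and your resolution of them is ``this is the substance of \cite{To},'' which is circular. A proof would have to actually produce the filtration, bound its length and the sizes $|A_i|$, and carry out the character computation for the standard subquotients; none of that is attempted here.

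Beyond the deferral, three concrete points would fail as written. First, the principal projective is backwards: for $\FS^{\op}$-modules, $P_n(S) = \bk[\Hom_{\FS^{\op}}([n],S)] = \bk[\mathrm{Surj}(S,[n])]$ records surjections \emph{from $S$ onto $[n]$}, not from $[n]$ onto $S$ as you write. This matters: your version $\bk[\mathrm{Surj}([n],S)]$ vanishes for $|S|>n$, so it would have a finitely supported (hence polynomial) character, whereas the actual $P_n(S)$ is nonzero for all $|S|\ge n$ with dimension growing like $n^{|S|}$, which is the whole source of the character exponentials $e^A$. Indeed, decomposing $\mathrm{Surj}(S,[n])$ into $\fS_{|S|}$-orbits gives $\ch(P_n) = \bigl(\sum_{m\ge 1} h_m\bigr)^n$, which is a signed binomial combination of $e^{(1^j)} = \bigl(\sum_{m\ge 0} h_m\bigr)^j$ for $j\le n$, not a single ``factor $e^A$ with $|A|=n$.'' Second, the claim ``passing to a submodule can only decrease the degree of generation'' is false; noetherianity gives finite generation but no degree bound, and the theorem's hypothesis is phrased as ``subquotient of a module generated in degrees $\le d$'' precisely because $M$ itself may require generators in higher degrees. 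Third, the containment in $\cF_{\le k}$ is not free: you need a uniform bound on the row-lengths of the Specht modules appearing in $M$, and extracting that from a projective presentation requires the concrete orbit analysis above, not a heuristic about ``bounded size per graded piece.''
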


Fix numbers $r$ and $k \geq r$, and a partition $A$. We write $a_i := m_i(A)$ for the multiplicity of $i$ in $A$. Neither $\cF_{\leq k}$ nor $\cV_{A,r}$ is finite dimensional, but their intersection is. This is what gives the above theorem its power. We now discuss the structure of this intersection in more detail. For this, we use the following elements (which depend on our choice of $A$).    

\begin{definition}
  For $n \in \bN$, define $u_n := p_n - \sum_{i|n} a_i$. For a partition $\lambda$, let 
  \[
    {u_\lambda} := \prod_{i \geq 1}  {u_i^{m_i(\lambda)}}. \qedhere
  \]
  We also define
  \[
  	\lambda! :=\prod_{i \geq 1} m_i(\lambda)!~ \qquad z_\lambda := \lambda!\prod_{i \geq1} {i^{m_i(\lambda)} }. 
  \]
\end{definition}

The elements $u_\lambda$ are important because of the following identity. Recall that the Hall inner product is the unique bilinear form $\langle\ , \ \rangle \colon \Lambda \otimes \widehat \Lambda  \to \bQ$ satisfying $\langle s_\lambda, s_\mu \rangle = \delta_{\lambda,\mu}$ for all partitions $\lambda, \mu$.

\begin{proposition}[{\cite[Lemma 10.10]{To}}]
 For $s \in \cV_{A,r} \cap \cF_{\leq k}$,  we have that
 \addtocounter{equation}{-1}
 \begin{subequations}
 \begin{equation}\label{expansion}
s = \sum_{\lambda} \left\langle \frac{u_\lambda}{z_\lambda}, s \right\rangle  p_\lambda  e^A, 
\end{equation}
\end{subequations}
where the sum is over all partitions $\lambda$.
\end{proposition}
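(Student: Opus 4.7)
The natural strategy is a dual-basis argument: establish that the family $\{u_\lambda / z_\lambda\}$ is the dual system to $\{p_\mu e^A\}$ under the Hall pairing, and conclude by writing $s$ in its canonical $\{p_\mu e^A\}$ expansion and extracting coefficients.

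First I would use adjointness under the Hall pairing to compute the key pairing $\langle u_\lambda/z_\lambda,\, p_\mu e^A\rangle$. Since $p_n^\perp = n\,\partial/\partial p_n$ on $\bQ[p_1,p_2,\ldots]$, we get $u_n^\perp = n\,\partial/\partial p_n - b_n$ with $b_n := \sum_{j\mid n} a_j$, and hence
\[
u_\lambda^\perp \;=\; \prod_i \bigl(i\,\partial/\partial p_i - b_i\bigr)^{m_i(\lambda)}.
\]
Because $e^A = \exp\bigl(\sum_d c_d p_d\bigr)$ with $c_d = (1/d)\sum_{i\mid d} i a_i$, the key eigenvalue relation $\partial e^A/\partial p_d = c_d e^A$ allows us to reduce $\langle u_\lambda, p_\mu e^A\rangle = \langle 1,\, u_\lambda^\perp(p_\mu e^A)\rangle$ via the Leibniz rule to a combinatorial formula in the $c_d$'s and $b_d$'s. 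This is the core computational input.

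Second, by the definition of $\cV_{A,r}$, any $s$ in the intersection has a unique representation $s = \sum_{\ell(\mu)<r} c_\mu\, p_\mu e^A$, and the claim to prove becomes $c_\lambda = \langle u_\lambda/z_\lambda,\, s\rangle$. Invoking linearity in $s$ reduces this to the duality statement from Step 1, restricted to the relevant range of $\lambda$ and $\mu$. The interaction with $\cF_{\leq k}$ comes in because the pairings $\langle u_\lambda/z_\lambda,\, s\rangle$ are finite exactly when $s$ has bounded Schur length, and the intersection with $\cF_{\leq k}$ is what ensures the infinite formal sum on the right-hand side is a well-defined element of $\widehat\Lambda$.

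I expect the main obstacle to lie in reconciling the infinite sum over \emph{all} partitions $\lambda$ with the finite-dimensionality of $\cV_{A,r}\cap\cF_{\leq k}$: the contributions from $\lambda$ with $\ell(\lambda)\geq r$ must be shown to vanish (or conspire to cancel) precisely because of the joint constraint of being in both $\cV_{A,r}$ and $\cF_{\leq k}$. Making this cancellation explicit — in effect, showing that the matrix $M_{\lambda\mu} = \langle u_\lambda/z_\lambda,\, p_\mu e^A\rangle$, restricted to the subspace of $\{c_\mu\}_{\ell(\mu)<r}$ sequences for which $\sum c_\mu p_\mu e^A \in \cF_{\leq k}$, behaves as the identity — is the technical heart of the lemma and is where the interplay between the two completion conditions becomes indispensable.
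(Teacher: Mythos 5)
Your dual-basis strategy is the right engine, and the adjointness calculus you set up — $p_n^\perp = n\,\partial/\partial p_n$ together with the eigenvector relation $\partial e^A/\partial p_d = c_d\,e^A$, $c_d = \tfrac{1}{d}\sum_{i\mid d} i\,a_i$ — is exactly what is needed. But the proposal stops at ``a combinatorial formula in the $c_d$'s and $b_d$'s'' without checking that this formula actually produces $\delta_{\lambda\mu}$, and that is where the proof lives or dies. Carrying it out: iterated Leibniz gives $u_n^\perp(f\,e^A) = \bigl(n\,\partial f/\partial p_n + (n\,c_n - b_n)f\bigr)e^A$, so the clean reduction $u_\lambda^\perp(f\,e^A) = (p_\lambda^\perp f)\,e^A$ — which then yields $\langle u_\lambda, p_\mu e^A\rangle = \langle 1,\,(p_\lambda^\perp p_\mu)\,e^A\rangle = z_\lambda\,\delta_{\lambda\mu}$, since $p_\lambda^\perp p_\mu$ is zero or of positive degree unless $\lambda=\mu$ — requires the annihilation $u_n^\perp e^A = 0$, i.e.\ $b_n = n\,c_n = \sum_{i\mid n} i\,a_i$. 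As displayed, $u_n = p_n - \sum_{i\mid n} a_i$ has no factor of $i$, so $b_n \ne n\,c_n$ in general, the annihilation fails, and so does the duality (for instance $\langle u_2/z_2,\,e^A\rangle = a_2/2 \ne 0$ once $a_2 \ne 0$). A blind proof that never surfaces this discrepancy has a real gap: you should either establish $u_n^\perp e^A = 0$ from the definitions as stated, or flag that your argument is only consistent with $u_n := p_n - \sum_{i\mid n} i\,a_i$.

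You have also misidentified the ``technical heart.'' The Hall pairing $\langle f,s\rangle$ for $f\in\Lambda$ of finite degree against any $s\in\widehat\Lambda$ is always a well-defined scalar — only the degree-$(\deg f)$ component of $s$ contributes — so no boundedness of Schur length is needed for finiteness, and the right-hand side of \eqref{expansion} converges in $\widehat\Lambda$ for any choice of coefficients, since only finitely many $p_\lambda e^A$ contribute to any fixed degree. Nor is there any cancellation to orchestrate among the $\ell(\lambda)\ge r$ terms: once $\langle u_\lambda/z_\lambda, p_\mu e^A\rangle = \delta_{\lambda\mu}$ is in hand, any $s = \sum_{\ell(\mu)<r} c_\mu\,p_\mu\,e^A$ satisfies $\langle u_\lambda/z_\lambda, s\rangle = c_\lambda$ for $\ell(\lambda)<r$ and $=0$ for $\ell(\lambda)\ge r$ outright (each $\mu$ in the sum then differs from $\lambda$), with the interchange of sum and pairing justified degree by degree. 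The identity \eqref{expansion} therefore holds for every $s\in\cV_{A,r}$; the intersection with $\cF_{\le k}$ plays no role in this lemma and matters only downstream, for finite-dimensionality of $\cV_{A,r}\cap\cF_{\le k}$ and the ring $R_{r,k,A}$.
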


To compute the inner products $\langle u_\lambda,s\rangle$, we use the following ring. 

\begin{definition}
  Let $R_{r,k, A}$ be the ring $$R_{r,k,A} := \frac{\Lambda}{(u_1, u_2, \dots)^r + (e_{k+1}, e_{k+2}, \dots) },$$ where $e_k$ is the elementary symmetric function.
\end{definition}

In \cite[Proposition 10.11]{To}, it is shown that for  $s \in \cV_{A,r} \cap \cF_{\leq k}$, the value of $\langle f,s\rangle$  only depends on the image of $f$ in $R_{r,k,A}$. In fact, the Hall inner product induces an isomorphism $\cV_{A,r} \cap \cF_{\leq k} \cong (R_{r,k,A})^*$. There is a collection of elements $E_m \in \widehat{\Lambda}$, $m \geq 1$ \cite[Definition 11.7]{To} such that the canonical projection induces an isomorphism
\[
  \bQ[E_1, \dots, E_k]/(E_1, \dots, E_k)^r \cong R_{r,k,A},
\]
see \cite[Proposition 11.11]{To}.   For a partition $\nu$ we define $E_\nu = \prod_{i} E_i^{m_i(\nu)}.$   Then the images of $\{E_\nu\}_{\nu \in {\rm Part}(r,k)}$ in $R_{r,k,A}$ form a basis of the maximal ideal  $(E_1, \dots, E_k) \subseteq R_{r,k,A}$.  Here ${\rm Part}(r,k)$ is the set of non-empty partitions whose Young diagrams have $\leq k$ columns and $< r$ rows (we emphasize here that the first inequality is weak and the second is strict). 

\subsection{Proof of Theorem~\ref{thm:FSopRationality}}

We now have the necessary tools to prove the theorem. By Theorem~\ref{thm:tosteson}, Theorem~\ref{thm:FSopRationality} follows from the following theorem:

\begin{theorem} \label{componentwiserationality}
Let $r$ and $k$ be non-negative integers, let $A$ be a partition, and let $s$ be an element of $\cV_{A,r} \cap \cF_{\leq k}$. Then $\pi_n(s)$ is a rational function of the $x_i$'s whose denominator is a product of factors of the form $1-x_i^m$ with $m \le \vert A \vert$.
\end{theorem}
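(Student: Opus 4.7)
I would proceed by showing that the prefactor $e^A$ already carries the required rational structure, and then reducing the theorem to a polynomiality statement about the remaining factor. First, compute
\[
\pi_n(e^A) \;=\; \prod_{\alpha=1}^n \prod_{i\ge 1}(1-x_\alpha^i)^{-m_i(A)}.
\]
Only those $i$ with $m_i(A)>0$ contribute, and each such $i$ is a part of $A$, hence $i\le |A|$. Thus $\pi_n(e^A)$ is a rational function whose denominator has exactly the form asserted in the theorem. The task reduces to writing $\pi_n(s)=\pi_n(g)\cdot \pi_n(e^A)$ with $\pi_n(g)\in\bQ[x_1,\dots,x_n]$.

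Next, I would invoke finite-dimensionality: by \cite[Proposition 10.11]{To} the Hall pairing identifies $\cV_{A,r}\cap\cF_{\le k}$ with $(R_{r,k,A})^*$, and $R_{r,k,A}\cong\bQ[E_1,\dots,E_k]/(E_1,\dots,E_k)^r$ is finite-dimensional. Because $\pi_n$ is $\bQ$-linear and the set of rational functions with the prescribed denominator form is a $\bQ$-subspace of $\widehat\Lambda_n$, it suffices to check the conclusion on a spanning set. I would choose the basis $\{F_\nu\}_{\nu\in\mathrm{Part}(r,k)\cup\{\emptyset\}}$ of $\cV_{A,r}\cap\cF_{\le k}$ dual to the basis $\{E_\nu\}$ of $R_{r,k,A}$. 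By the expansion formula \eqref{expansion},
\[
F_\nu \;=\; g_\nu\cdot e^A, \qquad g_\nu \;=\; \sum_\lambda \frac{\alpha_{\lambda,\nu}}{z_\lambda}\,p_\lambda,
\]
summed over partitions $\lambda$ with $\ell(\lambda)<r$, where the scalars $\alpha_{\lambda,\nu}$ are read off from the expansion $\overline{u_\lambda}=\sum_\nu \alpha_{\lambda,\nu}E_\nu$ in $R_{r,k,A}$.

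The central claim is then that for each $\nu$ the a priori formal sum $\pi_n(g_\nu)\in\widehat\Lambda_n$ is actually a polynomial in $x_1,\dots,x_n$. Granting this, $\pi_n(F_\nu)=\pi_n(g_\nu)\cdot\pi_n(e^A)$ has the required form, and the theorem follows by linearity.

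The main obstacle is establishing this polynomiality. The mechanism underlying the collapse from a formal series to a polynomial should combine three ingredients: (i) the finite-dimensionality of $R_{r,k,A}$, which forces strong structure on the family $\{\alpha_{\lambda,\nu}\}$ as $\lambda$ varies; (ii) the explicit description of the $E_m$ and the relations they satisfy, from \cite[Definition 11.7, Proposition 11.11]{To}, which control how these coefficients depend on $\lambda$; and (iii) the fact that after specializing $x_i\mapsto 0$ for $i>n$, each $\pi_n(p_i)$ with $i>n$ is already a polynomial in $\pi_n(p_1),\dots,\pi_n(p_n)$ via Newton's identities, so that only finitely many ``independent'' monomials in the $\pi_n(p_i)$'s can actually contribute. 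Making this rigorous requires a careful bookkeeping argument in $\widehat\Lambda$, paralleling the analysis of the Hall pairing on $R_{r,k,A}$ carried out in \cite{To}, and is where the bulk of the work lies.
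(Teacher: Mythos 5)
Your setup --- peeling off $\pi_n(e^A)$ and noting its denominator uses only $i$ with $m_i(A)>0$, then reducing by linearity to a dual basis of $R_{r,k,A}$ --- is sound and runs parallel to the paper. But the central claim you then defer, that $\pi_n(g_\nu)$ is a \emph{polynomial}, is false. Take $A=(1)$ and $r=2$, so $a_1=1$. From \cite[Equation 11.9]{To} one computes $\overline{u_m}=m\sum_{j\le\min(m,k)}E_j$ in $R_{2,k,(1)}$, so for the dual basis element $F_{(j)}$ (with $j\le k$) one gets
\[
g_{(j)}=\sum_{m\ge j}\frac{m}{z_{(m)}}\,p_m=\sum_{m\ge j}p_m,
\qquad
\pi_n(g_{(j)})=\sum_{\alpha=1}^n\frac{x_\alpha^{\,j}}{1-x_\alpha},
\]
which is not a polynomial; and $\pi_n(F_{(j)})=\pi_n(g_{(j)})\cdot\pi_n(e^A)$ therefore has $(1-x_\alpha)^2$ in its denominator, strictly worse than what $\pi_n(e^A)$ alone accounts for. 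So the factorization you propose genuinely loses part of the denominator.

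What actually makes the theorem true is the weaker statement that $\pi_n(g_\nu)$ is a rational function whose denominator is a product of the \emph{same} factors $1-x_\alpha^i$ (those $i$ with $m_i(A)>0$, hence $i\le|A|$), raised to powers bounded by $a_i(r-1)$. The paper gets this by applying the substitution $t^m\mapsto p_m$ to the generating-function identity \cite[Equation 11.9]{To}, exponentiating, and using nilpotence of the $E_\nu$ in $R_{r,k,A}$ to truncate the exponential to a finite sum; the coefficient of each $E_\nu$ is then an explicit finite product of terms $\sum_\alpha x_\alpha^{m_j}/\prod_i(1-x_\alpha^i)^{a_i r_j}$. Your three ingredients do not supply this: finite-dimensionality of $R_{r,k,A}$ by itself says nothing about how $\alpha_{\lambda,\nu}$ grows with $\lambda$, and Newton's identities let you rewrite high $p_m$'s in terms of low ones but cannot make an \emph{infinite} series of them collapse to a polynomial (as the example above shows, it collapses only to a rational function). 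The essential missing input is the closed-form identity \cite[Equation 11.9]{To}, which your sketch never invokes.
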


\begin{proof}
By \cite[Equation 11.9]{To}, we have the following identity in $R_{r,k,A}[\![t]\!]$:
\begin{equation*}
\sum_{n \geq 1} \frac{u_n}{n} t^n =  \sum_{\nu \in {\rm Part}(r,k) } E_\nu ~t^{|\nu|} \frac{   (\ell(\nu) -  1)!}{\nu! \left (\prod _{i} (1 - t^i)^{a_i} \right)^{\ell(\nu)}}.
\end{equation*}  
We apply the linear transformation $R_{r,k,A}[\![t]\!] \to  R_{r,k,A} \otimes \widehat{\Lambda}$ induced by $t^n \mapsto p_n$ to this identity. Notice that under this transformation we have
$$
\frac{t^{|\nu|}}{\prod_{i}(1 -t^i)^{a_i \ell(\nu)}} \mapsto \sum_{\alpha}  \frac{x_\alpha^{|\nu|}}{\prod_{i}(1 -x_\alpha^i)^{a_i \ell(\nu)}}.
$$
Taking exponentials, we obtain the identity 
\begin{equation*}
  \sum_{\lambda} \frac{p_\lambda u_\lambda}{z_\lambda}  = \exp\left( \sum_{n} \frac{u_n p_n}{n} \right) = \exp \left(  \sum_{\nu \in {\rm Part}(r,k)}  \frac{E_\nu}{\nu!} \sum_{\alpha}  \frac{x_\alpha^{|\nu|} (\ell(\nu) - 1)!}{\prod_{i}(1 -x_\alpha^i)^{a_i \ell(\nu)}} \right)  
\end{equation*}
in $R_{r,k,A} \otimes \widehat{\Lambda}$.  Each $E_\nu$ is nilpotent of order $r$ \cite[Proposition 11.11]{To}.  So expanding out the right hand side exponential,  we obtain a sum over $\nu \in {\rm Part}(r,k)$  of $E_\nu$ times a coefficient in $\widehat{\Lambda}$. The coefficient of $E_\nu$ is a finite linear combination of functions of the form
$$
\prod_{j =1}^N  \left( \sum_{\alpha} \frac{  x_\alpha^{|\nu_j|}}{\prod_{i}(1 -x_\alpha^i)^{a_i \ell(\nu_j)}}\right)
$$
where $\nu = \sum_{j = 1}^N \nu_j$ in the sense that $m_i(\nu) = \sum_{j = 1}^N m_i(\nu_j)$ for every positive integer $i$. Finally, we apply the linear transformation
\[
  \langle -, s \rangle e^A \colon   R_{r,k,A} \otimes \widehat \Lambda \to \widehat \Lambda, \qquad f \otimes g \mapsto \langle f, s \rangle ge^A.
\]
We see from \eqref{expansion} that $s = \langle \sum_{\lambda} \frac{p_\lambda u_\lambda}{z_\lambda}, s \rangle e^A$   is a linear combination of symmetric functions of the form
\[
  \frac{1}{\prod_{\alpha} \prod_{i} (1-x_\alpha^i)^{a_i}}\prod_{j = 1}^N  \sum_{\alpha} \frac{x_{\alpha}^{m_j}}{\prod_{i}( 1-x_\alpha^i)^{a_i r_j}},
\]
where $\sum r_j < r$  and $\sum_{j} m_j \leq r k$.  
  In particular $s$ specializes to a rational function with denominator $\prod_{\alpha = 1}^d  \prod_{i}  (1 - x_\alpha^i)^{a_i r}.$   
\end{proof}

\end{document}